\documentclass[10pt]{elsarticle}
\usepackage[cp1251]{inputenc}
\usepackage[english]{babel}
\usepackage{amsmath}
\usepackage{amssymb}
\usepackage{amsfonts}
\usepackage{graphicx}
\usepackage{floatflt,epsfig}
\usepackage{lineno,hyperref}
\usepackage{enumerate}
\usepackage{colortbl}
\usepackage{array,tabularx,tabulary,booktabs}
\usepackage{longtable}
\usepackage{multirow}
\usepackage{wrapfig}
\usepackage{subcaption}
\usepackage[table]{xcolor}
\newcolumntype{^}{>{\currentrowstyle}}

\journal{European Journal of Combinatorics}
\setcounter{page}{1}
\newtheorem{lemma}{Lemma}

\newtheorem{theorem}{Theorem}
\newtheorem{corollary}{Corollary}
\newtheorem{proposition}{Proposition}
\newtheorem{problem}{Problem}
\newtheorem{example}{Example}
\newcommand{\proof}{\medskip\noindent{\bf Proof.~}}
\bibliographystyle{elsarticle-num}

\begin{document}
\renewcommand{\abstractname}{Abstract}
\renewcommand{\refname}{References}
\renewcommand{\tablename}{Figure.}
\renewcommand{\arraystretch}{0.9}
\thispagestyle{empty}
\sloppy

\begin{frontmatter}
\title{Deza graphs: a survey and new results}

\author[01]{Sergey Goryainov}
\ead{sergey.goryainov3@gmail.com}

\author[01,02]{Leonid~V.~Shalaginov}
\ead{44sh@mail.ru}

\address[01] {Chelyabinsk State University, Brat'ev Kashirinyh st. 129\\Chelyabinsk  454021, Russia}
\address[02]{Krasovskii Institute of Mathematics and Mechanics, S. Kovalevskaja st. 16 \\ Yekaterinburg
620990, Russia}


\begin{abstract}
In this paper we survey existing results on Deza graphs and give some new results. We present an introduction to Deza  graphs  for  the  reader  who  is  unfamiliar  with  the  subject,  and  then  give an overview of some developments in the area of Deza graphs since the initial paper by five authors [M. Erickson, S. Fernando, W. H. Haemers, D. Hardy, J. Hemmeter, Deza graphs: \emph{A generalization of strongly regular graphs}, J. Comb. Designs. 7 (1999), 395--405.] was written. We then investigate 3-class cyclotomic schemes and give necessary and sufficient conditions to get a Deza graph as a graph given by one relation or the union of two relations. Finally, we prove that a strictly Deza circulant on $2p$ vertices, where $p$ is prime, is isomorphic to the lexicographical product of the Paley graph on $p$ vertices with an edge.
\end{abstract}

\begin{keyword}
Deza graph; strongly regular graph;
\vspace{\baselineskip}
\MSC[2010] 05C25\sep 05E10\sep 05E15
\end{keyword}
\end{frontmatter}
\tableofcontents
\section{Survey}
\subsection{Introduction}
Michel Deza was one of the founding editors of the
European Journal of Combinatorics, and had a very big influence on it in its early days. Deza graphs were introduced in 1999 in the initial paper \cite{EFHHH99}. The European Journal of Combinatorics celebrates its 40th birthday in 2020.

In this paper we survey existing results on Deza graphs and give some new results. We hope it would be an appropriate tribute. The paper is organised as follows. In Section 1 we survey existing results on Deza graphs.
In Section 2.1 we give a construction of strictly Deza graphs based on cyclotomic association schemes. In Section 2.2 we show that a strictly Deza circulant on $2p$ vertices, where $p$ is prime, is necessarily the lexicographical product of the Paley graph on $p$ vertices and an edge.

\subsection{Preliminaries}
Let $\Gamma = (V,E)$ be a graph.
For an edge $\{u, v\}$, the vertices $u$ and $v$ are said to be \emph{adjacent} to one another, which is denoted by $u \sim v$.
The length of a shortest path connecting vertices $u, v$ is called the \emph{distance} between
the vertices $u, v$ and denoted by $d(u,v)$.
The \emph{diameter} of a graph $\Gamma$ is the maximum distance between two vertices.

A subgraph $\Delta$ of a graph $\Gamma$ is called \emph{induced}, if for any two vertices
$u,v$ in $\Delta$, $u$ and $v$ are adjacent if and only if they are adjacent in $\Gamma$.

For any vertex $v$, define the \emph{neighbourhood} $N(v)$ and
the \emph{second neighbourhood} $N_2(v)$ as the subgraphs induced by the sets $\{u~|~u \sim v\}$ and $\{u~|~d(v,u) = 2\}$, respectively.
For any two vertices $u,v$
define the \emph{common neighbourhood} $N(u,v)$
of the vertices $u,v$ as the subgraph induced by the set $N(u) \cap N(v)$.

A graph is called \emph{regular of valency} $k$ if each its vertex has exactly $k$ neighbours.
A graph is called \emph{edge-regular} with parameters $(n,k,\lambda)$ if it has $n$ vertices, is regular of valency $k$ and
for any pair of adjacent vertices  $u,v$ the equality $|N(u,v)| = \lambda$ holds.
A graph is called \emph{coedge-regular} with parameters $(n,k,\mu)$ if it has $n$ vertices, is regular of valency $k$ and
for any pair of distinct non-adjacent vertices  $u,v$ the equality $|N(u,v)| = \mu$ holds.
A graph is called \emph{strongly regular} with parameters $(n,k,\lambda,\mu)$
if it is edge-regular with parameters $(n,k,\lambda)$ and coedge-regular with parameters $(n,k,\mu)$.
An edge-regular graph with parameters $(n,k,\lambda)$ is called \emph{quasi-strongly regular} with parameters $(n,k,\lambda;\mu_1,\mu_2)$ (see \cite{G06})
if for any pair of distinct non-adjacent vertices $u, v$, the number $|N(u,v)|$ takes precisely two values $\mu_1$ or $\mu_2$.

Let $n$, $k$, $b$, $a$ be integers such that the condition $0 \le a \le b \le k < n$ holds.
A graph $\Gamma$ is called a \emph{Deza graph} with parameters $(n,k,b,a)$ if it has $n$ vertices, is regular of valency $k$ and for any pair
of distinct vertices $u, v$ the number $|N(u,v)|$ takes precisely two values $a$ or $b$ (here we mean
that $\Gamma$ contains a pair of vertices having $a$ common neighbours and a pair of vertices
having $b$ common neighbours).
A Deza graph is a \emph{strictly Deza graph} if it has diameter $2$ and is not strongly regular (see \cite{EFHHH99}).
Note, that for a strictly Deza graph with parameters $(n, k, b, a)$ the inequality $a < b$ holds.

The class of Deza graphs thus generalises the class of strongly regular graphs. The classes of edge-regular strictly Deza graphs and coedge-regular strictly Deza graphs are important special cases of strictly Deza graphs.

Further, we give several more definitions related to Deza graphs.

A $k$-regular graph is a \emph{divisible design graph} if the vertex set can be
partitioned into $m$ classes of size $n$ such that two distinct vertices from the same class have exactly
$\lambda_1$ common neighbors, and two vertices from different classes have exactly $\lambda_2$ common neighbors.
Divisible design graphs form a special class of Deza graphs. This class of graphs was introduced in \cite{HKM11}. Walk-regular divisible design graphs were studied in \cite{CH14}. An infinite family of divisible design graphs was constructed in \cite{KS20}.

A connected loopless graph $\Delta$ is called a \emph{$(0,\lambda)$-graph} if any two distinct vertices in $\Delta$ have $\lambda$ common neighbours or none at all \cite{M79}. The class of Deza graphs with parameters $(n,k,b,0)$ coincide with the class of $(0,\lambda)$-graphs.
Deza graphs of diameter greater than $2$ have parameters $(n,k,b,0)$ and belong to the class of $(0,\lambda)$-graphs.

\subsection{Comparative analysis of properties}
Since Deza graphs were introduced as a generalisation of strongly regular graphs, in this section we consider several properties of strongly regular graphs and discuss if these properties hold for Deza graphs in general.

\subsubsection{Parameters $\alpha$ and $\beta$ for a Deza graph}
Let $\Gamma$ be a connected strongly regular graph with parameters $(n,k,\lambda,\mu)$ and $x$ be a vertex in $\Gamma$. Then the sets of vertices that have $\lambda$ common neighbours with $x$ (so called \emph{$\lambda$-vertices} for $x$) and $\mu$ common neighbours with $x$ (so called \emph{$\mu$-vertices} for $x$) are the first neighbourhood $N_1(x)$ and the second one $N_2(x)$, respectively. There is a generalisation of this for Deza graphs. Let $\Delta$ be a Deza graph with parameters $(n,k,b,a)$. For a vertex $x$, in a similar way, we may introduce so called \emph{$a$-vertices} and \emph{$b$-vertices} (as having $a$ and $b$ common neighbours with $x$, respectively).

\begin{proposition}[{[1, Proposition 1.1]}]
    For a Deza graph with parameters $(n,k,b,a)$, the numbers of $a$-vertices and $b$-vertices do not depend on the choice of the vertex $x$ and, for the case $b > a$, can be computed as follows:
    $$
    \alpha := \frac{b(n-1)-k(k-1)}{b-a},~~
    \beta := \frac{k(k-1) - a(n-1)}{b-a}.
    $$
\end{proposition}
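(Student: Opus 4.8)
The plan is to fix an arbitrary vertex $x$ and extract two linear relations between the number $\alpha(x)$ of $a$-vertices for $x$ and the number $\beta(x)$ of $b$-vertices for $x$. The first relation is immediate from the definition of a Deza graph: the quantity $|N(x,y)|$ takes precisely the two values $a$ and $b$ as $y$ ranges over the vertices distinct from $x$, so each such $y$ is either an $a$-vertex or a $b$-vertex for $x$, and therefore $\alpha(x) + \beta(x) = n - 1$.

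The second relation comes from double counting the sum $S := \sum_{y \ne x} |N(x) \cap N(y)|$, equivalently the number of ordered pairs $(w,y)$ with $y \ne x$, $w \sim x$, and $w \sim y$. Grouping the pairs by $y$ gives $S = a\,\alpha(x) + b\,\beta(x)$, using that a pair $x,y$ contributes exactly $a$ or $b$. Grouping instead by $w$: each of the $k$ neighbours $w$ of $x$ contributes the number of its own neighbours other than $x$, which is $k-1$ by $k$-regularity, so $S = k(k-1)$. Hence $a\,\alpha(x) + b\,\beta(x) = k(k-1)$.

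It remains to solve the system $\alpha(x) + \beta(x) = n-1$, $a\,\alpha(x) + b\,\beta(x) = k(k-1)$. When $b > a$ the coefficient matrix is nonsingular, so the system has the unique solution $\alpha(x) = \bigl(b(n-1) - k(k-1)\bigr)/(b-a)$ and $\beta(x) = \bigl(k(k-1) - a(n-1)\bigr)/(b-a)$. Since this solution does not depend on $x$, both that the counts are vertex-independent and the stated closed forms follow at once.

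The argument is entirely routine and I do not expect any real obstacle; the only place deserving a word of care is the identity $S = k(k-1)$, where one should note that for a fixed neighbour $w$ of $x$ exactly one of its $k$ neighbours (namely $x$ itself) is excluded, and that no pair $(w,y)$ is counted twice because $w$ ranges over the fixed set $N(x)$.
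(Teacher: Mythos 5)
Your proof is correct, and it is the standard argument: the two linear relations $\alpha(x)+\beta(x)=n-1$ and $a\,\alpha(x)+b\,\beta(x)=k(k-1)$ (the latter by double counting paths of length two from $x$) pin down $\alpha$ and $\beta$ uniquely when $b>a$. The survey itself gives no proof, citing [1, Proposition 1.1], and your argument is essentially the one in that original source, so there is nothing to add.
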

For a vertex $x$, $N(x)$ and $N_2(x)$ may contain $a$-vertices as well as $b$-vertices.

\subsubsection{Complements of Deza graphs}
For a strongly regular graph, its complement is known to be strongly regular. But the same does not hold for Deza graphs in general.

Let $\Delta$ be a Deza graph with parameters $(n,k,b,a)$ that is not strongly regular. Then the following four situations are possible.

\begin{enumerate}
    \item There exist two non-adjacent vertices that have $a$ common neighbours.
    \item There exist two non-adjacent vertices that have $b$ common neighbours.
    \item There exist two adjacent vertices that have $a$ common neighbours.
    \item There exist two adjacent vertices that have $b$ common neighbours.
\end{enumerate}
\begin{proposition}
    Let $\Delta$ be a Deza graph  with parameters $(n,k,b,a)$ that is not strongly regular.
    The complement $\overline{\Delta}$ is a Deza graph if and only if $b = a + 2$ and either the situation 1 or the situation 4 does not hold.
\end{proposition}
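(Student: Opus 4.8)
The plan is to follow how a common neighbourhood transforms under complementation, translate the condition ``$\overline{\Delta}$ is a Deza graph'' into a statement about the number of distinct values of a common-neighbourhood count, and then read the answer off from the four listed situations. First I would record the transformation formula. For a $k$-regular graph $\Delta$ on $n$ vertices and distinct vertices $u,v$, a vertex $w\notin\{u,v\}$ is a common neighbour of $u$ and $v$ in $\overline{\Delta}$ precisely when it is adjacent in $\Delta$ to neither $u$ nor $v$; counting such $w$ by inclusion--exclusion over the $n-2$ vertices other than $u,v$, and distinguishing whether $u\sim v$ or $u\not\sim v$ in $\Delta$, one obtains, with $s:=n-2k$,
$$
|N_{\overline{\Delta}}(u,v)|=
\begin{cases}
s+|N_{\Delta}(u,v)|, & \text{if } u\sim v\text{ in }\Delta,\\
s-2+|N_{\Delta}(u,v)|, & \text{if } u\not\sim v\text{ in }\Delta.
\end{cases}
$$

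Since $\overline{\Delta}$ is regular of valency $n-1-k$, it is a Deza graph exactly when the set of numbers $|N_{\overline{\Delta}}(u,v)|$, ranging over all distinct pairs $u,v$, has at most two elements (should it have a single element, $\overline{\Delta}$ is strongly regular, hence still a Deza graph). By the formula above, $|N_{\Delta}(u,v)|\in\{a,b\}$ turns into one of four numbers: $s-2+a$, which occurs iff situation~1 holds; $s-2+b$, iff situation~2 holds; $s+a$, iff situation~3 holds; and $s+b$, iff situation~4 holds. So everything reduces to deciding when at most two of these (occurring) numbers are distinct. Note also that $0\le a<b$ here, since a Deza graph in which all pairs share the same number of common neighbours is strongly regular.

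Next I would use that $\Delta$ is not strongly regular, so $\Delta$ is not edge-regular or not coedge-regular. In the first case $\Delta$ has two adjacent pairs with different common-neighbour counts, so situations~3 and~4 both hold, and since then $\Delta\ne K_n$ it also has a non-edge, whence situation~1 or~2 holds; in the second case, symmetrically, situations~1 and~2 hold, and since then $\Delta\ne\overline{K_n}$ it has an edge, whence situation~3 or~4 holds. Either way at least three of the four situations hold, so at least three of $s-2+a,\,s-2+b,\,s+a,\,s+b$ genuinely occur, and $\overline{\Delta}$ can be a Deza graph only if two of these four numbers coincide. As $0\le a<b$, the only possible coincidence is $s+a=s-2+b$, i.e. $b=a+2$; this is the key point. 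Granting $b=a+2$, the four candidates become the three numbers $s+a-2<s+a<s+a+2$, where $s+a-2$ occurs iff situation~1 holds, $s+a+2$ occurs iff situation~4 holds, and $s+a$ always occurs (it comes from situation~2 or situation~3, at least one of which holds). Hence the value set has at most two elements iff situations~1 and~4 do not both hold, which is exactly the claimed condition; the converse implication is obtained by reading this last argument backwards.

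I expect the main obstacle to be the bookkeeping in the third step: the hypothesis ``$\Delta$ is not strongly regular'' must be used precisely to guarantee that at least three of the four situations occur, for that is what makes the single numerical coincidence $b=a+2$ unavoidable, and one has to check carefully that no other pair among $s-2+a,\,s-2+b,\,s+a,\,s+b$ can collide when $0\le a<b$. The transformation formula and the backward direction are routine.
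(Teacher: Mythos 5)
Your proof is correct and follows essentially the same route as the paper: compute how the common-neighbour count transforms under complementation in each of the four situations, observe that $a<b$ forces the only possible coincidence $n-2k+b-2=n-2k+a$, i.e.\ $b=a+2$, and then rule out the simultaneous occurrence of situations~1 and~4. Your intermediate step --- using the failure of strong regularity to guarantee that at least three of the four situations actually occur, so that only the \emph{occurring} values matter --- makes explicit a point the paper's terser argument leaves implicit, but it is the same argument.
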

\begin{proof}
Let $u,v$ be two vertices in $\Delta$.
Then, for the number $|N_{\overline{\Delta}}(u,v)|$ of common neighbours of $u$ and $v$ in $\overline{\Delta}$, we have
$$
|N_{\overline{\Delta}}(u,v)| =
\begin{cases}
 n-2k+a-2, & \text{if situation 1 holds;}\\
 n-2k+b-2, & \text{if situation 2 holds;}\\
 n-2k+a, & \text{if situation 3 holds;}\\
 n-2k+b, & \text{if situation 4 holds.}
\end{cases}
$$
We require $|\{n-2k+a-2, n-2k+b-2, n-2k+a, n-2k+b\}| = 2$.
Since $\Delta$ is not strongly regular, the inequality $a < b$ holds. It implies $n-2k+b-2 = n-2k+a$ and, thus, $b = a+2$. Moreover, either situation 1 or situation 4 must not hold. $\square$
\end{proof}

\subsubsection{Matrix conditions and Deza children}
One can define Deza graphs in terms of matrices. Suppose $\Delta$ is a graph with $n$
vertices, and $M$ is its adjacency matrix. Then $\Delta$ is a Deza graph with parameters $(n, k, b, a)$ if and only if
$$M^2 = aA + bB + kI$$
for some (0,1)-matrices $A$ and $B$ such that $A + B + I = J$, the all ones matrix. Note
that $\Delta$ is a strongly regular graph if and only if $A$ or $B$ is $M$.
The matrices $A$ and $B$ regarded as adjacency matrices define graphs called the \emph{Deza children} of $\Delta$ and denoted by $\Delta_a$ and $\Delta_b$.
It is easy to see that $\Delta$ is a strongly regular graph if and only if $\Delta$ is equal to $\Delta_a$ or $\Delta_b$.

\subsubsection{Vertex connectivity}
For a connected, not complete graph, the \emph{vertex connectivity number} (\emph{vertex connectivity}) is the minimum number of vertices whose removal from the graph makes it disconnected.

In \cite{BM85}, it was proved by Brouwer and Mesner that the  vertex connectivity of a connected strongly regular graph is equal to its valency.
In \cite{BK09}, Brouwer and Koolen showed that the vertex connectivity of a connected distance-regular graph is equal to its valency.

In \cite{BM97}, it was shown that the vertex connectivity of a $(0, 2)$-graph equals its valency.
In \cite{GGK14}, the vertex connectivity of strictly Deza graphs obtained by dual Seidel switching from a strongly regular graph was studied. It was shown that, if we the original strongly regular graph has eigenvalue $r \notin \{1,2\}$, then the vertex connectivity of the resulting strictly Deza graph is equal to its valency. In \cite{GP19}, it was shown that there exists infinitely many strictly Deza graphs with vertex connectivity $k-1$ where $k$ is the valency. They are obtained from strongly regular graphs with $r = 1$ by dual Seidel switching. In particular, it was shown that there exists infinitely many vertex-transitive strictly Deza graphs with vertex connectivity $k-1$ where $k$ is the valency. It gives motivation to continue studying the vertex connectivity of Deza graphs.

\subsubsection{Spectra of Deza graphs}
Any non-trivial strongly regular graph with parameters $(n,k,\lambda,\mu)$ has exactly three eigenvalues:
the principal eigenvalue $k$ and the non-principal eigenvalues $r$ and $s$, which can be expressed
in terms of the parameters $n,k,\lambda,\mu$, and the inequalities $s < 0 < r < k$ hold. It is also well-known that a regular graph with exactly three distinct eigenvalues is strongly regular (see \cite[Lemma 10.2.1]{GR01}).

Spectra of Deza graphs were firstly studied in \cite{AGHKKS20}.
In general, strictly Deza graphs can have
unbounded number of eigenvalues. In fact, for an integer  $d \geqslant 3$, let $H(d,2)$ be the $d$-dimensional hypercube, which is an edge-regular $(0,2)$-graph with $d+1$ distinct eigenvalues. Then the complementary graph $\overline{H(d,2)}$ is a strictly Deza graph with parameters $(2^d, 2^d - d -1, 2^d - 2d, 2^d-2d -2)$ having the $d+1$ distinct eigenvalues $2^d-d-1$, $-1-(d-2i)$, where $1 \leqslant i \leqslant d$. On the other hand, for a divisible design graph its spectrum can be computed in terms of the parameters of the graph. In particular, a divisible design graph has at most five distinct eigenvalues (see \cite[Lemma 2.1]{HKM11}).
Also it was remarkably shown that the spectra of Deza children of a Deza graph $\Delta$ with parameters $(n,k,b,a)$ can be expressed in terms of $n,k,b,a$ and eigenvalues of $\Delta$.
\begin{theorem}[{\cite[Theorem 3.2]{AGHKKS20}}]
Let $\Delta$ be a Deza graph with parameters $(n,k,b,a)$. Let $M,A,B$ be the adjacency matrices of $\Delta$ and its children, respectively. If $\theta_1 = k, \theta_2, \ldots, \theta_n$ are the eigevalues of $M$, then the eigenvalues of $A$ are
$$\alpha=\frac{b(n-1)-k(k-1)}{b-a}, \frac{k-b-\theta_2^2}{b-a},\ldots,\frac{k-b-\theta_n^2}{b-a}$$
and the eigenvalues of $B$ are
$$\beta=\frac{a(n-1)-k(k-1)}{a-b}, \frac{k-a-\theta_2^2}{a-b},\ldots,\frac{k-a-\theta_n^2}{a-b}.$$
\end{theorem}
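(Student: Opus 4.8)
The plan is to argue purely by linear algebra from the matrix characterisation recalled above: $M^2 = aA + bB + kI$ together with $A + B + I = J$. Throughout one uses $a \ne b$, which holds for every graph to which the statement applies (if $a = b$ then $\Delta$ is strongly regular with $\lambda = \mu = a$ and the child decomposition degenerates). Eliminating $B$ via $B = J - I - A$ and substituting into the first identity gives $M^2 = (a-b)A + bJ + (k-b)I$, hence
\[
A = \frac{1}{a-b}\bigl(M^2 - bJ - (k-b)I\bigr), \qquad B = J - I - A .
\]
Thus $A$ and $B$ are explicit linear combinations of $M^2$, $J$ and $I$, and proving the theorem reduces to evaluating these combinations on a convenient common eigenbasis.

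Next I would use that $M$ commutes with $J$ and with $I$: regularity of $\Delta$ gives $MJ = JM = kJ$, and $I$ is central. Since $M$ is real symmetric it has an orthonormal eigenbasis, and since $M$ and $J$ commute this basis can be chosen to diagonalise $J$ as well; pick $v_1 = \tfrac{1}{\sqrt{n}}\,\mathbf{1}$ (so $Mv_1 = kv_1$, matching $\theta_1 = k$, and $Jv_1 = n v_1$) and $v_2,\dots,v_n$ spanning $\mathbf{1}^{\perp}$ with $Mv_i = \theta_i v_i$ and therefore $Jv_i = 0$. It is irrelevant whether $\Delta$ is connected: if $k$ appears again among $\theta_2,\dots,\theta_n$ nothing changes.

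Finally I would apply the two formulas to each basis vector. For $i \ge 2$ the $bJ$ term vanishes, so $Av_i = \frac{\theta_i^2 - (k-b)}{a-b}\,v_i = \frac{k-b-\theta_i^2}{b-a}\,v_i$ as claimed, and $Bv_i = \bigl(0 - 1 - \tfrac{k-b-\theta_i^2}{b-a}\bigr)v_i = \frac{k-a-\theta_i^2}{a-b}\,v_i$. For $i = 1$, $Av_1 = \frac{k^2 - bn - (k-b)}{a-b}\,v_1$, and a one-line rearrangement identifies the scalar with $\frac{b(n-1)-k(k-1)}{b-a} = \alpha$; likewise $Bv_1 = (n - 1 - \alpha)v_1 = \beta v_1$, either by direct simplification or by invoking $\alpha + \beta = n-1$ from Proposition 1.1. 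As $v_1,\dots,v_n$ is a basis, this lists all eigenvalues of $A$ and of $B$ with the correct multiplicities.

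There is essentially no hard step here; the crux is simply the observation that $B = J - I - A$ converts the problem into diagonalising a linear combination of $M^2$, $J$ and $I$. The only points deserving a careful sentence are the use of $a \ne b$, the simultaneous diagonalisation of $M$ and $J$ together with the remark that non-principal eigenvectors lie in $\mathbf{1}^{\perp}$ (so the $J$-term drops out for $i \ge 2$), and the two short scalar simplifications producing $\alpha$ and $\beta$.
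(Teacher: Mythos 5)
Your argument is correct and complete: the paper itself quotes this result from [AGHKKS20] without reproducing a proof, and your derivation --- eliminating $B$ via $B=J-I-A$, solving $M^2=(a-b)A+bJ+(k-b)I$ for $A$, and evaluating on a common eigenbasis of $M$ and $J$ split as $\mathbf{1}$ versus $\mathbf{1}^{\perp}$ --- is exactly the standard route taken in the cited source. The points you flag (the need for $a\ne b$, the vanishing of the $J$-term on non-principal eigenvectors, and $\alpha+\beta=n-1$) are indeed the only places requiring care, and you handle them correctly.
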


Some further discussion on the nullity of Deza graphs and spectra of divisible design graphs can be found in \cite{AGHKKS20}.

\subsection{Constructions of Deza graphs}
\subsubsection{Dual Seidel switching}
Permuting of rows (but not columns) of the adjacency matrix of a graph is called \emph{dual Seidel switching} (see \cite{H84}). In \cite{EFHHH99}, dual Seidel switching was adapted to produce strictly Deza graphs from strongly regular graphs.

\begin{theorem}[{\cite[Theorem 3.1]{EFHHH99}}]\label{DezaDSS}
Let $\Gamma$ be a strongly regular graph with parameters $(n,k,\lambda,\mu)$, $k \ne \mu$, $\lambda \ne \mu$ and adjacency matrix $M$. Let $P$ be a permutation matrix. Then $PM$ is the adjacency matrix of a Deza graph $\Delta$ if and only if $P = I$ or $P$ represents an involution of $\Gamma$ that interchanges only non-adjacent vertices. Moreover, $\Delta$ is strictly Deza if $P \ne I$, $\lambda \ne 0$ and $\mu \ne 0$.
\end{theorem}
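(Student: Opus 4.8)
The plan is to write $M$ for the adjacency matrix of $\Gamma$, $N=PM$ for the matrix under consideration, and to use the standard identity $M^{2}=kI+\lambda M+\mu(J-I-M)=(k-\mu)I+(\lambda-\mu)M+\mu J$ coming from strong regularity. Since $N$ is obtained from $M$ by permuting rows, it is automatically a $(0,1)$-matrix with all row sums equal to $k$; hence ``$N$ is the adjacency matrix of a graph'' is equivalent to ``$N$ is symmetric with zero diagonal''. Moreover, when that holds, symmetry gives $PM=(PM)^{T}=MP^{-1}$, so $N^{2}=(PM)(PM)=(MP^{-1})(PM)=M^{2}$, whose off-diagonal entries take precisely the two values $\lambda\ne\mu$ (both occurring: the degenerate possibilities $\Gamma=K_{n}$ or $\Gamma$ edgeless are excluded by $k\ne\mu$ and $\lambda\ne\mu$), so the resulting graph is automatically Deza. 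Thus the theorem reduces to proving: $N=PM$ is symmetric with zero diagonal if and only if $P=I$ or $P$ represents an involutory automorphism of $\Gamma$ that interchanges only non-adjacent vertices.

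One direction is immediate: if $P=I$ this is clear, and if $P$ represents such an involution, with underlying permutation $\pi$, then $P^{2}=I$ and $PM=MP$, so $N^{T}=MP^{T}=MP^{-1}=MP=PM=N$, while $N_{ii}=M_{\pi(i),i}=0$ since either $\pi(i)=i$ or $\pi(i)\not\sim i$. For the converse, assume $N=PM$ is symmetric with zero diagonal. Symmetry gives $PM=MP^{-1}$, hence also $P^{-1}M=MP$, and therefore $PM^{2}=PM\cdot M=(MP^{-1})M=M(P^{-1}M)=M(MP)=M^{2}P$; that is, $P$ commutes with $M^{2}$. As $P$ commutes with $I$ and with $J$, the identity above forces $(\lambda-\mu)(PM-MP)=0$, and $\lambda\ne\mu$ yields $PM=MP$: thus $P$ represents an automorphism of $\Gamma$. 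Combining $PM=MP$ with $PM=MP^{-1}$ gives $MP^{2}=M$, which says that each vertex $v$ and $\pi^{2}(v)$ have the same neighbourhood in $\Gamma$; but two distinct vertices of a strongly regular graph cannot share a neighbourhood (they would be non-adjacent with $\mu=k$ common neighbours, contradicting $k\ne\mu$), so $P^{2}=I$. Finally the zero-diagonal condition $M_{\pi(i),i}=0$ says precisely that every transposition of $\pi$ swaps two non-adjacent vertices. The crux of the argument is this first step of the converse --- upgrading the bare symmetry of $PM$ to commutation of $P$ with $M$, via $M^{2}$; the rest is bookkeeping together with the elementary ``no repeated neighbourhoods'' fact for strongly regular graphs.

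For the ``moreover'' clause, assume in addition $P\ne I$, $\lambda\ne 0$, $\mu\ne 0$. Since $\lambda,\mu\ge 1$, every off-diagonal entry of $N^{2}=M^{2}$ is positive, so any two distinct vertices of $\Delta$ have a common neighbour; hence $\Delta$ is connected of diameter at most $2$, and because $P\ne I$ interchanges some non-adjacent pair, $\Gamma$ is not complete, so $\Delta$ is $k$-regular with $k\le n-2$ and its diameter is exactly $2$. It remains to see that $\Delta$ is not strongly regular. From $N^{2}=kI+\lambda M+\mu(J-I-M)$ the Deza children of $\Delta$ are $M$ and $J-I-M$, and (as noted in the excerpt) $\Delta$ is strongly regular exactly when its adjacency matrix $N$ coincides with one of these. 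If $N=M$, then $PM=M$ forces each $v$ and $\pi(v)$ to have the same neighbourhood, hence $\pi=\mathrm{id}$ and $P=I$, a contradiction. If $N=J-I-M$, then applying $P$ on the left and using $P^{2}=I$ gives $M=P(J-I-M)=J-P-PM=J-P-(J-I-M)=I-P+M$, so $P=I$, again a contradiction. Hence $\Delta$ is a Deza graph of diameter $2$ that is not strongly regular, i.e.\ strictly Deza.
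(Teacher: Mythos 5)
The survey states this result as a citation of [EFHHH99, Theorem~3.1] and gives no proof of its own, so there is nothing in the paper to compare against; your argument is correct and complete, and it follows the standard line of the original reference (symmetry of $PM$ gives $PM=MP^{-1}$, hence $P$ commutes with $M^{2}$, and the strong-regularity identity $M^{2}=(k-\mu)I+(\lambda-\mu)M+\mu J$ together with $\lambda\ne\mu$ upgrades this to $PM=MP$, after which $k\ne\mu$ rules out repeated neighbourhoods and forces $P^{2}=I$). The treatment of the ``moreover'' clause via the Deza children $M$ and $J-I-M$ is also sound.
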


The resulting strictly Deza graph in Theorem \ref{DezaDSS} has the same parameters as the original strongly regular graph.
We also point out that if a strictly Deza graph has an order 2 automorhism that interchanges only non-adjacent vertices, then we can apply dual Seidel switching this strictly Deza graph to produce a Deza graph. Note that in this case the resulting graph may be strongly regular.

Another remark is that dual Seidel switching can be called an operation that does control the spectrum of the resulting graph. In fact, let $M$ and $PM$ be the adjacency matrices of a graph $\Gamma$ and the switched graph $\Delta$, where $P$ represents the permutation matrix of an order 2 automorphism of $\Gamma$. Since $P^{-1} = P$, we have $(PM)^2 = (PMP)M = M^2$, which means that the spectrum of $PM$ is the union of $\{\theta, -\theta\}$, where $\theta$ runs over the eigenvalues of $M$.

The following is a useful proposition, which shows what happens with neighbourhoods of vertices after dual Seidel switching.

\begin{proposition}\label{Images} Let $\Gamma$ be a graph and $\varphi$ be its order 2 automorhism interchanging only non-adjacent vertices. Let $\Delta$ be the graph obtained from $\Gamma$ by dual Seidel switching induced by $\varphi$.
For the neighbourhood $N_\Delta(x)$ of a vertex $x$ in the graph $\Delta$, the following conditions hold:
$$
N_\Delta(x) =
\left\{
  \begin{array}{ll}
    N_\Gamma(x), & \hbox{if $\varphi(x) = x$;} \\
    N_\Gamma(\varphi(x)), & \hbox{if $\varphi(x) \ne x$.}
  \end{array}
\right.
$$
\end{proposition}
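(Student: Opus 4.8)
The plan is to unwind the definition of dual Seidel switching at the level of matrix entries. Write $M$ for the adjacency matrix of $\Gamma$ and let $P$ be the permutation matrix of $\varphi$, with the convention that $P_{x,y} = 1$ exactly when $y = \varphi(x)$; by the definition of dual Seidel switching (permutation of rows by $P$), the adjacency matrix of $\Delta$ is $PM$. The one computation that matters is that for any vertices $x$ and $z$,
$$
(PM)_{x,z} \;=\; \sum_{y} P_{x,y} M_{y,z} \;=\; M_{\varphi(x),z},
$$
so the $x$-th row of $PM$ coincides with the $\varphi(x)$-th row of $M$. Translating back to graphs, this says $N_\Delta(x) = N_\Gamma(\varphi(x))$ for every vertex $x$, and the two cases in the statement are now immediate: when $\varphi(x) = x$ the right-hand side is $N_\Gamma(x)$, and when $\varphi(x) \ne x$ it is literally $N_\Gamma(\varphi(x))$.

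For completeness I would first record why $PM$ is legitimately the adjacency matrix of a simple graph, so that the symbol $N_\Delta(x)$ is meaningful (this is built into the hypotheses of Theorem \ref{DezaDSS}, but it costs little to repeat). Symmetry: since $\varphi$ is an automorphism of $\Gamma$ with $\varphi^2$ the identity, applying $\varphi$ to both indices of $M_{\varphi(x),z}$ gives $M_{\varphi(x),z} = M_{x,\varphi(z)}$, hence $(PM)_{x,z} = (PM)_{z,x}$. Zero diagonal: $(PM)_{x,x} = M_{\varphi(x),x}$, which vanishes because either $\varphi(x) = x$ (and $M$ has zero diagonal) or $x$ and $\varphi(x)$ are non-adjacent, by the hypothesis that $\varphi$ interchanges only non-adjacent vertices.

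There is essentially no genuine obstacle in this argument. The only point requiring a moment's care is fixing the row/column convention for the permutation matrix $P$ — and even that is immaterial, since $\varphi$ is an involution, so $\varphi = \varphi^{-1}$ and the two possible conventions agree. After that, the proposition is just the observation that left-multiplying $M$ by $P$ moves row $\varphi(x)$ into position $x$, recorded in the language of neighbourhoods.
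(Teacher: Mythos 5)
Your proof is correct and is exactly the argument the paper intends: the paper's own proof is the one-line remark that the claim ``follows from the definition of dual Seidel switching,'' and your computation $(PM)_{x,z} = M_{\varphi(x),z}$ is precisely that definition unwound. The extra checks that $PM$ is symmetric with zero diagonal are a sensible bonus but not part of the paper's (essentially empty) proof.
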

\begin{proof}
    It follows from the definition of dual Seidel switching. $\square$
\end{proof}

\medskip

Given an integer $n \geqslant 3$, define the \emph{square lattice graph} $L(n)$ whose vertices are coordinates $(x,y)$, $1 \leqslant x \leqslant n, 1 \leqslant y \leqslant n$ with two distinct vertices $(x_1,y_1), (x_2,y_2)$ being adjacent whenever $x_1 = x_2$ or $y_1 = y_2$.
For any $i \in \{1, \ldots, n\}$, the sets $\{(i, j)~|~j \in \{1, \ldots, n\}\}$ and
$\{(j,i)~|~j \in \{1, \ldots, n\}\}$ are called the $i$th \emph{row} and the $i$th \emph{column} of $L(n)$, respectively.
Consider the mapping that sends any vertex $(x,y)$ to $(y,x)$ and call it by the \emph{reflection with respect to the main diagonal}. Also consider the mapping that sends any vertex $(x,y)$ to $(n+1 - x, n+1-y)$ and call it by the \emph{point reflection through the center}.

In \cite{KS10}, order 2 automorhisms of the square lattice graph $L(n)$ were studied. The following result was obtained.

\begin{theorem}[{\cite[Theorem 1]{KS10}}]\label{LatticeDSS}
If $n$ is even, then $L(n)$ has two non-equivalent order 2 automorphisms that interchange only non-adjacent vertices; they are represented by the reflection with respect to the main diagonal and the point reflection through the center. If $n$ is odd, then $L(n)$ has the only non-equivalent order 2 automorhism that interchanges only non-adjacent vertices; it is represented by the reflection with respect to the main diagonal.
\end{theorem}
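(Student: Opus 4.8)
The plan is to write down the whole automorphism group of $L(n)$, split it into its two natural pieces, and classify inside each piece the order $2$ elements that move every vertex to a non-adjacent one, up to conjugacy in $\operatorname{Aut}(L(n))$ (the relevant notion of equivalence here). First I would note that $L(n)$ is nothing but the Cartesian product $K_n\,\square\,K_n$, so by the Sabidussi--Vizing theorem---$K_n$ is prime for the Cartesian product and the two factors are equal---one gets $\operatorname{Aut}(L(n))=(S_n\times S_n)\rtimes\langle t\rangle$, where $(\sigma,\pi)\in S_n\times S_n$ acts by $(x,y)\mapsto(\sigma(x),\pi(y))$ and $t$ is the reflection with respect to the main diagonal, $t\colon(x,y)\mapsto(y,x)$. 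Thus every automorphism is either of \emph{type I}, $(x,y)\mapsto(\sigma(x),\pi(y))$, or of \emph{type II}, $(x,y)\mapsto(\sigma(y),\pi(x))$, and since the type is the image in the abelian quotient $\operatorname{Aut}(L(n))/(S_n\times S_n)\cong\mathbb{Z}_2$, it is preserved by conjugation. So the two types can be handled separately.

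For a type I involution $\varphi\colon(x,y)\mapsto(\sigma(x),\pi(y))$ with $\sigma^2=\pi^2=\mathrm{id}$ and $(\sigma,\pi)\neq(\mathrm{id},\mathrm{id})$, the vertex $\varphi(x,y)$ is distinct from $(x,y)$ and lies in its row or its column exactly when precisely one of the equalities $\sigma(x)=x$, $\pi(y)=y$ fails; hence $\varphi$ interchanges only non-adjacent vertices iff $\sigma(x)=x\Leftrightarrow\pi(y)=y$ for all $x,y$. A short case check, using $(\sigma,\pi)\neq(\mathrm{id},\mathrm{id})$, shows this forces $\sigma$ and $\pi$ to be fixed-point-free involutions of $\{1,\dots,n\}$; such involutions exist iff $n$ is even, and when they exist they form a single conjugacy class in $S_n$, so all such $\varphi$ are conjugate under $S_n\times S_n$. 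For even $n$ the reversal $i\mapsto n+1-i$ is a fixed-point-free involution, so the point reflection through the center is one of them. Therefore type I yields exactly one equivalence class if $n$ is even and none if $n$ is odd.

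For a type II automorphism $\varphi\colon(x,y)\mapsto(\sigma(y),\pi(x))$, evaluating $\varphi^2$ gives $\varphi^2(x,y)=(\sigma\pi(x),\pi\sigma(y))$, so $\varphi$ has order $2$ iff $\pi=\sigma^{-1}$, with no further restriction on $\sigma$; thus $\varphi(x,y)=(\sigma(y),\sigma^{-1}(x))$. Now $(x,y)$ and $\varphi(x,y)$ are in the same row ($x=\sigma(y)$) iff they are in the same column ($y=\sigma^{-1}(x)$), and this is precisely the condition for $\varphi$ to fix $(x,y)$; so any vertex that $\varphi$ moves is moved to a non-adjacent vertex, and \emph{every} type II involution interchanges only non-adjacent vertices. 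Conjugating $\varphi$ by the type I automorphism $(x,y)\mapsto(\sigma^{-1}(x),y)$ turns it into $t$, so all type II involutions form a single equivalence class, represented by the main-diagonal reflection, for every $n\ge 3$.

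Combining the two types proves the theorem: for $n$ odd the only class is that of the main-diagonal reflection; for $n$ even there is in addition the class of the point reflection through the center, and the two are inequivalent because they have different type. The only ingredient that is not elementary bookkeeping is the description of $\operatorname{Aut}(L(n))$; once that is available, the subtlest remaining point is the case analysis in the type I step that upgrades ``$\sigma$ and $\pi$ fix the same indices'' to ``$\sigma$ and $\pi$ are fixed-point-free''.
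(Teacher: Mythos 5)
This theorem is quoted in the survey from \cite[Theorem 1]{KS10} and no proof of it appears in the paper, so there is no in-paper argument to compare against; I can only assess your proposal on its own terms, and it is correct and complete. The structure is sound: Sabidussi--Vizing applied to $L(n)=K_n\,\square\,K_n$ (with $K_n$ prime for the Cartesian product) gives $\mathrm{Aut}(L(n))=(S_n\times S_n)\rtimes\langle t\rangle$, the type is a conjugacy invariant because it is the image in the quotient $\mathbb{Z}_2$, and the two case analyses check out. In type I, the condition ``$\sigma(x)=x\Leftrightarrow\pi(y)=y$ for all $x,y$'' does force each of $\sigma,\pi$ to be either the identity or fixed-point-free, and excluding $(\mathrm{id},\mathrm{id})$ leaves only the pair of fixed-point-free involutions, a single $(S_n\times S_n)$-conjugacy class existing exactly when $n$ is even. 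In type II, $\varphi^2(x,y)=(\sigma\pi(x),\pi\sigma(y))$ correctly yields $\pi=\sigma^{-1}$, the ``same row'' and ``same column'' conditions both reduce to the fixed-point condition $x=\sigma(y)$, and the conjugation by $(x,y)\mapsto(\sigma^{-1}(x),y)$ indeed carries $\varphi$ to $t$. The only points worth flagging are interpretive rather than mathematical: you implicitly read ``equivalent'' as ``conjugate in $\mathrm{Aut}(L(n))$'' (the natural choice, since conjugate involutions yield isomorphic graphs under dual Seidel switching, and consistent with how the survey uses the result), and the whole argument leans on the external Sabidussi--Vizing description of $\mathrm{Aut}(K_n\,\square\,K_n)$; it would be worth one sentence confirming that this description needs no exception at $n=4$ (the Shrikhande graph shares the parameters of $L(4)$ but is not isomorphic to it, so the product structure theorem applies verbatim).
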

Using Theorem \ref{LatticeDSS}, we apply dual Seidel switching to the lattice graphs, which gives strictly Deza graphs with parameters $(n^2,2(n-1),n-2,2)$; these parameters correspond to the parameters of the lattice graph as a strongly regular graph, which are $(n^2,2(n-1),n-2,2)$. We call the resulting graphs as the  \emph{quasi-lattice} strictly Deza graphs. It was determined in \cite[Lemma]{KS10} that the quasi-lattice strictly Deza graphs have two types of local subgraphs, which correspond to the fixed and to the moved vertices with respect to the order 2 automorphisms. It was then shown in \cite[Theorem 2]{KS10} that a strictly Deza graph obtained from $L(n)$ by dual Seidel switching, where the order 2 automorphism is the reflection with respect to the main diagonal, is characterised by its local subgraphs.

Given an integer $n \geqslant 5$, define the triangular graph $T(n)$ whose vertices are 2-element subsets in $\{1,\ldots,n\}$ with two vertices $U_1,U_2$ being adjacent whenever $|U_1 \cap U_2|=1$. For an even $n \geqslant 6$, consider the mapping that sends any vertex $\{x,y\}$ to $\{n+1-y,n+1-x\}$ and call it by the \emph{reflection with respect to the diagonal}. Note that this mapping fixes precisely the $\frac{n}{2}$ vertices $\{1,n\}, \{2,n-1\}, \ldots, \{\frac{n}{2},\frac{n}{2}+1\}$.

In \cite{S11}, order 2 automorhisms of the triangular graph $L(n)$ were studied. The following result was obtained.

\begin{theorem}[{\cite[Theorem 1]{S11}}]\label{TriangularDSS}
If $n$ is even, then $T(n)$ has the only non-equivalent order 2 automorphism that interchanges only non-adjacent vertices; it is represented by the reflection with respect to the diagonal. If $n$ is odd, then $T(n)$ has no order 2 automorphisms that interchange only non-adjacent vertices.
\end{theorem}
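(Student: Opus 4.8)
The plan is to reduce the statement to an elementary analysis of involutions in a symmetric group, using the description of the automorphism group of $T(n)$. The key input is the classical fact on automorphisms of line graphs (Whitney): since $T(n) = L(K_n)$, for $n \geqslant 5$ every automorphism of $T(n)$ is induced by a permutation of the ground set $\{1,\ldots,n\}$, so $\mathrm{Aut}(T(n)) \cong \mathrm{Sym}(\{1,\ldots,n\})$ acting naturally on $2$-element subsets. I would first note that this action is already faithful on $2$-subsets: if a permutation $\sigma$ fixes every pair $\{i,j\}$ then, comparing $\{\sigma(i),\sigma(j)\}$ with $\{\sigma(i),\sigma(k)\}$ for a third point $k$, one gets $\sigma(i) = i$ for all $i$. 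Consequently the order $2$ automorphisms of $T(n)$ are in bijection with the non-identity involutions $\sigma$ of $\{1,\ldots,n\}$, and two such automorphisms are equivalent (conjugate in $\mathrm{Aut}(T(n))$) exactly when the corresponding involutions are conjugate in $\mathrm{Sym}(\{1,\ldots,n\})$, i.e. have the same number of transpositions.

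Next I would restate the ``interchanges only non-adjacent vertices'' condition at the level of $\sigma$. For a vertex $U = \{x,y\}$ with $\sigma(U) \ne U$ we have $|U \cap \sigma(U)| \in \{0,1\}$, and $U$ is sent to a non-adjacent vertex precisely when $|U \cap \sigma(U)| = 0$. So the condition becomes: there is no $2$-subset $U$ with $\sigma(U) \ne U$ and $|U \cap \sigma(U)| = 1$.

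The core of the proof is then the following dichotomy for an involution $\sigma$. If $\sigma$ has a fixed point $x$ together with a moved point $y$, then $U = \{x,y\}$ satisfies $\sigma(U) = \{x,\sigma(y)\} \ne U$ and $|U \cap \sigma(U)| = 1$, so $\sigma$ is not admissible. If, on the other hand, $\sigma$ is fixed-point-free --- which forces $n$ to be even and $\sigma$ to be a product of $n/2$ disjoint transpositions --- then for any $U = \{x,y\}$ with $\sigma(U) \ne U$ a common point of $U$ and $\sigma(U)$, say $x = \sigma(y)$, would give $y = \sigma(x)$ and hence $\sigma(U) = U$, a contradiction; so $\sigma$ is admissible. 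Hence the admissible order $2$ automorphisms are exactly those induced by fixed-point-free involutions of $\{1,\ldots,n\}$. For $n$ odd there are none, which gives the second assertion. For $n$ even they form a single conjugacy class, so there is exactly one up to equivalence; and the permutation $i \mapsto n+1-i$ is a fixed-point-free involution inducing the reflection with respect to the diagonal, so this reflection is a representative, which gives the first assertion.

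I expect the only genuinely non-trivial ingredient to be the identification $\mathrm{Aut}(T(n)) \cong \mathrm{Sym}(n)$ for $n \geqslant 5$; here one should recall that there is no exceptional case in this range (in particular the Chang graphs are cospectral mates of $T(8)$, not extra automorphisms of it). Everything after that is a short verification, essentially the case analysis on cycle types carried out above.
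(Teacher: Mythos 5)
Your proof is correct, and since the paper only quotes this result from [S11] without reproducing a proof, your argument via Whitney's theorem ($\mathrm{Aut}(T(n))\cong \mathrm{Sym}(n)$ for $n\geqslant 5$, acting faithfully on $2$-subsets) followed by the fixed-point dichotomy for involutions is exactly the natural route and matches what the cited source does. The key steps --- ruling out involutions with both a fixed and a moved point via $U=\{x,y\}$, verifying admissibility of fixed-point-free involutions, and using that these form a single conjugacy class of $\mathrm{Sym}(n)$ --- are all sound.
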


Using Theorem \ref{TriangularDSS}, we apply dual Seidel switching to the triangular graphs, which gives strictly Deza graphs with parameters $(\frac{n(n-1)}{2},2(n-2),n-2,4)$; these parameters correspond to the parameters of the triangular graph as a strongly regular graph, which are $(\frac{n(n-1)}{2},2(n-2),n-2,4)$. We call the resulting graphs as the \emph{quasi-triangular} strictly Deza graphs. It was determined in \cite[Lemma]{S11} that the quasi-triangular strictly Deza graphs have two types of local subgraphs, which correspond to the fixed and to the moved vertices with respect to the order 2 automorphisms. It was then shown in \cite[Theorem 2]{S11} that a strictly Deza graph that has the same local subgraphs as the quasi-triangular strictly Deza graph is a quasi-triangular strictly Deza graph.

In \cite{GS13}, order 2 automorhisms of the complements of the lattice graph $\overline{L(n)}$ and the triangular graph $\overline{T(n)}$ were studied.

For any $i \in \{1, \ldots, n\}$, the sets $\{(i, j)~|~j \in \{1, \ldots, n\}\}$ and
$\{(j,i)~|~j \in \{1, \ldots, n\}\}$ are called the $i$th \emph{row} and the $i$th \emph{column} of $\overline{L(n)}$, respectively. For any $i \in \{1,\ldots,\lfloor n/2\rfloor\}$, let us take the first $i$ pairs of rows in $\overline{L(n)}$
(the pairs of $1$st and $2$nd, $3$rd and $4$th, $\ldots$, $(2i-1)$th and $(2i)$th rows).
Then the permutation that swaps rows in each of the $i$ pairs is a an order 2 automorphism of $\overline{L(n)}$ that interchanges only non-adjacent vertices. Let us call such an automorphism of $\overline{L(n)}$ as the $i$-\emph{automorphism}.

\begin{theorem}[{\cite[Proposition 6]{GS13}}]\label{overLDSS}
The graph $\overline{L(n)}$ has $\lfloor \frac{n}{2}\rfloor$ non-equivalent order 2 automorphisms that interchange only non-adjacent vertices; they are represented by the $i$-automorphisms for $i \in \{1,\ldots,\lfloor \frac{n}{2}\rfloor\}.$
\end{theorem}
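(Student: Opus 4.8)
The plan is to reduce everything to the structure of $\mathrm{Aut}(\overline{L(n)})$. Since a graph and its complement have the same automorphism group, $\mathrm{Aut}(\overline{L(n)})=\mathrm{Aut}(L(n))$, and for $n\ge 3$ the maximal cliques of $L(n)$ are exactly its $n$ rows and $n$ columns: two rows (or two columns) are disjoint while a row and a column share a single vertex, so any automorphism either preserves the partition of the vertex set into rows and into columns, or interchanges these two partitions. Because each vertex is recovered as the intersection of its row with its column, every $\varphi\in\mathrm{Aut}(L(n))$ has one of the forms
$$\varphi(x,y)=(\sigma(x),\tau(y))\qquad\text{or}\qquad\varphi(x,y)=(\tau(y),\sigma(x)),$$
with $\sigma,\tau\in S_n$; call these the \emph{coordinate type} and the \emph{transpose type}. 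I would also record the translation that two distinct vertices are \emph{non-adjacent in} $\overline{L(n)}$ exactly when they lie in a common row or a common column of $L(n)$, so the hypothesis on $\varphi$ becomes: every transposition occurring in the cycle decomposition of the order $2$ map $\varphi$ is a pair of vertices sharing a row or a column.

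For the $i$-automorphism $\varphi_i$, which is $(x,y)\mapsto(\rho_i(x),y)$ with $\rho_i=(1\,2)(3\,4)\cdots(2i-1\,2i)$, both relevant facts are already noted in the paragraph preceding the statement: it is of coordinate type, has order $2$, and its non-trivial $2$-cycles are the pairs $\{(2j-1,y),(2j,y)\}$, which share the column $y$. Moreover $\varphi_i$ fixes precisely the $n(n-2i)$ vertices whose first coordinate exceeds $2i$, hence moves $2in$ vertices; since conjugate permutations move the same number of points and $2in$ is strictly increasing in $i$, the automorphisms $\varphi_1,\dots,\varphi_{\lfloor n/2\rfloor}$ are pairwise non-equivalent. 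It therefore remains to show that any order $2$ automorphism $\varphi$ of $\overline{L(n)}$ interchanging only non-adjacent vertices is equivalent to one of them.

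This is the heart of the argument. First I would rule out the transpose type: if $\varphi(x,y)=(\tau(y),\sigma(x))$ then $\varphi^2=\mathrm{id}$ forces $\tau=\sigma^{-1}$, so the fixed vertices are the $n$ vertices $(x,\sigma(x))$, and every moved vertex $(x,y)$ lies in the $2$-cycle $\{(x,y),(\sigma^{-1}(y),\sigma(x))\}$, whose two vertices share neither a row nor a column — impossible, since $\varphi\neq\mathrm{id}$ has a moved vertex. Hence $\varphi$ is of coordinate type, $\varphi(x,y)=(\sigma(x),\tau(y))$ with $\sigma^2=\tau^2=\mathrm{id}$, and the requirement that each non-trivial $2$-cycle $\{(x,y),(\sigma(x),\tau(y))\}$ lie in a common row or column simplifies to the condition that $\sigma(x)=x$ or $\tau(y)=y$ for all $x,y$, that is, $\sigma=\mathrm{id}$ or $\tau=\mathrm{id}$. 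Since the transpose $t\colon(x,y)\mapsto(y,x)$ is an automorphism of $\overline{L(n)}$ and conjugation by it exchanges $\sigma$ and $\tau$, we may assume $\sigma=\mathrm{id}$ and $\tau$ a non-trivial involution, say a product of $k$ disjoint transpositions with $1\le k\le\lfloor n/2\rfloor$. Choosing $\pi\in S_n$ with $\pi\tau\pi^{-1}=\rho_k$ and conjugating $\varphi$ by the automorphism $(x,y)\mapsto(\pi(y),x)$ of $\overline{L(n)}$ then produces exactly $\varphi_k$. Combined with the non-equivalence above, this yields precisely $\lfloor n/2\rfloor$ equivalence classes, represented by the $i$-automorphisms.

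The main obstacle is this last paragraph: correctly eliminating the transpose-type involutions and then carrying out the conjugation that normalises a coordinate-type involution to an $i$-automorphism. The description of $\mathrm{Aut}(L(n))$ via maximal cliques, the verification of the properties of the $\varphi_i$, and the counting of moved vertices are all routine.
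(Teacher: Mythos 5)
Your argument is correct and complete. Note that the paper itself gives no proof of this statement --- it is quoted from [GS13, Proposition~6] --- so there is nothing to compare against; but your route (maximal cliques of $L(n)$ give $\mathrm{Aut}(\overline{L(n)})=(S_n\times S_n)\rtimes\mathbb{Z}_2$, transpose-type involutions are excluded because their $2$-cycles share neither a row nor a column, coordinate-type involutions force $\sigma=\mathrm{id}$ or $\tau=\mathrm{id}$, and conjugation normalises the surviving involutions to the $i$-automorphisms, distinguished by their numbers of moved vertices) is the natural one and all the individual steps check out.
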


For any $i \in \{1,2,\ldots,n\}$, denote by $C(i)$ the maximal clique of $T(n)$ induced by
the set of all $2$-subsets that contain $i$. Note that for any distinct $i,j \in \{1,2,\ldots,n\}$
the equality $C(i) \cap C(j) = \{i,j\}$ holds.
The mapping that swaps the vertices $\{1,z\}$ to $\{2,z\}$, for all $z \in \{3,4,\ldots,n\}$,
is an order 2 automorphism of $\overline{T(n)}$ that interchanges only non-adjacent vertices. We call this automorphism of $\overline{T(n)}$ as the $\{1,2\}$-\emph{automorphism}.
\begin{theorem}[{\cite[Proposition 1]{GS13}}]\label{overTDSS}
The graph $\overline{T(n)}$ has
the only non-equivalent order 2 automorphism that interchanges only non-adjacent vertices; it is represented by the $\{1,2\}$-automorphism.
\end{theorem}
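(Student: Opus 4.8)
\medskip\noindent\emph{Proof strategy.}
The plan is to reduce the whole question to the natural action of $S_n$ on the set of $2$-subsets of $\{1,\dots,n\}$. Since a graph and its complement have the same automorphism group, I would work with $\mathrm{Aut}(T(n))$ and identify it with $S_n$ via maximal cliques. For $n\geq 5$ the cliques $C(1),\dots,C(n)$ are exactly the maximal cliques of $T(n)$ of size $n-1$ (a pairwise intersecting family of $2$-subsets either has a common point, giving a subset of some $C(i)$, or consists of the three $2$-subsets of a $3$-set, a ``triangle'' of size $3<n-1$), so every automorphism permutes $\{C(1),\dots,C(n)\}$, giving a homomorphism $\mathrm{Aut}(T(n))\to S_n$; it is injective because a vertex $\{i,j\}$ is recovered as $C(i)\cap C(j)$, and surjective because $S_n$ already acts. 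Hence every order $2$ automorphism of $\overline{T(n)}$ is induced by an involution $\sigma=(a_1\,b_1)\cdots(a_t\,b_t)\in S_n$ with $t\geq 1$ disjoint transpositions, and two of them are equivalent exactly when the corresponding involutions are conjugate in $S_n$, i.e. when they have the same value of $t$. So there are precisely $\lfloor n/2\rfloor$ equivalence classes, indexed by $t\in\{1,\dots,\lfloor n/2\rfloor\}$, and it only remains to decide for which $t$ the automorphism interchanges only non-adjacent vertices.

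Next I would use that two distinct $2$-subsets are non-adjacent in $\overline{T(n)}$ precisely when they meet in exactly one point, so the condition to verify is $|U\cap\sigma(U)|=1$ for every $2$-subset $U$ with $\sigma(U)\neq U$. Writing $U=\{x,y\}$ and going through the cases according to how $x,y$ meet the support of $\sigma$: if both of $x,y$ are fixed, or $\{x,y\}=\{a_i,b_i\}$ for some $i$, then $U$ is fixed; if exactly one is moved, say $x=a_i$ and $y$ fixed, then $\sigma(U)=\{b_i,y\}$ and $U\cap\sigma(U)=\{y\}$, which is fine; the only remaining case is $x=a_i$, $y=a_j$ with $i\neq j$ (up to interchanging the roles of the $a$'s and $b$'s), and then $\sigma(U)=\{b_i,b_j\}$ is disjoint from $U$, so $U$ and $\sigma(U)$ are adjacent in $\overline{T(n)}$. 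Consequently, if $t\geq 2$ the vertex $U=\{a_1,a_2\}$ is moved to a vertex adjacent to it, so $\sigma$ is disqualified; and if $t=1$ this bad case never arises, so $\sigma$ does interchange only non-adjacent vertices.

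Thus exactly one equivalence class survives, that of a single transposition, which after relabelling we may take to be $(1\,2)$, and I would finish by checking that the induced automorphism of $\overline{T(n)}$ is the $\{1,2\}$-automorphism: it fixes $\{1,2\}$ and every $2$-subset contained in $\{3,\dots,n\}$, and it sends $\{1,z\}\mapsto\{2,z\}$ for each $z\in\{3,\dots,n\}$, exactly as in the definition above. I do not anticipate a real obstacle here; the only point that needs a little care is the identification $\mathrm{Aut}(\overline{T(n)})\cong S_n$ for all $n\geq 5$ (in particular that the star cliques $C(i)$ are distinguishable from the triangular cliques by their size), after which the statement drops out of the short case analysis.
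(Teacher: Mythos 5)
The paper itself gives no proof of this statement --- it is quoted verbatim from \cite[Proposition 1]{GS13} as part of the survey --- so there is nothing internal to compare against. Judged on its own, your argument is correct and complete. The two pillars are both sound: (i) for $n\geq 5$ the maximal cliques of $T(n)$ are the $n$ stars $C(i)$ of size $n-1$ together with the triangles of size $3$, so the stars are size-distinguished and $\mathrm{Aut}(\overline{T(n)})=\mathrm{Aut}(T(n))\cong S_n$ via the action on $\{C(1),\dots,C(n)\}$ (injective because $\{i,j\}=C(i)\cap C(j)$); and (ii) the case analysis on an involution $(a_1\,b_1)\cdots(a_t\,b_t)$ correctly isolates $U=\{a_i,a_j\}$ (and $\{a_i,b_j\}$), $i\neq j$, as the obstruction, since then $U$ and $\sigma(U)$ are disjoint, hence adjacent in $\overline{T(n)}$; this kills every $t\geq 2$ and leaves exactly the conjugacy class of a single transposition, which visibly induces the $\{1,2\}$-automorphism. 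The only point worth making explicit in a written version is the implicit convention that ``equivalent'' means conjugate in the full automorphism group, which your reduction to conjugacy classes of involutions in $S_n$ uses; with that spelled out, the proof stands.
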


In \cite{Z19}, dual Seidel switching dual was applied to the Berlekamp-Van Lint-Seidel graph, which gave a new strictly Deza graph with parameters of the Berlekamp-Van Lint-Seidel graph.

We conclude this section with a remark that dual Seidel switching, together with strong product with an edge, was shown to be the only operation that can produce strictly Deza graph with parameters $(n,k,k-1,a)$ and $\beta = 1$. The characterisation requires existence of a strongly regular graph with parameters $\lambda - \mu = -1$ (see \cite{GHKS19}). Strictly Deza graphs obtained by dual Seidel switching from the Paley graphs of square order and from the Hoffman-Singleton graph were discussed in \cite{GHKS19}.

\subsubsection{Generalised dual Seidel switching}
In \cite{KKS21}, a general approach to dual Seidel switching was presented. Then this approach was applied to produce Deza graphs with strongly regular children.

\subsubsection{Deza graphs that are Cayley graphs}
Let $G$ be a group and $S$ be a generating set of $G$ such that $e_G \not\in S$ and $S^{-1} = S$ hold.
The graph with vertex set $G$, such that any vertices $x,y \in G$ are adjacent if and only if $xy^{-1} \in S$ holds, is called a
\emph{Cayley graph} of group $G$ \emph{with connection set} $S$ and is denoted by $Cay(G,S)$.

In \cite{EFHHH99}, the following criterion for a Cayley graph to be a Deza graph was proposed.
\begin{proposition}[{\cite[Proposition 2.1]{EFHHH99}}]
        A Cayley graph $Cay(G,S)$ is a Deza graph with parameters $(n,k,b,a)$ if and only if $|G| = n$, $|S| = k$ and $SS^{-1} = aA + bB + k\{e\}$ holds, where $A, B$ and $\{e\}$ is a partition of $G$.
\end{proposition}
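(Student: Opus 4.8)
The plan is to read the statement in the integral group ring $\mathbb{Z}[G]$, where a subset $T\subseteq G$ is identified with the element $\sum_{t\in T}t$; since $S^{-1}=S$ as sets, $S$ and $S^{-1}$ coincide as ring elements. The heart of the argument is a single counting identity: for any $x,y\in G$, the number $|N(x,y)|$ of common neighbours of the vertices $x$ and $y$ in $Cay(G,S)$ equals the coefficient of $xy^{-1}$ in the product $SS^{-1}$. To prove this I would fix $x,y$ and consider the map $z\mapsto(xz^{-1},yz^{-1})$ on the set of common neighbours of $x$ and $y$: by definition $z$ is adjacent to both $x$ and $y$ exactly when $xz^{-1}\in S$ and $yz^{-1}\in S$, and for such $z$ the pair $(s,t):=(xz^{-1},yz^{-1})$ satisfies $st^{-1}=xz^{-1}zy^{-1}=xy^{-1}$. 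Conversely, a pair $(s,t)\in S\times S$ with $st^{-1}=xy^{-1}$ determines $z=s^{-1}x$ (equivalently $z=t^{-1}y$), which one checks is a common neighbour of $x$ and $y$; hence the map is a bijection onto $\{(s,t)\in S\times S:st^{-1}=xy^{-1}\}$, and the size of this set is precisely the coefficient of $xy^{-1}$ in $SS^{-1}$.

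Granting this identity, the remainder is bookkeeping. The graph $Cay(G,S)$ has $|G|$ vertices and, since $N(x)=\{sx:s\in S\}$, it is regular of valency $|S|$; so $|G|=n$ and $|S|=k$ are forced and appear on both sides. Taking $x=y$ in the identity recovers the valency as the coefficient of $e$ in $SS^{-1}$, namely $|\{(s,t)\in S\times S:st^{-1}=e\}|=|S|=k$, which accounts for the $k\{e\}$ term. As $(x,y)$ ranges over ordered pairs of \emph{distinct} vertices, $g:=xy^{-1}$ ranges over all elements of $G$ other than $e$, and $|N(x,y)|$ equals the coefficient of $g$ in $SS^{-1}$. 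Therefore $Cay(G,S)$ is a Deza graph with parameters $(n,k,b,a)$ if and only if every such coefficient lies in $\{a,b\}$, i.e. if and only if, letting $A$ (resp. $B$) be the set of $g\in G\setminus\{e\}$ whose coefficient in $SS^{-1}$ equals $a$ (resp. $b$), the sets $A$, $B$, $\{e\}$ partition $G$ and $SS^{-1}=aA+bB+k\{e\}$. The converse direction is immediate from the identity.

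I do not expect a genuine obstacle: the proposition is essentially a dictionary entry translating the definition of a Deza graph into group-ring language. The two points that require a little care are the orientation conventions — checking that the adjacency rule $xy^{-1}\in S$ yields $N(x)=Sx$ and leads to the coefficient of $xy^{-1}$, rather than of $x^{-1}y$, in $SS^{-1}$ — and the interpretation of the word ``partition'': if $A$ or $B$ happens to be empty then $Cay(G,S)$ is strongly regular, which is still a Deza graph (one may take $a=b$), so allowing empty parts keeps the equivalence exact and matches the convention for Deza graphs adopted earlier in the paper.
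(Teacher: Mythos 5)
Your proof is correct: the bijection between common neighbours $z$ of $x,y$ and pairs $(s,t)\in S\times S$ with $st^{-1}=xy^{-1}$ is exactly the right dictionary, and the rest follows. The paper states this proposition as a cited result from \cite{EFHHH99} without reproducing a proof, and your argument is the standard one given there, so there is nothing substantive to compare.
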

In \cite{CR21}, a new infinite family of strictly Deza graphs that are Cayley graphs was constructed. The WL-rank of these graphs is equal to the number of vertices. The graphs from this family are divisible design graphs and integral.

In Section \ref{EnumRes}, we discuss the enumeration of strictly Deza graphs with $a > 0$ that are Cayley graphs and have from 8 to 59 vertices.
In Section \ref{NewRes}, we obtain some new results on strictly Deza graphs that are Cayley graphs over cyclic groups.

\subsubsection{Deza graphs from association schemes}
Let $X$ be a set of size $n$, and $R_0, R_1, \ldots, R_d$ be relations defined on $X$. Let $A_0, A_1, \ldots, A_d$ be the 0-1 matrices corresponding to the relations, that is, the $(x,y)$-entry of $A_i$ is 1 if and only if $(x,y) \in R_i$. Then $(X, \{R_i\}_{i = 0}^d)$ is called a \emph{$d$-class symmetric association scheme} if
    \begin{enumerate}
        \item $A_0 = I$;
        \item $\sum_iA_i = J$;
        \item each $A_i$ is symmetric;
        \item for each pair $i$ and $j$, $A_iA_j = \sum_k p_{ij}^kA_k$ for some constants $p_{ij}^k$.
    \end{enumerate}

    \begin{theorem}[{\cite[Theorem 4.2]{EFHHH99}}]\label{ASUnion}
    Let $(X, \{R_i\}_{i = 0}^d)$ be a symmetric association scheme, and $F \ subset \{1,2,\ldots, d\}$. Let $\Delta$ be the graph with adjacency matrix $\sum_{f \in F}A_f$. Then $\Delta$ is a Deza graph if and only if $\sum\limits_{f,g \in F}p_{f,g}^k$
    takes on at most two values, as $k$ ranges over $\{1,\ldots,d\}$.
    \end{theorem}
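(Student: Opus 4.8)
The plan is to work entirely with the matrices of the scheme. Write $M := \sum_{f \in F} A_f$ for the adjacency matrix of $\Delta$, and for each $j \in \{1,\dots,d\}$ put $\pi_j := \sum_{f,g \in F} p_{fg}^{j}$, so that the condition in the theorem reads: $\pi_1,\dots,\pi_d$ assume at most two distinct values. The heart of the argument will be the computation of $M^2$; everything else is bookkeeping, and I do not foresee a real obstacle.

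First I would record that $\Delta$ is genuinely a regular graph on $n := |X|$ vertices. Because $\sum_i A_i = J$ and the $A_i$ are $(0,1)$-matrices, they have pairwise disjoint supports; hence $M$ is a symmetric $(0,1)$-matrix, and since $0 \notin F$, the matrix $A_0 = I$ is not among the summands, so $M$ has zero diagonal. Each $A_f$ with $f \ge 1$ has constant row sum equal to the valency $k_f$ of $R_f$ (and $k_f = p_{ff}^{0}$, the scheme being symmetric), so $M$ has constant row sum $k := \sum_{f \in F} k_f \le \sum_{i=1}^{d} k_i = n - 1 < n$. Thus $\Delta$ is a $k$-regular graph on $n$ vertices with $k < n$.

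Next comes the only computation. Using $A_f A_g = \sum_j p_{fg}^{j} A_j$,
$$M^2 \;=\; \sum_{f,g \in F} A_f A_g \;=\; \sum_{j=0}^{d} \Bigl( \sum_{f,g \in F} p_{fg}^{j} \Bigr) A_j \;=\; \sum_{j=0}^{d} \pi_j A_j .$$
For the diagonal term, $\pi_0 = \sum_{f,g \in F} p_{fg}^{0}$, and since the scheme is symmetric $p_{fg}^{0}$ vanishes unless $g = f$, with $p_{ff}^{0} = k_f$; hence $\pi_0 = \sum_{f \in F} k_f = k$, as it must be, the $(x,x)$-entry of $M^2$ counting the neighbours of $x$. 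Therefore $M^2 = kI + \sum_{j=1}^{d} \pi_j A_j$. Now $R_1,\dots,R_d$ partition the ordered pairs of distinct vertices and each is non-empty, so for distinct vertices $x,y$ there is a unique $j \in \{1,\dots,d\}$ with $(x,y) \in R_j$, and then
$$|N_\Delta(x,y)| \;=\; (M^2)_{xy} \;=\; \pi_j .$$

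Finally both implications follow at once, since the set of values taken by $|N_\Delta(x,y)|$ over distinct pairs $x,y$ is exactly $\{\pi_1,\dots,\pi_d\}$. If this set has at most two elements $a \le b$, then every distinct pair has $a$ or $b$ common neighbours (possibly $a=b$), so $\Delta$ is a Deza graph with parameters $(n,k,b,a)$; conversely, if $\Delta$ is a Deza graph with parameters $(n,k,b,a)$ then $|N_\Delta(x,y)| \in \{a,b\}$ for all distinct $x,y$, whence $\pi_j \in \{a,b\}$ for every $j$, i.e.\ $\pi_1,\dots,\pi_d$ take at most two values. If one prefers the matrix description of Deza graphs recalled above, in the two-value case one sets $A := \sum_{j:\pi_j = a} A_j$ and $B := \sum_{j:\pi_j=b} A_j$, which are $(0,1)$-matrices with $A+B+I = J$ and $M^2 = aA + bB + kI$. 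The only steps deserving a moment's care are the disjoint-support and constant-row-sum remarks that identify $M$ with a $k$-regular graph, and the evaluation $\pi_0 = k$, which is precisely what makes the $kI$ term in $M^2$ come out right.
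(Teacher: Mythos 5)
Your proof is correct: the identity $M^2=\sum_{j}\bigl(\sum_{f,g\in F}p_{fg}^{j}\bigr)A_j$ together with the observations that $\pi_0=k$ and that each ordered pair of distinct vertices lies in exactly one $R_j$ with $j\ge 1$ is precisely the intended argument (the paper itself only cites this result from [EFHHH99] without reproducing the proof). Your closing remark exhibiting $A=\sum_{\pi_j=a}A_j$ and $B=\sum_{\pi_j=b}A_j$ also matches the matrix characterisation $M^2=aA+bB+kI$ recalled earlier in the survey, so nothing is missing.
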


    In Section \ref{NewRes} we give a construction of strictly Deza graphs based on cyclotomic association schemes.

\subsubsection{Deza graphs as lexicographical products}
Let $\Gamma_1 = (V_1, E_1)$ and $\Gamma_2 = (V_2, E_2)$ be graphs. The \emph{lexicographical product} $\Gamma_1[\Gamma_2]$ of $\Gamma_1$
and $\Gamma_2$ is a graph with vertex set $V_1 \times V_2$, and adjacency defined by
$$(u_1,u_2) \sim (v_1,v_2) \text{~iff~} u_1 \sim u_2\text{~or~}(u_1 = v_1 \text{~and~} u_2 \sim v_2).$$

\begin{example}\label{ExCoedgeRegular}
Let $\Gamma_1 = K_x$, the complete graph on $x$ vertices and $\Gamma_2 = yK_2$, $y$ copies of an $K_2$. Then $\Gamma_1[\Gamma_2]$ is a coedge-regular strictly Deza graph with parameters $(2xy,1+2y(x-1),2y(x-1),2y(x-2)+2)$.
\end{example}

\begin{example}\label{ExEdgeRegular}
Let $\Gamma_1$ be a strongly regular graph with parameters $(n,k,\lambda,\lambda)$. Let $\Gamma_2 = \overline{K_{n'}}$, the coclique of size $n'$. Then $\Gamma_1[\Gamma_2]$ is an edge-regular strictly Deza graph with parameters $(nn',kn',kn',\lambda n')$.
\end{example}

\begin{example}
Let $\Gamma$ be a strongly regular graph with parameters $(n,k,\lambda,\mu)$, where $\lambda - \mu = -1$. Then the graph $\Gamma[K_2]$ is a strictly Deza graph with parameters $(2n,2k+1,2k,2\mu)$.
\end{example}

The following proposition gives a general condition for the lexicographical product of a strongly regular graph and a Deza graph to be a Deza graph.

\begin{proposition}[{\cite[Proposition 2.3]{EFHHH99}}]
        Let $\Gamma_1$ be a strongly regular graph with parameters $(n,k,\lambda,\mu)$ and $\Gamma_2$ be a Deza graph with parameters $(n',k',b,a)$. Then $\Gamma_1[\Gamma_2]$ is a $(k'+kn')$-regular graph on $nn'$ vertices. It is a Deza graph if and only if $$|\{a+kn',b+kn',\mu n',\lambda n'+2k'\}| \leqslant 2.$$
\end{proposition}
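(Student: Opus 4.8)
\medskip\noindent\textbf{Proof proposal.}
The plan is to compute $|N(x)\cap N(y)|$ for an arbitrary pair of distinct vertices $x=(u_1,u_2)$, $y=(v_1,v_2)$ of $\Gamma_1[\Gamma_2]$ straight from the adjacency rule of the lexicographical product, and then to read off which values can occur. First, the vertex $(u_1,u_2)$ has $kn'$ neighbours of the form $(w_1,w_2)$ with $w_1\sim u_1$ in $\Gamma_1$ (each of the $n'$ possible second coordinates is allowed) together with $k'$ neighbours of the form $(u_1,w_2)$ with $w_2\sim u_2$ in $\Gamma_2$; hence $\Gamma_1[\Gamma_2]$ has $nn'$ vertices and is regular of valency $kn'+k'$, as claimed. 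For the common-neighbour count I would split into three cases according to the position of $u_1$ relative to $v_1$ in $\Gamma_1$: (i) $u_1=v_1$ (so $u_2\ne v_2$); (ii) $u_1\sim v_1$; (iii) $u_1\ne v_1$ and $u_1\not\sim v_1$. In every case a common neighbour $(w_1,w_2)$ is sorted further by whether $w_1\notin\{u_1,v_1\}$, $w_1=u_1$, or $w_1=v_1$, and the product's adjacency rule then reduces each subcount to counting neighbours either in $\Gamma_1$ or in $\Gamma_2$.

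Carrying out the count: in case (i), a common neighbour with $w_1\ne u_1$ must satisfy $w_1\sim u_1$, giving $kn'$ of them, while one with $w_1=u_1$ must have $w_2\in N_{\Gamma_2}(u_2)\cap N_{\Gamma_2}(v_2)$, which has size $a$ or $b$ since $\Gamma_2$ is a Deza graph and $u_2\ne v_2$; the total is $kn'+a$ or $kn'+b$. In case (ii), a common neighbour with $w_1\notin\{u_1,v_1\}$ must have $w_1\in N_{\Gamma_1}(u_1)\cap N_{\Gamma_1}(v_1)$, and because $\Gamma_1$ is loopless this set contains neither $u_1$ nor $v_1$, so there are exactly $\lambda$ choices for $w_1$ and $n'$ for $w_2$; a common neighbour with $w_1=u_1$ is automatically adjacent to $y$ (as $u_1\sim v_1$) and adjacent to $x$ iff $w_2\sim u_2$, contributing $k'$ vertices, and symmetrically $w_1=v_1$ contributes another $k'$; altogether $\lambda n'+2k'$. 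In case (iii), the generic choice $w_1\in N_{\Gamma_1}(u_1)\cap N_{\Gamma_1}(v_1)$ again avoids $u_1,v_1$ and gives $\mu n'$, while the families $w_1=u_1$ and $w_1=v_1$ are empty because $u_1\not\sim v_1$; altogether $\mu n'$.

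Thus $|N(x)\cap N(y)|$ always lies in $\{a+kn',\,b+kn',\,\lambda n'+2k',\,\mu n'\}$, and conversely each of these four numbers is realized: $a+kn'$ and $b+kn'$ arise from pairs of vertices lying in a common copy $\{w\}\times V_2$ of $\Gamma_2$, the pair chosen to have $a$ (resp.\ $b$) common neighbours in $\Gamma_2$ (such pairs exist by the definition of a Deza graph), and $\lambda n'+2k'$ and $\mu n'$ arise because the strongly regular graph $\Gamma_1$ has at least one edge and at least one non-edge. Hence $\Gamma_1[\Gamma_2]$ is a Deza graph, i.e.\ $|N(x)\cap N(y)|$ takes at most two values, exactly when $|\{a+kn',b+kn',\mu n',\lambda n'+2k'\}|\le 2$, which is the assertion.

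I expect the only genuinely delicate point to be the bookkeeping of the ``diagonal'' contributions in cases (ii) and (iii): one must verify that $N_{\Gamma_1}(u_1)\cap N_{\Gamma_1}(v_1)$ contains neither $u_1$ nor $v_1$, so that the counts $\lambda n'$ and $\mu n'$ are exact rather than corrected by boundary terms, and that the three families of common neighbours indexed by $w_1\notin\{u_1,v_1\}$, $w_1=u_1$ and $w_1=v_1$ are pairwise disjoint and none of them contains $x$ or $y$. A secondary point is to handle the degenerate situations in which $\Gamma_1$ is complete or edgeless, where one of the four candidate values is not attained but the stated inequality still decides whether the product is Deza.
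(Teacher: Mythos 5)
The paper states this proposition without proof, citing \cite[Proposition~2.3]{EFHHH99}, and your direct computation --- classifying a common neighbour $(w_1,w_2)$ of $(u_1,u_2)$ and $(v_1,v_2)$ by whether $w_1\notin\{u_1,v_1\}$, $w_1=u_1$ or $w_1=v_1$, within the three cases $u_1=v_1$, $u_1\sim v_1$, $u_1\not\sim v_1$ --- is correct and is the standard argument, with all four values $a+kn'$, $b+kn'$, $\lambda n'+2k'$, $\mu n'$ attained so that the equivalence goes both ways. The only residual caveat is the one you already flagged yourself: for a degenerate (complete or edgeless) $\Gamma_1$ one of the last two values is not realized, but under the usual convention that a strongly regular graph is non-trivial this does not arise.
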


\subsubsection{Deza graphs based on symplectic and orthogonal graphs}
In \cite{LW08}, \cite{GWL10}, \cite{G13}, \cite{LWG13}, \cite{MP13} and \cite{A15}, several constructions of strictly Deza graphs based on symplectic graphs were proposed. Note that only \cite{LW08}, \cite{GWL10} and \cite{MP13} have constructions of strictly Deza graphs with $k \ne b$.

In \cite{GWZ13} and \cite{LGW14}, two constructions of strictly Deza graphs based on orthogonal graphs were proposed. Both constructions give strictly Deza graphs with $k = b$.

Note that a strictly Deza graph with $k = b$ is necessarily the lexicographical product of a strongly regular graph with $\lambda = \mu$ and a coclique of size $n' \geqslant 2$ (see \cite[Theorem 2.6]{EFHHH99}).

\subsubsection{Deza graphs as commuting graphs}
Let $V$ be the $n$-dimensional vector space over the finite field $\mathbb{F}_q$ for some positive integer $n$ and odd prime power $q$. Let $Y$ be a subgroup in $GL_n(\mathbb{F}_q)$.

It was shown in \cite[Section 3A]{H82} that, given an involution $w \in Y$, there are corresponding subspaces in $V$ defined as $V_w^+ = \{v \in V~|~w(v) = v\}$ and $V_w^- = \{v \in V~|~w(v) = -v\}$. In this setting, the decomposition $V = V_w^+ \oplus V_w^-$ holds. The \emph{type} of the involution $w$ is $dim V_w^-$.

Let $G$ be a group and $w$ be an involution in $G$. The \emph{commuting graph} $\Gamma_G(a)$ of the involution $w$ is defined on the set of involutions that are conjugate of $w$ in $G$, and two distinct vertices are adjacent whenever they commute.

In \cite{Z12}, a certain family of commuting graphs related to the unitary groups turned out to be strictly Deza graphs.
\begin{theorem}[\cite{Z12}]
Let $G = U_n(q)$, $q$ be an odd prime power, $n > 2$ and $w$ be an involution in $G$ of type 1. If $n$ is even, then the graph $\Gamma_G(w)$ is an edge-regular strictly Deza graph with parameters
$$(\frac{q^{n-1}(q^n-1)}{q+1}, \frac{q^{n-2}(q^{n-1}+1)}{q+1},
\frac{ q^{n-2}(q^{n-3}+1)}{(q+1)},   \frac{q^{n-3}(q^{n-2}-1)}{q+1}),$$
where $\lambda = \frac{q^{n-3}(q^{n-2}-1)}{q+1}$.
If $n$ is odd, then the graph $\Gamma_G(a)$ is an edge-regular strictly Deza graph with parameters
$$(\frac{q^{n-1}(q^n+1)}{q+1}, \frac{q^{n-2}(q^{n-1}-1)}{q+1},
\frac{q^{n-3}(q^{n-2}+1)}{q+1},
\frac{ q^{n-2}(q^{n-3}-1)}{q+1}),$$
where $\lambda = \frac{q^{n-3}(q^{n-2}+1)}{q+1}$.
\end{theorem}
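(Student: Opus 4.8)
The plan is to translate the statement into a problem in the Hermitian geometry of $V=\mathbb{F}_{q^2}^{\,n}$ and to reduce everything to counting non-isotropic points. An involution $w$ of type $1$ has $V=V_w^{+}\oplus V_w^{-}$ with $V_w^{-}$ a non-degenerate (hence non-isotropic) line and $V_w^{+}=(V_w^{-})^{\perp}$; conversely each non-isotropic line $L$ yields the involution $w_L$ acting as $-1$ on $L$ and $+1$ on $L^{\perp}$. Since (by Witt's theorem) the unitary group is transitive on non-isotropic lines, the type-$1$ involutions form a single conjugacy class, in bijection with the non-isotropic points of $V$. A short eigenspace argument shows that for $L\neq M$ the involutions $w_L$ and $w_M$ commute precisely when $L\perp M$. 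Hence $\Gamma_G(w)$ is isomorphic to the graph $\Gamma$ whose vertices are the non-isotropic points of $V$, two of them adjacent when orthogonal, and the rest of the proof concerns $\Gamma$.

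Next I would record the standard count: if $N(m)$ is the number of non-isotropic lines of a non-degenerate $m$-dimensional Hermitian space over $\mathbb{F}_{q^2}$, then $N(m)=q^{m-1}\bigl(q^{m}-(-1)^{m}\bigr)/(q+1)$, obtained by subtracting the known number of isotropic points from $(q^{2m}-1)/(q^{2}-1)$. The number of vertices of $\Gamma$ is $N(n)$, and since the neighbourhood of a vertex $L$ is the set of non-isotropic lines of the non-degenerate $(n-1)$-space $L^{\perp}$, the valency is $N(n-1)$; separating $n$ even from $n$ odd reproduces the first two stated parameters.

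The heart of the argument is the count of common neighbours of two distinct vertices $L,M$: these are exactly the non-isotropic lines lying in $(L+M)^{\perp}$, a space of dimension $n-2$. If $L+M$ is non-degenerate — which is automatic when $L\perp M$ — then $(L+M)^{\perp}$ is a non-degenerate $(n-2)$-space and the count is $N(n-2)$. If $L\not\perp M$ and $L+M$ is degenerate, its radical is a line $R$, one has $R\subseteq(L+M)^{\perp}$ and $R$ is the radical of $(L+M)^{\perp}$; writing $(L+M)^{\perp}=R\perp W$ with $W$ non-degenerate of dimension $n-3$, a line of $(L+M)^{\perp}$ is non-isotropic iff it meets $R$ trivially and its image in $W$ is non-isotropic, which gives the count $q^{2}N(n-3)$. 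So the number of common neighbours takes only the two values $N(n-2)$ and $q^{2}N(n-3)$, whose difference equals $(-1)^{n}q^{\,n-3}\neq 0$; thus $\Gamma$ is a Deza graph with $\{a,b\}=\{N(n-2),\,q^{2}N(n-3)\}$ and $\lambda=N(n-2)$, and expanding $N$ in the two parities yields exactly the parameter lists of the statement.

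Finally, to obtain \emph{strictly} Deza one checks diameter $2$ together with non-constancy of the common-neighbour counts. Diameter $2$ holds as soon as $N(n-2)>0$ and $q^{2}N(n-3)>0$, so that every non-adjacent pair has a common neighbour; and $\Gamma$ is not strongly regular because both the non-degenerate and the degenerate case for $L+M$ genuinely occur among non-adjacent pairs, which is elementary from surjectivity of the norm map $\mathbb{F}_{q^2}^{*}\to\mathbb{F}_{q}^{*}$ (choose $u,v$ with $(u,u)(v,v)$ equal to, respectively different from, $N((u,v))$). The main obstacle is the degenerate case above — correctly locating the radical of $(L+M)^{\perp}$ and counting non-isotropic lines in a Hermitian space carrying a radical — together with verifying that the degree constraints keep all of these counts positive, which is precisely where the smallest admissible values of $n$ have to be inspected directly.
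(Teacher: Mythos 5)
This theorem appears in the survey portion of the paper and is quoted from Zyulyarkina's article [Z12]; the paper itself gives no proof, so there is nothing internal to compare your argument against. On its own terms your proposal is a correct and essentially complete derivation. The identification of type-$1$ involutions with non-isotropic points of the Hermitian space, the equivalence of commuting with orthogonality (for $L\ne M$, $w_M$ preserves the $(-1)$-eigenspace $L$ of $w_L$ iff $L\subseteq M^{\perp}$), the valency $N(n-1)$, and the dichotomy for common neighbours according to whether $L+M$ is non-degenerate (count $N(n-2)$, which is forced for adjacent pairs and gives edge-regularity) or degenerate with a line radical (count $q^{2}N(n-3)$) all check out; the two counts differ by exactly $\pm q^{\,n-3}$ and expanding $N(m)=q^{m-1}\bigl(q^{m}-(-1)^{m}\bigr)/(q+1)$ in the two parities reproduces the stated parameter lists, including the observation that $\lambda$ equals the smaller parameter for $n$ even and the larger one for $n$ odd. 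One caveat you correctly anticipate but should make explicit: for $n=3$ the degenerate count is $q^{2}N(0)=0$, so non-adjacent pairs with no common neighbour exist and the graph does not have diameter $2$; the "strictly Deza" conclusion therefore genuinely requires $n\ge 4$, and the hypothesis $n>2$ in the quoted statement is too weak at this boundary. That defect lies in the statement as transcribed in the survey, not in your argument.
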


\subsubsection{Deza graphs through $\pi$-local fusion graphs of finite simple groups of Lie-type of even characteristic}

A construction of Deza graphs through $\pi$-local fusion graphs of finite simple groups of Lie-type of even characteristic
was found in \cite{T20}.

\subsection{Deza graphs with restrictions}
Strongly regular graphs that have a vertex with disconnected second neighbourhood are known to be complete multipartite with parts of size at least 3 (see \cite[Lemma 3.1]{GGHR92}). In \cite{GIKMS17}, strictly Deza graphs that have a vertex with disconnected second neighbourhood were studied.

\begin{theorem}[{\cite[Theorem 1]{GIKMS17}}]
Let $\Delta$ be a strictly Deza graph. If the second neighborhood of each vertex is
disconnected, then $\Delta$ is either edge-regular or coedge-regular.
\end{theorem}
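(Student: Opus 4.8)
\medskip\noindent I would argue by contradiction, after two reductions. If $a=0$ then, since $\Delta$ has diameter $2$, every non-adjacent pair of vertices has at least one and hence exactly $b$ common neighbours, so $\Delta$ is coedge-regular; if $b=k$ then by \cite[Theorem~2.6]{EFHHH99} $\Delta$ is a lexicographical product $\Gamma[\overline{K_{n'}}]$ of a strongly regular graph $\Gamma$ with $\lambda=\mu$ by a coclique, which is edge-regular by Example~\ref{ExEdgeRegular}. So assume $1\le a<b<k$, and suppose $\Delta$ is neither edge-regular nor coedge-regular; since common-neighbour numbers take only the two values $a<b$, all four of the situations~1--4 listed before Proposition~2 then occur.

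The engine of the argument is the following local rigidity fact. Fix a vertex $v$ and let $C_1,\dots,C_m$, $m\ge 2$, be the components of $N_2(v)$. If $x\in C_i$ and $y\in C_j$ with $i\ne j$, then every common neighbour of $x$ and $y$ lies in $N(v)$: such a vertex cannot be $v$ (because $v\not\sim x$) and cannot lie in $N_2(v)$ (it would join $C_i$ to $C_j$). Hence $N(x,y)\subseteq N(x,v)\cap N(y,v)$; in particular, if $x$ is an $a$-vertex for $v$, then $N(x,y)=N(x,v)$ for every $y\in\bigcup_{j\ne i}C_j$, so all of $\bigcup_{j\ne i}C_j$ is completely joined to the fixed $a$-set $D:=N(x,v)\subseteq N(v)$. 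I would then record three numerical consequences: (1) every component of $N_2(v)$ has size at least $k-b+1$, since a vertex at distance $2$ has at most $b$ neighbours in $N(v)$ hence at least $k-b$ inside its own component; (2) a component containing an $a$-vertex for $v$ has size at least $k-a+1$, so, taking $v$ from situation~1, $n-k-1=|N_2(v)|\ge (k-a+1)+(k-b+1)$ and therefore $n\ge 3k-a-b+3$; and (3) a vertex $d\in D$ is adjacent to $v$, to $x$ and to all of $\bigcup_{j\ne i}C_j$, hence has at least $k-b+2$ neighbours in $N_2(v)$, which forces $|N(d,v)|\le b-3$ and in particular $b\ge a+3$.

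Next I would pull the mixed distance-$2$ behaviour onto a single vertex. Let $S'$ (resp.\ $T'$) be the set of vertices having an $a$-vertex (resp.\ a $b$-vertex) at distance $2$; both are non-empty, $S'\cup T'=V$, and by the symmetry $|N(u,w)|=|N(w,u)|$ one sees that any $v\notin T'$ has $N_2(v)\subseteq S'$, and any $v\notin S'$ has $N_2(v)\subseteq T'$. If $S'\cap T'=\varnothing$, then connectedness gives an edge $v\sim v'$ with $v\in S'$ and $v'\in T'$; now $N_2(v)\subseteq S'$ and $N_2(v')\subseteq T'$ are disjoint, so $V=N[v]\cup N[v']$ and hence $n=2k-|N(v,v')|\le 2k-a$, contradicting $n\ge 3k-a-b+3$ together with $b\le k$. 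So $S'\cap T'\ne\varnothing$: there is a vertex $v^{*}$ having an $a$-vertex $x\in C_1$ and a $b$-vertex at distance $2$; put $D=N(x,v^{*})$, so $\bigcup_{j\ge 2}C_j$ is completely joined to $D$. If in addition $C_1$ is completely joined to $D$ — which is forced whenever some $C_j$ with $j\ge 2$ contains an $a$-vertex for $v^{*}$, by applying the rigidity fact at such a vertex — then each $d\in D$ is adjacent to $v^{*}$ and to all $n-k-1$ vertices of $N_2(v^{*})$, so $|N(d,v^{*})|=2k-n\ge a$, i.e.\ $n\le 2k-a$, again a contradiction.

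Thus $C_1$ is not completely joined to $D$, and by the previous paragraph this means every vertex of $\bigcup_{j\ge 2}C_j$ is a $b$-vertex for $v^{*}$, with neighbourhood in $N(v^{*})$ of the form $D\cup F$ for a $(b-a)$-element set $F\subseteq N(v^{*})\setminus D$. The remaining work is to derive a contradiction here: using the rigidity fact, compare the $N(v^{*})$-neighbourhoods of pairs of vertices lying in distinct components of $N_2(v^{*})$, and of pairs consisting of such a vertex and a vertex $x'\in C_1$ with $x'\not\sim d_0$ for some $d_0\in D$; this pins the sets $F$ down, shows that such an $x'$ is an $a$-vertex for every vertex of $\bigcup_{j\ge 2}C_j$, and lets one re-apply the rigidity fact at those vertices — whose second neighbourhoods are again disconnected — propagating the almost-complete-bipartite pattern between $N(v^{*})$ and $N_2(v^{*})$ until it becomes incompatible with the existence of an adjacent $b$-valued pair (situation~4). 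I expect this last case — turning the rigid bipartite structure forced by disconnectedness into a contradiction with edge-irregularity, carried out uniformly at every vertex and combined with the counts behind Proposition~1 — to be the real obstacle; the first three paragraphs are essentially bookkeeping.
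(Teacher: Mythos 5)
Your opening reductions and the first three paragraphs are sound, and I verified them step by step: the disposal of $a=0$ and of $b=k$ via \cite[Theorem 2.6]{EFHHH99}; the observation that a common neighbour of vertices lying in distinct components of $N_2(v)$ must lie in $N(v)$ and hence in $N(x,v)\cap N(y,v)$; the component-size bounds giving $n\ge 3k-a-b+3$ and $b\ge a+3$; and the argument that $S'\cap T'\ne\varnothing$, which produces a vertex $v^{*}$ having both an $a$-vertex and a $b$-vertex at distance two and then forces the configuration you describe at the end. (For what it is worth, the survey you were given does not contain a proof of this theorem at all --- it is quoted from \cite{GIKMS17} --- so there is no in-paper argument to compare your route against.)

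The problem is that the proof is not finished, and the missing part is exactly where the content of the theorem lives. In the terminal configuration --- $x\in C_1$ an $a$-vertex for $v^{*}$ with $D=N(x,v^{*})$, every vertex $y$ of $\bigcup_{j\ge 2}C_j$ a $b$-vertex with $N(y,v^{*})=D\cup F_y$, $|F_y|=b-a$, and $C_1$ not completely joined to $D$ --- you offer only a plan (``compare the $N(v^{*})$-neighbourhoods \dots\ propagating the almost-complete-bipartite pattern \dots\ until it becomes incompatible'') and you yourself flag it as ``the real obstacle.'' Nothing you have written shows that the sets $F_y$ are actually pinned down or that the propagation ends in a contradiction rather than in a consistent non-edge-regular, non-coedge-regular strictly Deza graph. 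Indeed, the first step of the plan already stalls: for $y\in C_j$, $y'\in C_{j'}$ with $j,j'\ge 2$ distinct, the rigidity fact gives only $N(y,y')\subseteq D\cup(F_y\cap F_{y'})$ with $|N(y,y')|\ge a=|D|$, which forces $F_y=F_{y'}$ only in the subcase $|N(y,y')|=b$ and yields no information when $|N(y,y')|=a$, since $N(y,y')$ need not even contain $D$. Everything up to this point is (as you say) bookkeeping that rules out degenerate situations; until this last case is closed by an actual argument, the theorem is not proved.
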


\begin{theorem}[{\cite[Theorem 2]{GIKMS17}}]\label{N2ERG}
Let $\Delta$ be an edge-regular strictly Deza graph that contains a vertex $x$ such that
the graph $N_2(x)$ is disconnected. Then $\Delta$ is the lexicographical product $\Gamma_1[\Gamma_2]$ of a strongly regular graph $\Gamma_1$ with
parameters $(n, k, \lambda, \mu)$, $\lambda = \mu$, and an $s$-coclique $\Gamma_2$ with $s \geqslant 2$.
\end{theorem}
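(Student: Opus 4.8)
The plan is to prove the single statement $b=k$; granted this, the theorem is immediate from the characterisation quoted above (from \cite[Theorem~2.6]{EFHHH99}): a strictly Deza graph with $k=b$ is the lexicographical product of a strongly regular graph with $\lambda=\mu$ and a coclique of size at least $2$. So everything reduces to extracting the equality $b=k$ from edge-regularity together with the existence of a vertex $x$ with $N_2(x)$ disconnected.

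Since $\Delta$ has diameter $2$, $V(\Delta)=\{x\}\sqcup N(x)\sqcup N_2(x)$; write $N_2(x)=C_1\sqcup\dots\sqcup C_m$, $m\ge2$, for the connected components of the subgraph induced on $N_2(x)$. Let $\lambda$ be the number of common neighbours of two adjacent vertices, which is constant by edge-regularity; since $\Delta$ is strictly Deza we have $\lambda\in\{a,b\}$, $a<b$, and each of $a$ and $b$ is attained by some non-adjacent pair. For $u\in N_2(x)$ put $S_u:=N(u)\cap N(x)$, so $|S_u|=|N(u,x)|\in\{a,b\}$ and $|N(u)\cap C_{i(u)}|=k-|S_u|$, where $C_{i(u)}$ is the component of $u$. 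Two observations will be used throughout. First, applying edge-regularity to the edge $x\sim w$ shows that every $w\in N(x)$ has exactly $\lambda$ neighbours inside $N(x)$, hence exactly $k-1-\lambda$ neighbours inside $N_2(x)$. Second (the separation property): if $u$ and $v$ lie in distinct components of $N_2(x)$, then a common neighbour of $u$ and $v$ can be neither in $N_2(x)$ (it would join the two components) nor equal to $x$ (which is adjacent to neither), so it lies in $N(x)$; thus $N(u)\cap N(v)=S_u\cap S_v$ and $|S_u\cap S_v|\in\{a,b\}$.

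If some component is a singleton $\{u\}$, then $N(u)=S_u$, so $k=|S_u|\in\{a,b\}$, which forces $b=k$ (as $a<b\le k$) and finishes the proof. So assume, for contradiction, that $b<k$. Then no component is a singleton, hence every $|C_i|\ge2$ and every component contains an edge. A short argument rules out the possibility that all cross-component pairs have $b$ common neighbours: that would force every $S_u$ $(u\in N_2(x))$ to coincide with one $b$-element set $S^{*}$, so each vertex of $S^{*}$ would be adjacent to all of $N_2(x)$ while each vertex of $N(x)\setminus S^{*}$ (nonempty, as $b<k$) would be adjacent to none of it, making the constant $k-1-\lambda$ equal to both $|N_2(x)|=n-1-k$ and $0$; this yields $n=k+1$, i.e.\ $\Delta=K_{k+1}$, which is not strictly Deza. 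Hence some cross-component pair has exactly $a$ common neighbours.

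The remaining step --- the one I expect to be the genuine obstacle --- is to turn this last conclusion into a numerical contradiction. The main device is a double count of the edges between $N(x)$ and $N_2(x)$: their number equals $k(k-1-\lambda)$ on the $N(x)$-side and $\sum_{u\in N_2(x)}|S_u|$ on the $N_2(x)$-side, each summand lying in $\{a,b\}$. One organises this sum by component and by the ``thin'' ($|S_u|=a$) versus ``thick'' ($|S_u|=b$) type of vertices, exploits the separation inequalities --- in particular, a thin vertex $u$ satisfies $S_u\subseteq S_v$ for every $v$ outside its component, which forces each vertex of $S_u$ to have many neighbours in $N_2(x)$ and hence makes the component of $u$ large --- and uses $|C_i|\ge 1+(k-|S_u|)$ for $u\in C_i$ together with $\sum_i|C_i|=n-1-k$. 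A case split according to the value of $\lambda$ and to whether or not $N_2(x)$ contains a thin vertex then produces, in each branch, an inequality incompatible with $b<k$. This contradiction establishes $b=k$, and the asserted structure of $\Delta$ then follows from \cite[Theorem~2.6]{EFHHH99}. $\square$
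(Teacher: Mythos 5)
Your reduction of the theorem to the single equality $b=k$ is legitimate: by the quoted \cite[Theorem~2.6]{EFHHH99}, a strictly Deza graph has $b=k$ exactly when it is $\Gamma_1[\overline{K_{n_2}}]$ with $\Gamma_1$ strongly regular with $\lambda=\mu$ (and $n_2\ge 2$ is forced since $n_2=1$ would make $\Delta$ strongly regular). The preparatory steps you do carry out are also correct: the count of neighbours of $w\in N(x)$ inside $N(x)$ and $N_2(x)$, the separation property for cross-component pairs, the singleton-component case (which immediately gives $b=k$), and the elimination of the case where every cross-component pair has $b$ common neighbours (which collapses $\Delta$ to a complete graph). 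Note that the survey itself contains no proof of this theorem --- it only cites \cite[Theorem~2]{GIKMS17} --- so there is no in-paper argument to measure your route against; the entire burden of the result lies in the original reference.

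That burden is exactly where your proposal stops. After establishing that, under the assumption $b<k$, every component of $N_2(x)$ has at least two vertices and some cross-component pair has $a$ common neighbours, you do not derive a contradiction: the final paragraph is an outline (``a double count\ldots a case split\ldots produces, in each branch, an inequality incompatible with $b<k$'') with no branch actually executed and no inequality actually exhibited. You yourself flag this as ``the genuine obstacle,'' and it is: the ingredients you list (the identity $k(k-1-\lambda)=\sum_{u\in N_2(x)}|S_u|$, the containment $S_u\subseteq S_v$ for a thin $u$ and $v$ in another component, the bound $|C_i|\ge 1+k-|S_u|$, and $\sum_i|C_i|=n-1-k$) do not by themselves visibly close either the ``all vertices of $N_2(x)$ thick'' branch or the ``some thin vertex'' branch, and each requires a separate, nontrivial argument. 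As written, the proposal is therefore a correct reduction plus a collection of valid lemmas, but not a proof: the decisive step that distinguishes the edge-regular case remains missing.
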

Note that the graphs from Theorem \ref{N2ERG} coincide with the graphs from Example \ref{ExEdgeRegular}.

\begin{theorem}[{\cite[Theorem 3]{GIKMS17}}]\label{N2CERG}
Let $\Delta$ be a coedge-regular Deza graph of diameter 2 containing a vertex $x$ such
that the graph $N_2(x)$ is disconnected. Then $\Delta$ is the lexicographical product of a complete multipartite
graph with parts of the same size $s$ greater than 2 and an edge.
\end{theorem}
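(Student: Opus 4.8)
\medskip
\noindent\textbf{A proof proposal.}
We may assume $\Delta$ is not strongly regular (otherwise, by the fact recalled above, $\Delta$ is a complete multipartite graph with all parts of size at least $3$); so $\Delta$ is strictly Deza and $a<b$. Let $\mu$ be the coedge-regularity parameter, so distinct non-adjacent vertices have $\mu$ common neighbours and $\mu\in\{a,b\}$, and write $N[v]=\{v\}\cup N(v)$. The first step is a rigidity statement for $N_2(x_0)$: \emph{there is a set $F\subseteq N(x_0)$ with $|F|=\mu$ such that $N(v)\cap N(x_0)=F$ for every $v\in N_2(x_0)$.} Indeed, take $y,z$ in distinct components of $N_2(x_0)$; a common neighbour of $y,z$ is neither $x_0$ (which is not adjacent to $y$) nor a vertex of $N_2(x_0)$ (it would join the two components), so it lies in $N(x_0)$, and since $\Delta$ has diameter $2$ and is coedge-regular, $|N(y,z)|=\mu=|N(y)\cap N(x_0)|=|N(z)\cap N(x_0)|$; comparing cardinalities gives $N(y)\cap N(x_0)=N(z)\cap N(x_0)$, and running over all pairs yields $F$. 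Two consequences: every vertex of $F$ is adjacent to all of $N_2(x_0)$; and every $w\in N(x_0)\setminus F$ has no neighbour in $N_2(x_0)$, so $N(w)\subseteq\{x_0\}\cup(N(x_0)\setminus\{w\})$, whence $N[w]=N[x_0]$ by a cardinality count. Thus $N(x_0)\setminus F$ is exactly the set of true twins of $x_0$, of which there are $t:=k-\mu$.

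Next I would pin down the parameters. Since $\Delta$ is not strongly regular, $t\ge1$ (else $\mu=k$ and $\Delta$ is complete multipartite), so $x_0$ has a twin $w$ with $|N(x_0,w)|=k-1$; thus $k-1\in\{a,b\}$. Also $\mu\ge1$ (else $\Delta$ has diameter greater than $2$), so $F\ne\emptyset$; for $v\in F$ a direct count, using that $v$ is adjacent to all of $N_2(x_0)$ and to $x_0$, gives $|N(x_0,v)|=2k-n$, so $2k-n\in\{a,b\}$. As $\Delta$ is not complete, $2k-n<k-1$, and $a<b$ then forces $\{a,b\}=\{2k-n,k-1\}$ with $b=k-1$. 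If $\mu=2k-n$ then $t=k-\mu=n-k$, but taking any $y\in N_2(x_0)$ and using $N(y)=F\sqcup(N(y)\cap N_2(x_0))$ gives $t=k-\mu=|N(y)\cap N_2(x_0)|\le|N_2(x_0)|-1=n-k-2$, a contradiction. Hence $\mu=b=k-1$, $a=2k-n$, and $t=1$: $x_0$ has a unique true twin. (One can also note that $N_2(x_0)$ is now a disjoint union of at least two edges, matching the block structure of the target graph, but the only facts needed below are $\mu=b=k-1$, $a<b$, and that $x_0$ has a unique twin.)

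The structure now globalises. Since $b=\mu=k-1$, for \emph{every} vertex $x$ the $b$-vertices of $x$ are precisely the $n-1-k$ vertices at distance $2$ from $x$ together with the true twins of $x$: an adjacent vertex $u$ with $|N(x,u)|=k-1$ satisfies $|N(x)\setminus N(u)|=1$, and that set is $\{u\}$, so symmetrically $N[x]=N[u]$. By the proposition recalled above on the numbers $\alpha,\beta$, the number $\beta$ of $b$-vertices is independent of $x$; hence the number of true twins of $x$ equals the constant $\beta-(n-1-k)$, which at $x_0$ equals $1$. So every vertex of $\Delta$ has a unique true twin, and $\Delta\cong H[K_2]$, where $H$ is the quotient of $\Delta$ by the true-twin equivalence. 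Since $\Delta$ is regular, $H$ is regular of valency $k_H=(k-1)/2$; since $\Delta$ is coedge-regular, $H$ is coedge-regular with parameter $\mu_H=\mu/2=(k-1)/2=k_H$; and a regular graph in which non-adjacent vertices have as many common neighbours as the valency is complete multipartite with all parts of one size $s$. Finally $N_2^{\Delta}(x_0)$ is the union of the two-element fibres over $N_2^{H}(\bar x_0)$, hence is disconnected if and only if $N_2^{H}(\bar x_0)$ is; since the latter is an independent set of $s-1$ vertices, the hypothesis forces $s\ge3$. Therefore $\Delta\cong H[K_2]$ with $H$ complete multipartite with parts of size $s>2$, which is exactly the asserted lexicographical product.

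The main obstacle, I expect, is the first half: the rigidity lemma for $N_2(x_0)$ and the bookkeeping leading to $\mu=b=k-1$ and to the uniqueness of the twin of $x_0$. Once that local picture is in place, propagating it (every vertex has a unique true twin, via the constancy of $\beta$) and identifying the twin-quotient as a regular complete multipartite graph are essentially formal, as is deducing $s\ge3$ from the disconnectedness of $N_2(x_0)$.
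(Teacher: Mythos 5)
Your argument is essentially correct and complete for the case that $\Delta$ is \emph{strictly} Deza, and I have checked each step: the rigidity of $N(v)\cap N(x_0)=F$ across $N_2(x_0)$, the identification of $N(x_0)\setminus F$ with the true twins of $x_0$, the counts $k-1$ and $2k-n$ forcing $\{a,b\}=\{2k-n,k-1\}$, the exclusion of $\mu=2k-n$, the propagation of ``exactly one true twin'' via the constancy of $\beta$, and the identification of the twin-quotient as a regular complete multipartite graph with $s\geqslant 3$ all go through. Note that the survey only cites \cite[Theorem 3]{GIKMS17} and reproduces no proof, so there is nothing in this paper to compare against line by line; your route (pin down $b=\mu=k-1$, show every vertex has a unique true twin, quotient by the twin relation) is the natural one and, for what it is worth, recovers exactly the parameters of Example~\ref{ExCoedgeRegular}.

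The one genuine soft spot is your opening parenthetical. As quoted in the survey, the hypothesis is ``coedge-regular Deza graph of diameter $2$,'' which does not exclude strongly regular graphs, and a complete multipartite graph $K_{m\times s}$ with $s\geqslant 3$ satisfies every hypothesis (coedge-regular, diameter $2$, two distinct common-neighbour counts, disconnected second neighbourhood) yet is \emph{not} of the form $H[K_2]$ with $H$ complete multipartite: your own reduction shows that in the $H[K_2]$ graphs every vertex has a true twin, which fails in $K_{m\times s}$. So you cannot dispose of the strongly regular case by observing that $\Delta$ is then complete multipartite with parts of size at least $3$ --- that case must instead be excluded by hypothesis. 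This is really an imprecision in the statement as transcribed in the survey (Theorems~1 and~2 from the same source speak of \emph{strictly} Deza graphs, and strictness is exactly what your proof needs, since you invoke $a<b$ and $t\geqslant 1$); with ``Deza graph of diameter $2$'' read as ``strictly Deza graph,'' your proof stands as written.
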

Note that the graphs from Theorem \ref{N2CERG} coincide with the graphs from Example \ref{ExCoedgeRegular}.

Deza graphs that do not contain $K_{1,3}$ as induced subgraphs were studied in \cite{KM14}, \cite{M14}, \cite{M17} and \cite{KM17}. In particular, strictly Deza graphs that are line graphs were listed in \cite{KM14}.

A connected graph $\Delta$ of even order is \emph{$\ell$-extendable} if it is of order at least $2\ell + 2$, contains
a matching of size $\ell$, and if every such matching is contained in a perfect matching of $\Delta$. In \cite{MS15}, 2-extendable Deza graphs of even order
and diameter 2 were classified. It turns out that there are only four Deza graphs of diameter 2 that are not 2-extendable.

Deza graphs of small Weisfeiler-Leman rank (WL-rank) were studied in \cite{BPR20}. In particular, Deza circulant graphs of WL-rank 4 were classified.

Spectra of Deza graphs with strongly regular children were studied in \cite{AHHKKS21}.

\subsection{Characterisations of Deza graphs with special parameters}
The following theorem gives a description of strictly Deza graphs with parameters $k = b$ in terms of strongly regular graphs with $\lambda = \mu$.

\begin{theorem}[{\cite{EFHHH99}[Theorem 2.6]}]
Let $\Delta$ be a strictly Deza graph with parameters $(n,k,b,a)$. The condition $b = k$ holds if and only if $\Delta = \Gamma_1[\Gamma_2]$ where $\Gamma_1$ is a strongly regular graph with parameters $(n_1,k_1,\lambda, \lambda)$ and $\Gamma_2$ is $\overline{K_{n_2}}$ for some $n_1,k_1,\lambda,n_2$. Moreover, the parameters satisfy
$$n = n_1n_2,~~k = b = k_1n_2,~~a = \lambda n_2,~~n_2 = \frac{k^2-an}{k-a}.$$
\end{theorem}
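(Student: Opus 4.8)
The plan is to prove the two implications separately; the substance is all in the ``only if'' direction, and within it, in showing that the natural partition of $\Delta$ into ``twin'' classes (vertices with equal neighbourhoods) has classes of equal size.

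\emph{The ``if'' direction.} Suppose $\Delta = \Gamma_1[\overline{K_{n_2}}]$ with $\Gamma_1$ strongly regular with parameters $(n_1,k_1,\lambda,\lambda)$. Then Example~\ref{ExEdgeRegular}, applied with $\Gamma_2 = \overline{K_{n_2}}$, immediately yields that $\Delta$ is an edge-regular strictly Deza graph with parameters $(n_1 n_2, k_1 n_2, k_1 n_2, \lambda n_2)$, so $n = n_1 n_2$, $k = b = k_1 n_2$ and $a = \lambda n_2$. The remaining identity $n_2 = (k^2-an)/(k-a)$ is then a one-line substitution using the feasibility relation $k_1(k_1-\lambda-1) = (n_1-k_1-1)\lambda$ for $\Gamma_1$, which rearranges to $k_1^2 - \lambda n_1 = k_1-\lambda$ (equivalently, it follows from the formula for the number of $b$-vertices in \cite[Proposition~1.1]{EFHHH99}).

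\emph{The ``only if'' direction, first stage.} Assume $\Delta$ is strictly Deza with $b = k$. Since $a < b$, the matrices $A,B$ in $M^2 = aA+bB+kI$ are uniquely determined, and a pair of distinct vertices $u,v$ is a $b$-pair exactly when $|N(u)\cap N(v)| = k$, which (because $|N(u)|=|N(v)|=k$) happens exactly when $N(u)=N(v)$. Hence ``$u=v$ or $N(u)=N(v)$'' is an equivalence relation; write its classes as $C_1,\dots,C_{n_1}$. Each $C_i$ is a coclique (no loops, so equal-neighbourhood vertices are non-adjacent), each neighbourhood is a union of classes, and therefore edges between two distinct classes are present either for all pairs or for none; let $\Gamma_1$ be the resulting quotient graph on $\{C_1,\dots,C_{n_1}\}$, so that $\Delta$ is a ``blow-up'' of $\Gamma_1$. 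The key observation is that the $b$-vertices of a vertex $x\in C_i$ are precisely the other members of $C_i$; hence, by the constancy of the number of $b$-vertices \cite[Proposition~1.1]{EFHHH99}, every $|C_i|-1$ equals the same number $\beta = (k(k-1)-a(n-1))/(k-a)$, so all classes share the common size $n_2 := \beta+1 = (k^2-an)/(k-a)$, which is at least $2$ because $b=k$ forces a $b$-pair to exist. Thus $\Delta = \Gamma_1[\overline{K_{n_2}}]$.

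\emph{The ``only if'' direction, second stage.} It remains to show $\Gamma_1$ is strongly regular with $\lambda=\mu$. Let $R$ be the $n\times n_1$ characteristic matrix of the partition into classes; then $R^{T}R = n_2 I_{n_1}$, $RR^{T} = I_n + B$, $RJ_{n_1}R^{T} = J_n$, and $MR = n_2 RN$, where $N$ is the adjacency matrix of $\Gamma_1$ (the $(v,i)$-entry of $MR$ counts $|N(v)\cap C_i|$, which is $n_2$ or $0$ according as the class of $v$ is adjacent to $C_i$ in $\Gamma_1$). Using $b=k$ and $A+B+I=J$ to rewrite $M^2 = aJ_n + (k-a)(I_n+B) = R\bigl(aJ_{n_1}+(k-a)I_{n_1}\bigr)R^{T}$, multiplying on the right by $R$, substituting $MR=n_2RN$ and $R^{T}R=n_2I_{n_1}$, and cancelling the full-column-rank matrix $R$, one obtains $n_2 N^2 = aJ_{n_1}+(k-a)I_{n_1}$. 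Reading the diagonal shows $\Gamma_1$ is regular of valency $k_1 = k/n_2$; reading an off-diagonal entry shows any two distinct vertices of $\Gamma_1$ have exactly $a/n_2$ common neighbours — whether adjacent or not. So $\Gamma_1$ is strongly regular with parameters $(n_1, k/n_2, a/n_2, a/n_2)$, giving $k=k_1n_2$ and $a=\lambda n_2$ with $\lambda=\mu=a/n_2$; moreover $\Gamma_1$ is neither $K_{n_1}$ (else $\Delta$ is complete multipartite, hence strongly regular, contradicting strictness) nor edgeless (else $\Delta$ has diameter $>2$), so it is a genuine strongly regular graph, completing the proof.

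I expect the main obstacle to be conceptual rather than computational: one has to notice that, when $b=k$, the $b$-pairs coincide with the ``equal neighbourhood'' relation, so that the constancy of the number of $b$-vertices pins down the twin classes as equicardinal; everything afterwards is routine bookkeeping with $R$. The one place that needs a little care is the very end — ruling out the degenerate shapes of $\Gamma_1$ that would make $\Delta$ strongly regular or disconnect it.
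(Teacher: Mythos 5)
The paper states this result only as a citation of \cite[Theorem~2.6]{EFHHH99} and gives no proof of its own, so there is nothing internal to compare against; judged on its own terms, your argument is correct and is essentially the standard one. All the key steps check out: with $b=k$ and $a<b$, the $b$-pairs are exactly the pairs with identical neighbourhoods, the constancy of $\beta$ forces the twin classes to have common size $n_2=\beta+1=(k^2-an)/(k-a)$, and the quotient computation $n_2N^2=aJ_{n_1}+(k-a)I_{n_1}$ correctly yields $k_1=k/n_2$ and $\lambda=\mu=a/n_2$; the verification of $n_2=(k^2-an)/(k-a)$ in the ``if'' direction via $k_1^2-\lambda n_1=k_1-\lambda$ is also right. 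Your closing remarks ruling out $\Gamma_1$ complete or edgeless are the right degeneracies to address.
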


Let $\Delta$ be a strictly Deza graph with parameters $(n,k,k-1,a)$. In the case $\beta > 1$, $\Delta$ is shown to be the lexicographical product of a complete multipartite graph with parts of the same size and an edge (see \cite[Theorem]{KMS19}). In the case $\beta = 1$, $\Delta$ can be constructed from a strongly regular graph with parameters $\lambda - \mu = -1$ with use of certain two operations: the lexicographical product with an edge and, possibly, dual Seidel switching (see \cite[Theorem 2]{GHKS19}).

Deza graphs with parameters $(n,k,k-2,a)$ were listed in \cite{KS19}.

\subsection{Enumeration results}\label{EnumRes}
All strictly Deza graphs up to 13 vertices were listed in \cite{EFHHH99}. In \cite{GS11}, the list was extended up to 16 vertices. Then the list of strictly Deza graphs was extended \cite{GPS21} up to 21 vertices. The results are available by \url{http://alg.imm.uran.ru/dezagraphs/dezatab.html}.

All strictly Deza graphs with $a > 0$ that are Cayley graphs and have up to 59 vertices were listed in \cite{GS14}. The list contains 1272 graphs, which are available by \url{http://alg.imm.uran.ru/dezagraphs/deza_cayleytab.html}.

All (0,2)-graphs with valency at most 8 were listed in \cite{B06} and \cite{BO09}. The list is available by \url{https://www.win.tue.nl/~aeb/graphs/recta/02graphs.html}.

\subsection{Deza digraphs}
Deza digraphs were introduced and studied in \cite{ZW03}, \cite{WF06}, \cite{WF08} and \cite{CKS20}.

\subsection{Open problems}
In this section we list some open problems on Deza graphs.

\begin{problem}
What are strictly Deza graphs with parameters $(n,k,b,0)$?
\end{problem}

\begin{problem}
What are strictly Deza graphs that are not edge-regular, not coedge-regular and have a vertex with disconnected second neighbourhood?
\end{problem}

\begin{problem}[M. Muzychuk]
What are Deza graphs that admit an equitable 5-partition \{$x$, $A_1(x)$, $A_2(x)$, $B_1(x)$, $B_1(x)$\}, where $x$ is a vertex, $A(x)$ and $B(x)$ are $a$-vertices and $b$-vertices of $x$, $A_i(x) = A(x) \cap N_i(x)$, $B_i(x) = B(x) \cap N_i(x)$, $i \in \{1,2\}$? Are there such graphs that, for every vertex $x$, admit such an equitable 5-partition with the same quotient matrix?
\end{problem}

\begin{problem}
What is the vertex connectivity of Deza graphs? In particular, is there a strictly Deza graph with vertex connectivity less than $k-1$ where $k$ is the valency?
\end{problem}

\begin{problem}
What are Deza graphs that have four eigenvalues?
\end{problem}

\begin{problem}
For a strongly regular graph with parameters $(n,k,\lambda,\mu)$, $k \ne \mu$, $\lambda \ne \mu$, what are its order 2 automorphisms that interchange only non-adjacent vertices? Dual Seidel switching with respect to such an automorphism leads to a strictly Deza graph.
\end{problem}

\begin{problem}
What are strictly Deza graphs that are Cayley graphs? In particular, what are strictly Deza circulants?
\end{problem}

\section{New results on strictly Deza circulants}\label{NewRes}
\subsection{Introduction}
Strongly regular circulants were independently studied in \cite{BM79}, \cite{HLW79} and \cite{M84}. The following result was obtained.

\begin{theorem}[{\cite{BM79}, \cite{HLW79}, \cite{M84}}]\label{SRGCirculant}
Let $\Gamma$ be a strongly regular circulant. Then $\Gamma$ is isomorphic to a Paley graph $P(p)$ for some prime $p$, $p \equiv 1(4)$.
\end{theorem}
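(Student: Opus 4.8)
\medskip
\noindent\textbf{Proof proposal.}
The plan is to read the strong regularity off the eigenvalues, which for a circulant are character sums, use Galois theory to control which characters give which eigenvalue, reduce to the case $n$ prime, and finish there with a Gauss-sum computation. Write $\Gamma=\mathrm{Cay}(\mathbb Z_n,S)$ with $S=-S$, $0\notin S$, $|S|=k$, strongly regular with parameters $(n,k,\lambda,\mu)$. First I would remove the imprimitive cases: if $\mu=0$ then $\Gamma$ is a disjoint union of cliques, and if $\mu=k$ then $\overline\Gamma$ (again a strongly regular circulant) is a disjoint union of cliques, so $\Gamma$ is complete multipartite; no Paley graph occurs, so we may assume $0<\mu<k$, i.e. both $\Gamma$ and $\overline\Gamma$ are connected. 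Then $\Gamma$ has exactly three eigenvalues $k>r\ge 0>s$, with $k$ simple.

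Now the eigenvalues of $\Gamma$ are $\eta_a=\sum_{x\in S}\zeta_n^{ax}$, $a\in\mathbb Z_n$, where $\zeta_n=e^{2\pi i/n}$; they are real since $S=-S$, with $\eta_0=k$ and $\eta_a\in\{r,s\}$ for $a\ne 0$. The Galois group $\mathrm{Gal}(\mathbb Q(\zeta_n)/\mathbb Q)\cong(\mathbb Z/n)^{\times}$ acts on them by $\sigma_t(\eta_a)=\eta_{ta}$ and permutes the two-element set $\{r,s\}$, so $D_r=\{a\ne 0:\eta_a=r\}$ and $D_s=\mathbb Z_n\setminus(\{0\}\cup D_r)$ are each a union of orbits of the subgroup of $(\mathbb Z/n)^{\times}$ of index at most $2$ fixing $r$. (Equivalently, $\{0\},D_r,D_s$ is the dual translation scheme, so a primitive strongly regular circulant is the same thing as a primitive rank-$3$ Schur ring over $\mathbb Z_n$.)

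The heart of the matter — the step I expect to be the main obstacle — is to prove $n$ is prime. The tool is the interplay of $S$ with the subgroups of $\mathbb Z_n$: for a proper nontrivial subgroup $H$, the identities $\eta_a=\sum_{x\in S}\zeta_n^{ax}$ evaluated on the characters trivial on $H$ (which see the multiset image of $S$ in $\mathbb Z_n/H$) and on those nontrivial on $H$ (which see $S\cap H$ and the distribution of $S$ over the cosets of $H$) impose stringent counting and divisibility relations linking $|S\cap H|$, those coset multiplicities, the parameters $(n,k,\lambda,\mu)$ and the eigenvalues $r,s$. Exploiting these for all $H$ — essentially the Bridges--Mena argument, with the conference subcase $r,s=\tfrac{-1\pm\sqrt n}{2}$ handled in parallel using that $\sqrt n\in\mathbb Q(\zeta_n)\cap\mathbb R$ restricts $n$ through the quadratic subfield of $\mathbb Q(\zeta_n)$ — leaves no consistent configuration unless $n=p$ is prime. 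Alternatively one can invoke the classical structure theory of Schur rings over cyclic groups (going back to Schur and Wielandt), by which a cyclic group of composite order carries no primitive Schur ring of rank $\ge 3$. Working this case analysis cleanly is where the real effort lies.

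It remains to identify $\Gamma$ for $n=p$ prime. Here $(\mathbb Z/p)^{\times}$ is cyclic and transitive on $\mathbb Z_p\setminus\{0\}$, so if $r,s$ were rational then every $\sigma_t$ would fix every $\eta_a$, forcing $\eta_a$ constant on $\mathbb Z_p\setminus\{0\}$ and hence $D_r$ or $D_s$ empty — impossible for a graph with three eigenvalues. So $\Gamma$ is a conference graph, $r=\tfrac{-1+\sqrt p}{2}$, $s=\tfrac{-1-\sqrt p}{2}$, and $\sqrt p$ lies in the quadratic subfield of $\mathbb Q(\zeta_p)$, which forces $p\equiv 1\pmod 4$. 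A $\sigma_t$ fixes $\sqrt p$ exactly when $t$ is a quadratic residue modulo $p$, so $\eta_{ta}=\eta_a$ for $t$ a residue; applying a multiplier $x\mapsto cx$ (a circulant isomorphism) we may assume $\eta_a=r$ for $a$ a nonzero residue and $\eta_a=s$ otherwise. The classical quadratic Gauss sum evaluation gives, for $Q$ the set of nonzero squares mod $p$, $\sum_{x\in Q}\zeta_p^{ax}=\tfrac12\bigl(-1+\bigl(\tfrac{a}{p}\bigr)\sqrt p\bigr)$ for $a\ne0$ and $\tfrac{p-1}{2}=k$ for $a=0$: exactly the values $\eta_a$, for every $a\in\mathbb Z_p$. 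Since a subset of $\mathbb Z_p$ is determined by its vector of character sums, $S=Q$, i.e. $\Gamma=\mathrm{Cay}(\mathbb Z_p,Q)=P(p)$ with $p\equiv 1\pmod 4$. $\square$
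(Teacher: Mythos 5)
The paper does not prove this theorem: it is quoted from Bridges--Mena, Hughes--van Lint--Wilson and Ma, so there is no in-paper argument to compare yours against. Judged on its own, your proposal has the right architecture and a fully correct endgame, but the pivotal step is not actually carried out.

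What is correct and complete: the identification of the eigenvalues with the character sums $\eta_a=\sum_{x\in S}\zeta_n^{ax}$, the Galois action partitioning $\mathbb Z_n\setminus\{0\}$ into $D_r\cup D_s$, and the entire analysis for $n=p$ prime. There the transitivity of $(\mathbb Z/p)^{\times}$ forces $r,s$ irrational, hence a conference graph; $\sqrt p$ lying in the quadratic subfield of $\mathbb Q(\zeta_p)$ forces $p\equiv 1\pmod 4$; $D_r$ must be one of the two orbits of the squares, a multiplier normalises it to $Q$; and the Gauss-sum evaluation plus Fourier inversion gives $S=Q$. This part is a genuine proof. (A small caveat at the start: as literally stated the theorem fails for imprimitive strongly regular circulants such as $C_4=K_{2,2}$; you implicitly restrict to the primitive case, which is the intended reading, but you should say so rather than argue that ``no Paley graph occurs''.)

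The gap is the reduction to $n$ prime, which you yourself flag as ``where the real effort lies.'' Your primary route --- the Bridges--Mena style interplay between $S$ and the subgroups $H\le\mathbb Z_n$ --- is described only as producing ``stringent counting and divisibility relations'' that ``leave no consistent configuration''; no such relation is written down and no contradiction is derived, so nothing is proved. Your fallback is sound: Schur's classical theorem (see Wielandt, \emph{Finite Permutation Groups}, \S 25) that every primitive S-ring over a cyclic group of composite order is trivial does close the gap, because a primitive strongly regular Cayley graph over $\mathbb Z_n$ gives $\langle e\rangle,\ \underline{S},\ \underline{\mathbb Z_n\setminus(S\cup\{0\})}$ as a rank-$3$ primitive S-ring (the S-ring axioms are exactly the relation $\underline{S}\cdot\underline{S}=k\underline{e}+\lambda\underline{S}+\mu\,\underline{S'}$, and primitivity is connectedness of $\Gamma$ and $\overline\Gamma$). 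If you promote that citation from an aside to the actual argument, with the precise statement and the verification just indicated, the proof is complete; as written, the central claim rests on an unexecuted computation.
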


It was proved in \cite{MP03} that Paley graphs on a prime number of vertices are also the only non-trivial distance-regular circulants. Strongly regular Cayley graphs over $\mathbb{Z}_{p^n} \times \mathbb{Z}_{p^n}$ were studied in \cite{LM05}. Distance-regular Cayley graphs over dihedral and abelian groops were studied in \cite{MP07} and \cite{MS14}, respectively. In this paper we begin studying strictly Deza circulants.

\subsection{Preliminaries}
Let $\Gamma_1 = (V_1, E_1)$ and $\Gamma_2 = (V_2, E_2)$ be graphs.
The graph with the vertex set $V_1 \times V_2$, such that any two vertices $(u_1, u_2), (v_1,v_2)$ are adjacent
 if and only if either $u_1$ is adjacent to $v_1$ in $\Gamma_1$ or $u_1 = v_1$ and $u_2$ is adjacent to $v_2$ in $\Gamma_2$,
is called $\Gamma_2$-\emph{extension} of the graph $\Gamma_1$ and is denoted by $\Gamma_1[\Gamma_2]$. Note that the definition of $\Gamma_2$-\emph{extension} of the graph $\Gamma_1$ is equivalent to the lexicographical product of $\Gamma_1$ and $\Gamma_2$.
If $\Gamma_2$ is a $2$-clique $K_2$ then the graph $\Gamma_1[K_2]$ is called the $2$\emph{-clique-extension}
of the graph $\Gamma_1$.

Let $G$ be a group and $S$ be a generating set of $G$ such that $e_G \not\in S$ and $S^{-1} = S$ hold.
The graph with vertex set $G$, such that any vertices $x,y \in G$ are adjacent if and only if $xy^{-1} \in S$ holds, is called a
\emph{Cayley graph} of group $G$ \emph{with connection set} $S$ and is denoted by $Cay(G,S)$.
A graph that is isomorphic to a Cayley graph of a cyclic group is called a \emph{circulant}.

Let $q$ be a prime power, $q \equiv 1 (4)$. The Cayley graph of additive group $\mathbb{F}_q^+$ of the finite field $\mathbb{F}_q$,
whose generating set is the set of squares of the multiplicative group $\mathbb{F}_q^*$, is called the \emph{Paley graph} and
is denoted by $P(q)$. It is well-known that $P(q)$ is strongly regular with parameters $(q, \frac{q-1}{2}, \frac{q-5}{4}, \frac{q-1}{4})$.
Let $n$ be a positive integer, $n \equiv 1(4)$.
A strongly regular graph with parameters
$(n, \frac{n-1}{2}, \frac{n-5}{4}, \frac{n-1}{4})$ is called a \emph{conference graph}.

Let $V$ be a set of $n$ elements, ${\cal R}$ be a partition of $V\times V$ into $e+1$ binary
relations $R_0,R_1,\ldots,R_e$, which satisfy the following conditions:
\begin{itemize}
\item $R_0=\{(x,x)\mid x\in V\}$, the identity relation,
\item for any $i\in \{0,\ldots,e\}$, $R_i^{\top}=\{(y,x)\mid (x,y)\in R_i\}$ is a member of ${\cal R}$,
\item if $(x,y)\in R_k$, then the number of $z$ such that
$(x,z)\in R_i$ and $(z,y)\in R_j$
is a constant denoted by $p^k_{ij}$.
\end{itemize}
Then the pair $(V, {\cal R})$ is called an \emph{association scheme} of class $e$.
The numbers $p^k_{ij}$ are called the \emph{intersection numbers}.
An association scheme $(V,{\cal R})$ is called \emph{symmetric} if ${\cal R}$ consists of symmetric relations.
For an association scheme $(V,{\cal R})$ of class $e$, the numbers $n_0:=1, n_1 := p^1_{1,1}, \ldots, n_e := p^e_{e,e}$ are called the \emph{valencies}
of the scheme $(V,{\cal R})$. An association scheme $(V,{\cal R})$ of class $e$ is called \emph{pseudocyclic}, if
$n_1 = n_2 =   \ldots = n_e$ holds.

Let $q$ be a prime power, and $e$ be a divisor of $q-1$.
Fix a primitive element $\alpha$ of the the finite field
$\mathbb{F}_q$. Note that $\langle\alpha^e\rangle$ is a subgroup of $\mathbb{F}_q^*$ of index $e$
and its cosets are $\alpha^i\langle \alpha^e\rangle$, $(0\leq i\leq e-1)$.
Let us define the relations $R_0:=\{(x,x)\mid x\in \mathbb{F}_q\}$ and
$R_i:=\{(x,y)\mid x-y\in \alpha^i\langle \alpha^e\rangle, x,y\in \mathbb{F}_q\}~~(1\leq i\leq e).$
Then $(V,{\cal R}):=(\mathbb{F}_q, \{R_i\}_{i=0}^e)$ forms a pseudocyclic association scheme;
it is called the \emph{cyclotomic scheme} of class $e$ on $\mathbb{F}_q$.

\begin{lemma}\label{SymCyclScheme}
The cyclotomic scheme of class $e$ on $\mathbb{F}_q$ is symmetric if and only
if $q$ or $(q-1)/e$ is even.
\end{lemma}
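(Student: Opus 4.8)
The plan is to reduce symmetry of the whole scheme to a single condition, namely $-1\in\langle\alpha^e\rangle$, and then split into the two parity cases for $q$.

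First I would observe that the relation $R_i$ is symmetric precisely when $-\bigl(\alpha^i\langle\alpha^e\rangle\bigr)=\alpha^i\langle\alpha^e\rangle$, equivalently $-\langle\alpha^e\rangle=\langle\alpha^e\rangle$, equivalently $-1\in\langle\alpha^e\rangle$. Since this condition does not depend on $i$, the scheme $(\mathbb{F}_q,\{R_i\}_{i=0}^e)$ is symmetric if and only if $-1$ is an $e$-th power in $\mathbb{F}_q^{*}$ (note that $\langle\alpha^e\rangle$ is exactly the group of $e$-th powers, of order $(q-1)/e$ because $e\mid q-1$). This reduction is immediate from the definition of the relations $R_i$ and of symmetry of an association scheme.

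Next I would handle the two cases. If $q$ is even, then $-1=1\in\langle\alpha^e\rangle$, so the scheme is symmetric, and the right-hand side of the claimed equivalence is satisfied as well since $q$ is even. If $q$ is odd, then $\mathbb{F}_q^{*}$ is cyclic of even order $q-1$, so $-1$ is its unique element of order $2$, namely $\alpha^{(q-1)/2}$; and $\langle\alpha^e\rangle$ consists exactly of the powers $\alpha^{k}$ with $e\mid k$. Hence $\alpha^{(q-1)/2}\in\langle\alpha^e\rangle$ iff $e\mid (q-1)/2$, which, since $(q-1)/e$ is already an integer, is equivalent to $(q-1)/e$ being even. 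Combining the two cases yields the stated equivalence.

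The argument is essentially self-contained, so the only point needing care is checking that the two phrasings of the final condition genuinely agree: when $q$ is even the scheme is symmetric irrespective of the parity of $(q-1)/e$, and this is consistent with the formulation ``$q$ or $(q-1)/e$ is even'' because the first alternative already holds; when $q$ is odd the ``or'' collapses to ``$(q-1)/e$ is even''. Thus the main (mild) obstacle is the bookkeeping of these parity cases rather than any substantive difficulty.
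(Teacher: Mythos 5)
Your proof is correct, and it is precisely the argument the paper leaves implicit: the paper's own ``proof'' is the single line ``It follows from the definition of the cyclotomic scheme,'' and your reduction to $-1\in\langle\alpha^e\rangle$ followed by the parity case split on $q$ is exactly the computation being alluded to.
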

\begin{proof}
It follows from the definition of the cyclotomic scheme. $\square$
\end{proof}

\begin{lemma}[{\cite[Lemma 10.3.4]{GR01}}]\label{SRGpvertices}
Let $\Gamma$ be a strongly regular graph with $p$ vertices, where $p$ is prime. Then $\Gamma$ is a conference graph.
\end{lemma}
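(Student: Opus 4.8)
The plan is to invoke the familiar dichotomy for the non-principal eigenvalues of a strongly regular graph and to show that one of its two branches is incompatible with a prime number of vertices. Write the parameters of $\Gamma$ as $(p,k,\lambda,\mu)$ and let $r>s$ be its non-principal eigenvalues, the roots of $x^{2}-(\lambda-\mu)x-(k-\mu)=0$. If the discriminant $(\lambda-\mu)^{2}+4(k-\mu)$ is not a perfect square, then $r,s$ are irrational conjugates over $\mathbb{Q}$ and hence occur with equal multiplicity; since the two multiplicities sum to $p-1$, each equals $(p-1)/2$, and feeding this into the trace relation $k+\tfrac{p-1}{2}(r+s)=0$ (together with $r+s=\lambda-\mu$ and $rs=\mu-k$) a short computation pins the parameters down to $\bigl(p,\tfrac{p-1}{2},\tfrac{p-5}{4},\tfrac{p-1}{4}\bigr)$, so $\Gamma$ is a conference graph by definition. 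Thus the real content of the lemma is to rule out the other branch, where $r,s$ are integers (which they must be when the discriminant is a square, being rational algebraic integers), after discarding the trivial graphs $K_{p}$ and $\overline{K_{p}}$, which are clearly not conference graphs and are meant to be excluded from the statement.

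So for the integer branch I would assume $\Gamma$ is neither $K_p$ nor $\overline{K_p}$, hence connected, non-complete and non-empty --- which, as noted, loses no generality --- so that $1\le\mu\le k\le p-2$, $r\ge 0$, and $-k\le s\le -1$. The driving identity is
$$p\mu=(k-r)(k-s),$$
obtained by combining the counting relation $k(k-\lambda-1)=(p-k-1)\mu$ with $r+s=\lambda-\mu$ and $rs=\mu-k$. Since $p$ is prime it divides $k-r$ or $k-s$; but $0<k-r\le k\le p-2<p$ rules out $p\mid (k-r)$, forcing $p\mid (k-s)$, and since $0<k-s\le 2k\le 2(p-2)<2p$ the only possibility is $k-s=p$.

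To finish I would feed $k-s=p$ back into $p\mu=(k-r)(k-s)$, getting $\mu=k-r$, and compare this with $\mu=k+rs$ to obtain $r(s+1)=0$. If $s=-1$ then $k=p-1$ and $\Gamma=K_{p}$, contrary to assumption; if $r=0$ then $\mu=k$, so $\Gamma$ is a connected complete multipartite graph $K_{m\times a}$ with $m,a\ge 2$, making $p=ma$ composite. Both are contradictions, so the integer branch cannot occur and $\Gamma$ is a conference graph.

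I expect the subtleties to be organisational rather than computational. One must set up the degenerate cases carefully at the outset --- stipulating that $\Gamma$ is connected, non-complete and non-empty, since otherwise $K_{p}$ and $\overline{K_{p}}$ masquerade as spurious exceptions --- and keep at hand the two elementary inequalities $\mu\le k$ (two non-adjacent vertices have at most $k$ common neighbours) and $|s|\le k$ (eigenvalues of a $k$-regular graph lie in $[-k,k]$), since these are exactly what make the divisibility step run. Beyond that it is routine manipulation of the standard strongly regular parameter identities, and in passing the argument yields a little more: $\Gamma$ necessarily has parameters $\bigl(p,\tfrac{p-1}{2},\tfrac{p-5}{4},\tfrac{p-1}{4}\bigr)$ with $p\equiv 1\pmod 4$.
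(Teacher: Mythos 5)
The paper does not prove this lemma at all --- it is quoted verbatim from Godsil and Royle \cite[Lemma~10.3.4]{GR01} --- so there is no in-paper argument to compare against. Your proof is correct and is essentially the standard one from that source: split on whether the non-principal eigenvalues are irrational (the ``half case'', where equal multiplicities force the conference parameters $\bigl(p,\tfrac{p-1}{2},\tfrac{p-5}{4},\tfrac{p-1}{4}\bigr)$) or integral, and kill the integral case via $p\mu=(k-r)(k-s)$ together with the bounds $0<k-r<p$ and $0<k-s<2p$, which force $k-s=p$ and hence $r(s+1)=0$, landing in either $K_p$ or a complete multipartite graph on a composite number of vertices. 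The identity $(k-r)(k-s)=n\mu$ and the degenerate-case bookkeeping (excluding $K_p$ and $\overline{K_p}$, which the lemma implicitly does since neither is a conference graph) all check out.
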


\begin{lemma}[\cite{H84a}]\label{PseudoCyclic}
Let $(V,{\cal R})$ be a cyclotomic scheme of class $3$ on $\mathbb{F}_q$, where $q = 3t+1$.
Then the following statements hold.

{\rm (1)} The intersection numbers can be expressed in numbers $s,r,t$ as follows:\\
\begin{tabular}{cccc}
  \hline
  $p_{ij}^1$ & {\rm 1} & {\rm 2} & {\rm 3} \\
  \hline
  {\rm 1} & $t-r-s-1$ & $s$ & $r$ \\
  {\rm 2} & $s$ & $r$ & $t-r-s$ \\
  {\rm 3} & $r$ & $t-r-s$ & $s$ \\
  \hline
\end{tabular},\\
\begin{tabular}{cccc}
  \hline
  $p_{ij}^2$ & {\rm 1} & {\rm 2} & {\rm 3} \\
  \hline
  {\rm 1} & $s$ & $r$ & $t-r-s$ \\
  {\rm 2} & $r$ & $t-r-s-1$ & $s$ \\
  {\rm 3} & $t-r-s$ & $s$ & $r$ \\
  \hline
\end{tabular}, \\
\begin{tabular}{cccc}
  \hline
  $p_{ij}^3$ & {\rm 1} & {\rm 2} & {\rm 3} \\
  \hline
  {\rm 1} & $r$ & $t-r-s$ & $s$ \\
  {\rm 2} & $t-r-s$ & $s$ & $r$ \\
  {\rm 3} & $s$ & $r$ & $t-r-s-1$ \\
  \hline
\end{tabular},\\~\\ where $r,s$ and $t$ satisfy the additional condition
\begin{equation}\label{add}
1+2(r+s)-3(r-s)^2 = (1+3(r+s) - 2t)^2.
\end{equation}

{\rm (2)} The intersection numbers are invariant under a cyclic shift of the relations: $p_{ij}^k=p_{i+1,j+1}^{k+1}$
(the indices are taken mod 3)
\end{lemma}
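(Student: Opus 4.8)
The plan is to derive both parts from two structural symmetries of the cyclotomic scheme --- a cyclic automorphism and pseudocyclicity --- and to extract the Diophantine relation \eqref{add} from the associativity of the Bose--Mesner algebra. Write $t=(q-1)/3$ and $C_i=\alpha^i\langle\alpha^3\rangle$, so $R_i$ is the relation ``$x-y\in C_i$'' and $C_3=\langle\alpha^3\rangle$. First I would observe that $\mu\colon x\mapsto\alpha x$ is an additive bijection of $\mathbb{F}_q$ with $\mu(C_i)=C_{i+1}$, indices read cyclically on $\{1,2,3\}$ (so $C_3\mapsto C_1$); hence $(x,y)\in R_i$ iff $(\alpha x,\alpha y)\in R_{i+1}$, so $\mu$ is a permutation of $\mathbb{F}_q$ under which the relation $R_i$ corresponds to $R_{i+1}$. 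Applying $\mu$ to the vertices $z$ witnessing $p_{ij}^k$ gives a bijection onto those witnessing $p_{i+1,j+1}^{k+1}$; this is exactly statement (2), and it reduces (1) to determining the single array $p^1$.

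\textbf{Shape of the arrays.} Next I would note that, by Lemma~\ref{SymCyclScheme}, the scheme is symmetric (since $q=3t+1$, at least one of $q,t$ is even), hence commutative, so $p_{ij}^k=p_{ji}^k$; and pseudocyclicity gives $n_1=n_2=n_3=t$, so counting triples $(x,z,y)$ with $(x,z)\in R_i$, $(z,y)\in R_j$, $(x,y)\in R_k$ in two ways yields the standard identity $n_k\,p_{ij}^k=n_i\,p_{jk}^i$, i.e. $p_{ij}^k=p_{jk}^i$. Since a transposition and a $3$-cycle generate $S_3$, these two relations show $p_{ij}^k$ depends only on the multiset $\{i,j,k\}\subseteq\{1,2,3\}$; combining this with the cyclic shift above, the ten such multisets collapse to four orbits, with representatives $\{1,1,1\},\{1,1,2\},\{1,1,3\},\{1,2,3\}$. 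I would set $r:=p_{13}^1$ and $s:=p_{12}^1$ (so also $p_{11}^2=s$, $p_{11}^3=r$, $p_{22}^1=r$, and $p_{12}^3$ is the remaining value). The count $\sum_{j=0}^3 p_{ij}^k=n_i=t$ together with $p_{i0}^k=[i=k]$ gives $\sum_{j=1}^3 p_{ij}^k=t-[i=k]$; taking $(i,k)=(1,1)$ and $(i,k)=(2,1)$ (and using $p_{21}^1=s$, $p_{22}^1=r$) yields $p_{11}^1=t-r-s-1$ and $p_{12}^3=t-r-s$. Writing out every cell of $p^1$ with these four values, then cyclically shifting to get $p^2,p^3$, should reproduce the three displayed tables exactly.

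\textbf{The relation \eqref{add}.} Finally, for the constraint on $r,s,t$: let $A_0=I,A_1,A_2,A_3$ be the $0$--$1$ matrices of $R_0,\dots,R_3$, spanning the (associative) Bose--Mesner algebra with $A_iA_j=\sum_k p_{ij}^kA_k$. I would expand both sides of $(A_1A_1)A_2=A_1(A_1A_2)$ using the structure constants found above and equate the coefficients of $A_2$ (the coefficient of $A_3$ gives the same relation); after substituting $p_{11}^1=t-r-s-1$, $p_{12}^1=p_{33}^1=s$, $p_{13}^1=p_{22}^1=r$, $p_{23}^1=t-r-s$ and simplifying, this should collapse to a single polynomial identity in $r,s,t$ that rearranges to $1+2(r+s)-3(r-s)^2=(1+3(r+s)-2t)^2$. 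Since the algebra really is associative the identity holds, and it is \eqref{add}.

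\textbf{Main obstacle.} Statement (2) and the shape of the arrays are routine bookkeeping with standard scheme identities; the only genuine computation is the expansion in the last step, and the one point requiring care is simply to push it far enough for the symmetric form \eqref{add} to appear. (An alternative, less self-contained route identifies each $p_{ij}^k$ with a cyclotomic number of order $3$ and invokes Gauss's classical evaluation of these, under which \eqref{add} is the representation $4q=L^2+27M^2$ rewritten in $r,s,t$.)
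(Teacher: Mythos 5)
The paper does not prove this lemma at all --- it is imported verbatim from Hollmann's paper \cite{H84a} --- so your proposal is not competing with an in-paper argument but supplying one, and it is correct. The reduction of part (1) to four parameters is sound: symmetry of the scheme (guaranteed by Lemma~\ref{SymCyclScheme}, since $q=3t+1$ forces $q$ or $t$ to be even) gives $p_{ij}^k=p_{ji}^k$, pseudocyclicity gives $n_kp_{ij}^k=n_ip_{jk}^i$ and hence $p_{ij}^k=p_{jk}^i$, and these do generate full $S_3$-invariance, so $p_{ij}^k$ depends only on the multiset $\{i,j,k\}$; together with the shift from (2) (which is exactly the automorphism $x\mapsto\alpha x$ carrying $R_i$ to $R_{i+1}$) the ten multisets fall into the four orbits you name, and the row-sum identities pin down $p_{11}^1=t-r-s-1$ and $p_{23}^1=t-r-s$, reproducing the tables. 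For the one step you leave as ``should collapse,'' I checked it: with $A_1^2=tA_0+(t-r-s-1)A_1+sA_2+rA_3$ and $A_1A_2=sA_1+rA_2+(t-r-s)A_3$, comparing the $A_3$-coefficients of $(A_1A_1)A_2$ and $A_1(A_1A_2)$ gives $(t-r-s-1)(t-r-s)+r^2+s^2=rs+(r+s)(t-r-s)$, which, writing $\sigma=r+s$, rearranges to $t^2-3t\sigma+3\sigma^2-t+\sigma-3rs=0$ --- precisely the expanded form of \eqref{add}; the $A_2$-coefficients give the negative of the same equation, as you predicted. This makes transparent that \eqref{add} is a purely combinatorial consequence of associativity for any symmetric pseudocyclic $3$-class scheme with the shift property (the number theory only enters later, in Lemma~\ref{Reformulation} and Lemma~\ref{DiophantineEquation}), whereas your alternative route through Gauss's cyclotomic numbers of order $3$ is the one closer to the classical literature. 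Either way the argument is complete.
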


\begin{lemma}[{\cite[Lemma 3.5]{H84a}}]\label{Reformulation}
The additional condition (\ref{add}) from Lemma \ref{PseudoCyclic} is equivalent to the equation

\begin{equation}\label{addEq}
L^2 + 27M^2 = 4q,
\end{equation}

where $L = 6t-2-9(r+s), M = r - s$  and $q = 3t+1$ hold. The equation (\ref{addEq}) always has a unique solution in integers $L, M$ (apart from the signs).

\end{lemma}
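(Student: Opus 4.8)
The plan is to split the lemma into an algebraic equivalence and a number-theoretic existence/uniqueness statement. For the first, I would verify by direct substitution that, with $L = 6t-2-9(r+s)$, $M = r-s$ and $q = 3t+1$, the condition (\ref{add}) of Lemma \ref{PseudoCyclic} becomes (\ref{addEq}); for the second, I would prove that $L^2+27M^2 = 4q$ always admits a solution in integers and that it is unique up to the sign changes $(L,M)\mapsto(\pm L,\pm M)$.

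For the algebraic equivalence, set $u := r+s$, so (\ref{add}) reads $1+2u-3M^2 = (1+3u-2t)^2$. From $q = 3t+1$ we get $6t-2 = 2q-4$, hence $9u = 2q-4-L$ and $18u = 4q-8-2L$. Multiplying $1+3u-2t$ by $9$ and substituting gives $9(1+3u-2t) = 3(1-L)$, so the right side of (\ref{add}) equals $(1-L)^2/9$; multiplying the left side of (\ref{add}) by $9$ and substituting gives $1+4q-2L-27M^2$. Equating the two and cancelling $1-2L$ leaves exactly $L^2+27M^2 = 4q$. All steps are reversible (multiplication by the nonzero constant $9$, and polynomial expansion), so (\ref{add}) and (\ref{addEq}) are equivalent; this part is routine.

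For existence and uniqueness I would work in the ring of Eisenstein integers $\mathbb{Z}[\omega]$ with $\omega = e^{2\pi i/3}$ --- a principal ideal domain with six units --- and use $\sqrt{-3} = 1+2\omega$. Because $3\mid q-1$, the prime underlying $q$ is $\equiv 1\pmod 3$ and splits in $\mathbb{Z}[\omega]$. In the prime case $q = p$, write $p = a^2-ab+b^2 = \mathrm{N}(a+b\omega)$ for a Gaussian prime $a+b\omega$; a short case analysis on $(a,b)\bmod 3$, using $a^2-ab+b^2\equiv 1\pmod 3$, shows that exactly one of the associates $a+b\omega$, $\omega(a+b\omega)$, $\omega^2(a+b\omega)$ has $\omega$-coefficient divisible by $3$. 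Doubling that associate and reading off coordinates in the basis $\{1,\sqrt{-3}\}$ produces integers $L,M$ with $L^2+27M^2 = 4p$, which gives existence. For uniqueness, $2$ is inert in $\mathbb{Z}[\omega]$, so every element of norm $4q$ is a unit multiple of $2(a+b\omega)$ or of its conjugate; checking which of these twelve elements have the required shape $L+3M\sqrt{-3}$ leaves precisely the four elements $\pm L\pm 3M\sqrt{-3}$, which is the claimed uniqueness up to signs.

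The main obstacle is this uniqueness step, together with making the statement precise for prime powers $q = p^{k}$: there the factorizations $\pi^{j}\overline{\pi}^{\,k-j}$ of $q$ enter the count of norm-$4q$ elements, the normalization $L\equiv 1\pmod 3$ (which is exactly the divisibility $9\mid 2q-4-L$ appearing in the algebraic part) becomes essential, and ``uniqueness up to signs'' has to be read as the uniqueness of the representation determined by the cyclotomic scheme. I would therefore carry out the prime case directly as above and, for the full prime-power statement, combine it with the standard theory of the cubic Gaussian period polynomial --- whose discriminant forces precisely equation (\ref{addEq}) with that normalization --- or simply invoke the corresponding result from \cite{H84a}.
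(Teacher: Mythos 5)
The paper does not actually prove this lemma: it is imported verbatim from Hollmann \cite{H84a}, so there is no in-paper argument to compare yours against. Your first half is fine and is surely the routine computation behind the citation: with $u=r+s$ the identities $9u=2q-4-L$ and $3(1+3u-2t)=1-L$ do reduce (\ref{add}) to $L^2+27M^2=4q$, and every step is reversible.

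The second half has a genuine gap, and in fact the literal statement you set out to prove is false for general prime powers, so the plan cannot be completed as written. First, from $3\mid q-1$ with $q=p^h$ you cannot conclude $p\equiv 1\pmod 3$: for $q=4,16,25,64$ (all of which carry symmetric class-$3$ cyclotomic schemes) the underlying prime is inert in $\mathbb{Z}[\omega]$, the only solutions have $M=0$ and $L=\pm 2p^{h/2}$, and your splitting argument does not apply. Second, and more seriously, uniqueness up to signs already fails at $q=49$: both $(L,M)=(\pm 14,0)$ and $(\pm 13,\pm 1)$ satisfy $L^2+27M^2=196$, coming from the two ideal classes $2\pi\bar\pi$ and $2\pi^2$ of norm $4\cdot 49$. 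Hence the twelve-associate count that settles the prime case does not extend; the correct prime-power statement needs the Jacobi-sum normalization $p\nmid L$ (together with $L\equiv 1\pmod 3$), or else a restriction to $q$ prime --- which is all the survey actually uses, since Lemma \ref{DiophantineEquation} reduces to that case before uniqueness is invoked in Theorem \ref{FromCyclotomicScheme}. Note also that ``invoking the corresponding result from \cite{H84a}'' for the prime-power case is circular, since that is precisely the source whose Lemma 3.5 is being proved. Your Eisenstein-integer argument for $q$ prime is correct (modulo calling $a+b\omega$ an Eisenstein rather than Gaussian prime); to make the write-up sound you should either restrict the uniqueness claim to $q$ prime or add the missing normalization.
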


\begin{lemma}[\cite{C93}]\label{DiophantineEquation}
Let $m$ be an integer, $m \ge 3$. Then the following statements hold.

{\rm (1)} The diophantine equation $x^2+3 = y^m$ has no solutions;

{\rm (2)} The only solution of the diophantine equation $x^2+12 = y^m$ is $x = 2, y = 2, m = 4$.
\end{lemma}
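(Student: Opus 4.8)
\medskip
\noindent\emph{Proof plan.} Both equations have the shape $x^{2}+C=y^{m}$ with $C\in\{3,12\}$, and I would treat them by descent combined with factorisation in the ring of Eisenstein integers $\mathcal{O}=\mathbb{Z}[\omega]$, $\omega=\tfrac{-1+\sqrt{-3}}{2}$: this ring is Euclidean (hence a unique factorisation domain), its units are the six roots of unity $\{\pm1,\pm\omega,\pm\omega^{2}\}$, the rational prime $3$ ramifies as $3=-(\sqrt{-3})^{2}$, and the rational prime $2$ is inert. One may assume $x\geqslant0$ and $y\geqslant2$. Since an $m$-th power is a $p$-th power for each prime $p\mid m$ and a perfect square whenever $4\mid m$, it suffices to settle the cases $m=p$ (an odd prime) and $m=4$; the latter reduces at once to $(y^{2}-x)(y^{2}+x)=C$, which has no solution when $C=3$ and only $x=y=2$ when $C=12$.

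First I would clear the small primes. If $3\mid x$ then $v_{3}(x^{2}+C)=1$ while $v_{3}(y^{m})=m\,v_{3}(y)$ is $0$ or at least $m\geqslant3$, a contradiction; hence $3\nmid x$, so $3\nmid y$. For $C=3$, an odd $x$ gives $x^{2}+3\equiv4\pmod 8$, impossible for an $m$-th power with $m\geqslant3$; so $x$ is even and $y$ is odd. For $C=12$ with $x$ even, writing $x=2x_{1}$, $y=2y_{1}$ leads to $x_{1}^{2}+3=2^{m-2}y_{1}^{m}$, and following $2$-adic valuations (using $x_{1}^{2}+3\in\{3,4\}\pmod 8$) forces $x_{1},y_{1}$ odd and $m=4$, whence $(2y_{1}^{2}-x_{1})(2y_{1}^{2}+x_{1})=3$ gives the solution $x=y=2$. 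In the remaining cases ($C=3$ with $x$ even, $C=12$ with $x$ odd) the factors of $x^{2}+C=(x+c\sqrt{-3})(x-c\sqrt{-3})$, where $C=3c^{2}$, are coprime in $\mathcal{O}$: a common prime divisor would lie over $3$ (excluded since $3\nmid x$) or over $2$ (excluded since $y$ is odd here).

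In each coprime case, since $\mathcal{O}$ is a principal ideal domain one gets $x+c\sqrt{-3}=\varepsilon\gamma^{m}$ for a unit $\varepsilon$ and some $\gamma=\tfrac{a+b\sqrt{-3}}{2}\in\mathcal{O}$ with $a\equiv b\pmod 2$; as $m$ is an odd prime, a power of $\omega$ may be absorbed into $\gamma$ so that $\varepsilon=\pm1$. Subtracting the conjugate equation and setting $S_{m}:=\sum_{j=0}^{m-1}\gamma^{j}\bar\gamma^{m-1-j}$ gives
\[
2c\sqrt{-3}\;=\;\varepsilon(\gamma^{m}-\bar\gamma^{m})\;=\;\varepsilon(\gamma-\bar\gamma)S_{m}\;=\;\pm b\,S_{m}\sqrt{-3},
\]
hence $2c=\pm b\,S_{m}$. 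Here $S_{m}\in\mathbb{Z}$, being symmetric in $\gamma,\bar\gamma$ with $\gamma+\bar\gamma=a$ and $\gamma\bar\gamma=\tfrac{a^{2}+3b^{2}}{4}=y$ integers; indeed $(S_{m})$ is the Lucas sequence attached to $(a,y)$. Thus $b\mid 2c$ and $|S_{m}|=2c/|b|\leqslant4$. For $m=3$, where $S_{3}=a^{2}-y$, substituting $4y=a^{2}+3b^{2}$ turns $2c=\pm bS_{3}$ into $8c=\pm3b(a^{2}-b^{2})$, which is impossible modulo $3$; so $m=3$ is done for both parts.

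The main obstacle is the case $m=p\geqslant5$: one must rule out the Lucas value $S_{p}$ of the sequence attached to $(a,y)$, $y=\tfrac{a^{2}+3b^{2}}{4}\geqslant2$, from equalling $\pm2c/b$ with $|b|\leqslant2c\leqslant4$. I would finish this via the Bilu--Hanrot--Voutier theorem on primitive prime divisors of Lucas sequences: apart from finitely many degenerate $\gamma$ (those with $\gamma/\bar\gamma$ a root of unity, which are explicitly describable and for which $|S_{p}|$ is an easily computed product of powers of $2$ and $3$), the pair $(\gamma,\bar\gamma)$ is non-degenerate, so $S_{p}$ has a primitive prime divisor --- hence $|S_{p}|>4$ --- for every prime $p>30$; the degenerate cases and the finitely many primes $5\leqslant p\leqslant30$ are then checked directly, $|S_{p}|$ exceeding $4$ in each as soon as $p\geqslant3$. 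This elimination, where the hypothesis $m\geqslant3$ is genuinely used, is the only step beyond descent and bookkeeping.
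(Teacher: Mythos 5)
The paper does not prove this lemma at all: it is quoted verbatim from Cohn's paper \cite{C93} on $x^2+C=y^n$, so there is no internal argument to compare yours against. Judged on its own terms, your plan follows the classical route (factorisation in $\mathbb{Z}[\omega]$ plus Lucas-sequence divisibility), but it contains one concrete gap. The claim that ``as $m$ is an odd prime, a power of $\omega$ may be absorbed into $\gamma$ so that $\varepsilon=\pm1$'' is false precisely when $m=3$: the cube of every unit of $\mathbb{Z}[\omega]$ is $\pm1$, so $\omega$ and $\omega^2$ are \emph{not} cubes of units, and the unit $\varepsilon$ in $x+c\sqrt{-3}=\varepsilon\gamma^3$ genuinely ranges over three classes modulo cubes. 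Your computation $2c\sqrt{-3}=\varepsilon(\gamma-\bar\gamma)S_3$ uses $\bar\varepsilon=\varepsilon$, which holds only for $\varepsilon=\pm1$; for $\varepsilon=\omega^{\pm1}$ the conjugate equation carries $\omega^{\mp1}$ and the difference no longer factors through $\gamma-\bar\gamma$. Working it out, that subcase leads to a genuinely different norm-form equation (of the shape $a^3-3a^2b-9ab^2+3b^3=\pm16c$), and your ``impossible modulo $3$'' conclusion does not apply to it --- indeed reducing that equation mod $3$ gives $a^3\equiv\pm c\pmod 3$, which is satisfiable. So the exponent $m=3$, arguably the most delicate case of the lemma, is not actually disposed of.

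Two further points, less serious. First, the elimination of $5\le p\le 30$ and of the exceptional Lucas pairs via Bilu--Hanrot--Voutier is only asserted (``checked directly''), not performed; that is a finite but nontrivial verification against Voutier's tables, and it is where a written-out proof would spend most of its length. Second, BHV (2001) postdates Cohn (1993); Cohn's own treatment of these two values of $C$ is elementary, resting on ad hoc congruence and Jacobi-symbol lemmas about when a Lucas number can equal $\pm1$ or $\pm2$ rather than on primitive divisors. Your architecture is sound for $m=4$ and for primes $m\ge5$, but as written it does not yet constitute a proof of either part of the lemma because of the $m=3$ unit issue.
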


\subsection{Strictly Deza graphs from 3-class cyclotomic schemes}

In this section we prove the following theorem, which gives a construction of strictly Deza graphs from 3-class cyclotomic association schemes. This construction is based on Theorem \ref{ASUnion}.

\begin{theorem}\label{FromCyclotomicScheme}
Let $(V, {\cal R}) = (\mathbb{F}_q, \{R_0, R_1, R_2, R_3\})$ be the symmetric cyclotomic scheme of class $3$ over the finite field $\mathbb{F}_q$,
where $q$ is a prime power. For any $i \in \{1,2,3\}$,  denote by $\Gamma_i$ and $\overline{\Gamma_i}$ the graphs on the vertex set $V$
such that the adjacency relation of $\Gamma_i$ is $R_i$ and the adjacency relation of $\overline{\Gamma_i}$ is the union of the relations
$R_j$ and $R_k$, where $\{i, j, k\} = \{1,2,3\}$.
Then the following statements hold.

{\rm (1)} For any $i \in\{1,2,3\}$, $\Gamma_i = Cay(\mathbb{F}_{q}^+, \alpha^iS)$ and $\overline{\Gamma_i} =
Cay(\mathbb{F}_{q}^+, \mathbb{F}_q^*\setminus \alpha^iS)$ hold,
where $\alpha$ is a primitive element of $\mathbb{F}_q$ and $S = \langle\alpha^3\rangle$ is the subgroup of index $3$ in $\mathbb{F}_q^*$.

{\rm (2)} The graphs $\Gamma_1, \Gamma_2$ and $\Gamma_3$ are isomorphic;

{\rm (3)} The graphs $\overline{\Gamma_1}, \overline{\Gamma_2}$ and $\overline{\Gamma_3}$ are isomorphic;

{\rm (4)} The graph $\Gamma_1$ is a strictly Deza graph if and only if $q$ is prime and there exists an integer $x$ such that $q = x^2+3$;

{\rm (5)} The graph $\overline{\Gamma_3}$ is a strictly Deza graph if and only if $q$ is prime and
there exists an integer $x$ such that $q = x^2+12$.
\end{theorem}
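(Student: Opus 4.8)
\medskip\noindent\textbf{Proof proposal.}

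The plan is to settle (1)--(3) directly from the definitions and to convert (4)--(5) into a Diophantine question about the invariants $L,M$ of Lemma \ref{Reformulation}, resolved in turn by Lemmas \ref{PseudoCyclic}, \ref{Reformulation} and \ref{DiophantineEquation}. For (1): by construction $R_i$ joins the pairs whose difference lies in $\alpha^i\langle\alpha^3\rangle=\alpha^iS$, so $\Gamma_i=Cay(\mathbb{F}_q^+,\alpha^iS)$, and since $\{S,\alpha S,\alpha^2S\}$ partitions $\mathbb{F}_q^*$, the connection set of $\overline{\Gamma_i}$ is the union of the two cosets other than $\alpha^iS$, i.e.\ $\mathbb{F}_q^*\setminus\alpha^iS$. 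For (2) and (3): multiplication by $\alpha$ is an automorphism of $\mathbb{F}_q^+$ that carries $\alpha^iS$ to $\alpha^{i+1}S$ (indices mod $3$), hence $\mathbb{F}_q^*\setminus\alpha^iS$ to $\mathbb{F}_q^*\setminus\alpha^{i+1}S$; it therefore induces isomorphisms $\Gamma_i\cong\Gamma_{i+1}$ and $\overline{\Gamma_i}\cong\overline{\Gamma_{i+1}}$ (equivalently, this is the cyclic-shift symmetry of Lemma \ref{PseudoCyclic}(2)). In particular, for (4) (resp.\ (5)) it suffices to analyse the graph whose adjacency relation is $R_1$ (resp.\ $R_1\cup R_2$).

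For (4)--(5) I would feed the criterion of Theorem \ref{ASUnion} with the intersection numbers of Lemma \ref{PseudoCyclic} and then translate through Lemma \ref{Reformulation}. Put $q=3t+1$. For $\Gamma_1$ ($F=\{1\}$) the sum $\sum_{f,g\in F}p_{fg}^k=p_{11}^k$ ranges over $\{\,t-r-s-1,\ s,\ r\,\}$ as $k\in\{1,2,3\}$, so $\Gamma_1$ is Deza iff this list has at most two distinct values, and (reading off which pairs lie in $R_1$ versus $R_2\cup R_3$) $\Gamma_1$ is strongly regular iff $r=s$; hence $\Gamma_1$ is Deza but not strongly regular iff $r\neq s$ and $t-r-s-1\in\{r,s\}$. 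Substituting $t=(q-1)/3$, $r-s=M$ and $r+s=(2q-4-L)/9$, each of the two possibilities forces $L\in\{4-3M,\,4+3M\}$, and then $L^2+27M^2=4q$ gives $q=(3M\mp1)^2+3$, i.e.\ $q=x^2+3$. For $\overline{\Gamma_3}$ ($F=\{1,2\}$) the same computation gives $\sum_{f,g\in F}p_{fg}^k\in\{\,t+s-1,\ t+r-1,\ 2t-r-s\,\}$; the not-strongly-regular Deza condition becomes $r\neq s$ and $2t-r-s\in\{t+r-1,t+s-1\}$, which forces $L\in\{3M-8,\,-3M-8\}$, whence $q=(3M\mp2)^2+12$, i.e.\ $q=x^2+12$. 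For the converse, $q\equiv1\pmod 3$ forces $3\nmid x$, so from the residue of $x$ modulo $3$ one builds an integer pair $(L,M)$ with $L\equiv1\pmod 3$ and $L^2+27M^2=4q$; by the uniqueness clause of Lemma \ref{Reformulation} it must be the pair of the scheme, so the linear relation above holds for the true $r,s,t$, making $\Gamma_1$ (resp.\ $\overline{\Gamma_3}$) Deza, and not strongly regular because $M\neq0$ whenever $q\neq4$ (resp.\ $q\neq16$).

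It remains to insert the primality and diameter-$2$ conditions. If $q=p^m$ with $m\geq2$ and $q=x^2+3$, then Lemma \ref{DiophantineEquation}(1) forbids $m\geq3$, so $m=2$ and $(p-x)(p+x)=3$ gives $q=4$; if $q=p^m$ with $m\geq2$ and $q=x^2+12$, then Lemma \ref{DiophantineEquation}(2) forces $m=2$ (whence $(p-x)(p+x)=12$, which has no prime solution) or $q=16$. In each exceptional case $q$ is a perfect square, so $M=0$ and the graph is strongly regular, hence not strictly Deza; thus a strictly Deza graph here has $q$ prime. Conversely, for $q$ prime the connection set generates the cyclic group $\mathbb{Z}_q$, so the graph is connected; it is not complete, its valency $(q-1)/3$ (resp.\ $2(q-1)/3$) being less than $q-1$; and, being a Deza graph with positive smaller common-neighbour value $a$ (a short check from the $M$-expressions for $r,s$), it has diameter $2$, since a Deza graph of diameter $>2$ must have $a=0$. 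Hence it is strictly Deza. I expect the load-bearing step to be the middle paragraph: moving in both directions between the ``at most two distinct values'' condition and the shapes $x^2+3$, $x^2+12$, keeping the two mirror sub-cases (swapped by $M\mapsto-M$) and the sign of $L$ (pinned down by $L\equiv1\pmod3$) consistent, and running the converse through the uniqueness in Lemma \ref{Reformulation}; after that, the primality restriction is a quick consequence of Lemma \ref{DiophantineEquation}.
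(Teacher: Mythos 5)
Your argument follows essentially the same route as the paper's: parts (1)--(3) are handled identically, and for (4)--(5) you make the same reduction of the ``at most two values'' criterion of Theorem \ref{ASUnion} to the $(L,M)$-parametrization of Lemma \ref{Reformulation}, run the converse through the uniqueness of the solution of $L^2+27M^2=4q$ (your normalisation $L\equiv 1\pmod 3$ is just a compressed form of the paper's explicit four-case sign elimination), and invoke Lemma \ref{DiophantineEquation} to force $q$ prime. The one place you go beyond the paper is in checking the diameter-$2$ requirement in the definition of ``strictly Deza'', and there your parenthetical claim that $a>0$ fails in the smallest case: for $q=7=2^2+3$ one gets $\{r,s,t-r-s-1\}=\{0,1,0\}$, so $\Gamma_1$ is the $7$-cycle, a $(7,2,1,0)$ Deza graph of diameter $3$, hence not strictly Deza. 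The paper's proof omits the diameter check entirely and so misses this case as well; your instinct to verify it was correct, but the ``short check'' you defer does not go through uniformly, so the statement needs $q=7$ excluded (or the check restricted to $q\geq 19$, where $r,s>0$ indeed holds and the bound $2k\geq n-1$ or a direct argument gives diameter $2$). For $\overline{\Gamma_3}$ the valency $2(q-1)/3$ makes diameter $2$ automatic, so part (5) is unaffected.
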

\begin{proof}
(1) Let us show that $S = -S$ holds, where $S$ is the subgroup of index $3$ in $\mathbb{F}_q^*$. It is enough to prove that
$S$ contains $-1$. If $q$ is even, then $-1 = 1 \in S$. Assume that $q$ is odd.
Since $3$ divides $(q-1)$, $3$ divides $(q-1)/2$.
So, we have $-1  = \alpha^\frac{q-1}{2} \in S$. This implies that $-\alpha S = \alpha S$ and $-\alpha^2 S = \alpha^2 S$.
Now, the equalities $\Gamma_i = Cay(\mathbb{F}_q^+, \alpha^i S)$, $i \in \{1,2,3\}$ follow from the definitions of a Cayley graph and a cyclotomic scheme.

(2) It can be proved by definition that the mapping $x \rightarrow \alpha^{j-i}x$ is an isomorphism between the graphs
$\Gamma_i$ and $\Gamma_j$, where $i,j \in \{1,2,3\}$.

(3) It follows from item (2) and the fact that $\overline{\Gamma_i}$ is the complement of $\Gamma_i$.

(4)
Suppose that the graph $\Gamma_1$ is a Deza graph.
Then the set of intersection numbers $\{p_{11}^1, p_{11}^2, p_{11}^3\} = \{t-r-s-1, s, r\}$ of the graph $\Gamma_1$ has the cardinality at most $2$.
So, there are three possible cases.

If $s = p_{11}^2 = p_{11}^3 = r$ holds, then the graph $\Gamma_1$ would be strongly regular, which is a contradiction with Lemma \ref{SRGpvertices}.

If $t-s-r-1 = p_{11}^1 = p_{11}^3 = r$ holds, then we have $s = t - 2r - 1$.

Let us prove that $s \ne r$. Suppose to the contrary that $s = r$ holds. It follows from the equality $s = t - 2r - 1$
and Lemma \ref{Reformulation} that $t-3r = 1$, $M  = 0$ and $L = 6t-2-18r = 3(t-3r) - 2 = 1$.
Thus, $4q = L^2 + 27M^2 = 1$. A contradiction.

By Lemma \ref{Reformulation}, we obtain $M = 3r - t + 1$ and $L = 6t-2-9(r+t-2r-1)= -3t + 7 + 9r = 3(3r-t+1) + 4 = 3M+4$.
Now let us take into account the condition (\ref{addEq}) from Lemma \ref{Reformulation}. We obtain
$$4q = (3M+4)^2+27M^2 = 36M^2 + 24M + 16,$$
$$q = 9M^2 + 6M + 4 = (3M+1)^2 + 3.$$

The case $t-s-r-1 = p_{11}^1 = p_{11}^2 = s$ is analogous to the previous one.

Thus, if $\Gamma_1$ is a strictly Deza graph, then we have $p_{11}^2 \ne p_{11}^3$ and exactly one of the equalities
$p_{11}^1 = p_{11}^2$ and $p_{11}^1 = p_{11}^3$ holds. Moreover, there exists $x := (3M + 1)$ such that $q = x^2+3$,
where $M = r - s$.
It follows from Lemma \ref{DiophantineEquation}(1) that, if $p^h = q = x^2 +3$ holds for some integer $x$,
then $h \le 2$. If $h = 2$, then $x = 1, y = 2$ is the unique solution of the equation $y^h = x^2+3$.
So, we can assume that $q$ is prime.

Now, let us prove that, if $q$ is prime and there exists an integer $x$ such that $q = x^2 + 3$ holds, then $\Gamma_1$ is a strictly Deza graph. Let $q = p^h$ hold, where $p$ is prime.
Note that we can choose $x$ such that $x \equiv 1(3)$ holds since $x$ is not divided by 3 and $x$, $-x$ are not equivalent modulo 3.
Put $M_1 := \frac{x-1}{3}$. Note that $M_1 \ne 0$. So, we have $x = 3M_1 + 1$ and, consequently,
$$(3M_1+1)^2 = x^2 = q - 3,$$
$$9M_1^2 + 6M_1 + 4 = q,$$
$$36M_1^2 + 24M_1 + 16 = 4q,$$
$$9M_1^2 + 24M_1 + 16 + 27M_1^2= 4q,$$
$$(3M_1 + 4)^2 + 27M_1^2= 4q.$$
We obtain that the numbers $L_1:=3M_1+4$ and $M_1$ form a solution of the equation (\ref{addEq}); by Lemma \ref{Reformulation}, this solution,
apart from the signs of $M_1$ and $L_1$,
is uniquely determined.
So we have the following four possible cases:
\begin{enumerate}
    \item $M_1 = M = r - s$, $L_1 = L =  6t-2-9(r+s)$ and $3(r-s)+4 = 6t-2-9(r+s)$;

    \item $M_1 = - M = -r + s$, $L_1 = L =  6t-2-9(r+s)$ and $3(-r+s)+4 = 6t-2-9(r+s)$;

    \item $M_1 = M = r - s$, $L_1 = -L =  -6t+2+9(r+s)$ and $3(r-s)+4 = -6t+2+9(r+s)$;

    \item $M_1 = - M = -r + s$, $L_1 = -L =  -6t+2+9(r+s)$ and $3(-r+s)+4 = -6t+2+9(r+s)$.
\end{enumerate}
Let us consider these cases.

1. We obtain $t-2r-s-1 = 0$ and $p_{11}^3 = r = t-r-s-1 = p_{11}^1$. Since $p_{11}^3-p_{11}^2 = r - s = M = M_1 \ne 0$, we conclude that $\Gamma_1$ is a strictly Deza graph.

2. We obtain $t-r-2s-1 = 0$ and $p_{11}^2 = s = t-r-s-1 = p_{11}^1$. Since $p_{11}^3-p_{11}^2 = r - s = M = -M_1 \ne 0$, we conclude that $\Gamma_1$ is a strictly Deza graph.

3. We obtain $6s - 3r - 3t = 1$, which is a contradiction since 3 divides the left part and does not divide the right one.

4. We obtain $6r - 3s - 3t = 1$, which is a contradiction since 3 divides the left part and does not divide the right one.

(5) Suppose that the graph $\overline{\Gamma_3}$ is a Deza graph.
Then the set of intersection numbers
$\{p_{11}^1+p_{12}^1+p_{21}^1+p_{22}^1, p_{11}^2+p_{12}^2+p_{21}^2+p_{22}^2, p_{11}^3+p_{12}^3+p_{21}^3+p_{22}^3\}
= \{t+s-1, t+r-1, 2t-r-s\}$ of the graph $\overline{\Gamma_3}$
has the cardinality at most $2$.
So, there are three possible cases.

The equality $t+s-1 = p_{11}^1+p_{12}^1+p_{21}^1+p_{22}^1 = p_{11}^2+p_{12}^2+p_{21}^2+p_{22}^2 = t+r-1$ is equivalent to $r = s$;
in this case the graph $\overline{\Gamma_3}$ would be strongly regular, which is a contradiction with Lemma \ref{SRGpvertices}.

If $t+r-1 = p_{11}^2+p_{12}^2+p_{21}^2+p_{22}^2 = p_{11}^3+p_{12}^3+p_{21}^3+p_{22}^3 = 2t-r-s$ holds, then we have $s = t - 2r+1$.
Note that $s \ne r$.
By Lemma \ref{Reformulation}, we obtain $M = 3r - t - 1$ and $L = 6t-2-9(r+t-2r+1)= -3t -11 + 9r = 3(3r-t-1) - 8 = 3M-8$.
Now let us take into account the condition (\ref{addEq}) from Lemma \ref{Reformulation}. We obtain
$$4q = (3M-8)^2+27M^2 = 36M^2 -48M + 64,$$
$$q = 9M^2 - 12M + 16 = (3M-2)^2 + 12.$$

The case $t+s-1 = p_{11}^1+p_{12}^1+p_{21}^1+p_{22}^1 = p_{11}^3+p_{12}^3+p_{21}^3+p_{22}^3 = 2t-r-s$ is analogous to the previous one.

Thus, if $\overline{\Gamma_3}$ is a strictly Deza graph, then we have
$p_{11}^1+p_{12}^1+p_{21}^1+p_{22}^1 \ne p_{11}^2+p_{12}^2+p_{21}^2+p_{22}^2$ and exactly one of the equalities
$p_{11}^1+p_{12}^1+p_{21}^1+p_{22}^1 = p_{11}^3+p_{12}^3+p_{21}^3+p_{22}^3$ and
$p_{11}^2+p_{12}^2+p_{21}^2+p_{22}^2 = p_{11}^3+p_{12}^3+p_{21}^3+p_{22}^3$ holds. Moreover, there exists $x := (3M - 2)$ such that $q = x^2+12$,
where $M = r-s$. It follows from Lemma \ref{DiophantineEquation}(2) that, if $p^h = q = x^2 + 12$ holds for some integer $x$,
then either $p = 2, n = 4, x = \pm2$; or $h \le 2$. If $h = 2$, then  $x = \pm2, y = 4$ is the unique solution of the equation $y^n = x^2+12$;
it can be shown that in this case $\overline{\Gamma_3}$ is the Clebsh graph, which is strongly regular.
So, we can assume that, if $q = x^2 + 12$ holds, then $q$ is prime.

Now, let us prove that, if $q$ is prime and there exists an integer $x$ such that $q = x^2 + 12$ holds, then $\overline{\Gamma_3}$ is a strictly Deza graph.
Let $q = p^h$ hold, where $p$ is prime.
Note that we can choose $x$ such that $x \equiv 1(3)$ holds since $x$ is not divided by 3 and $x$, $-x$ are not equivalent modulo 3.
Put $M_2 := \frac{x+2}{3}$. Note that $M_2 \ne 0$. So, we have $x = 3M_2 -2$ and, consequently,
$$(3M_2-2)^2 = x^2 = q - 12,$$
$$9M_2^2 - 12M_2 + 16 = q,$$
$$36M_2^2 - 48M_2 + 64 = 4q,$$
$$9M_2^2 - 48M_2 + 64 + 27M_2^2= 4q,$$
$$(3M_2 - 8)^2 + 27M_2^2= 4q.$$
We obtain that the numbers $L_2:=3M_2-8$ and $M_2$ form a solution of the equation (\ref{addEq}); by Lemma \ref{Reformulation}, this solution,
apart from the signs of $L_2$ and $M_2$,
is uniquely determined.
So we have the following four possible cases:
\begin{enumerate}
    \item $M_2 = M = r - s$, $L_1 = L =  6t-2-9(r+s)$ and $3(r-s)-8 = 6t-2-9(r+s)$;

    \item $M_2 = - M = -r + s$, $L_1 = L =  6t-2-9(r+s)$ and $3(-r+s)-8 = 6t-2-9(r+s)$;

    \item $M_2 = M = r - s$, $L_1 = -L =  -6t+2+9(r+s)$ and $3(r-s)-8 = -6t+2+9(r+s)$;

    \item $M_2 = - M = -r + s$, $L_1 = -L =  -6t+2+9(r+s)$ and $3(-r+s)-8 = -6t+2+9(r+s)$.
\end{enumerate}
Let us consider these cases.

1. We obtain $t - 2r -s+1 = 0$ and
$p_{11}^2+p_{12}^2+p_{21}^2+p_{22}^2 = t+r-1= 2t - r-s =
p_{11}^3+p_{12}^3+p_{21}^3+p_{22}^3$. Since $(p_{11}^2+p_{12}^2+p_{21}^2+p_{22}^2)-(p_{11}^1+p_{12}^1+p_{21}^1+p_{22}^1) = r - s = M = M_2 \ne 0$,
we conclude that $\overline{\Gamma_3}$ is a strictly Deza graph.

2. We obtain $t - r -2s+1 = 0$ and
$p_{11}^1+p_{12}^1+p_{21}^1+p_{22}^1 = t+s-1= 2t - r-s =
p_{11}^3+p_{12}^3+p_{21}^3+p_{22}^3$.
Since $(p_{11}^2+p_{12}^2+p_{21}^2+p_{22}^2)-(p_{11}^1+p_{12}^1+p_{21}^1+p_{22}^1) = r - s = M = -M_2 \ne 0$,
we conclude that $\overline{\Gamma_3}$ is a strictly Deza graph.

3. We obtain $6 +3r+6s - 3t=1$, which is a contradiction since 3 divides the left part and does not divide the right one.

4. We obtain $6 +6r+3s - 3t=1$,
which is a contradiction since 3 divides the left part and does not divide the right one.
$\square$

\end{proof}

\subsection{Strictly Deza circulants on $2p$ vertices}
In this section we study strictly Deza circulants on $2p$ vertices and show that such a graph is necessarily the 2-clique-extension $P(p)[K_2]$ of the Paley graph $P(p)$.

For any $t_1, t_2 \in \mathbb{Z}_{2p}$, denote by $\psi_{t_1, t_2}$ the mapping
that sends an element $x\in \mathbb{Z}_{2p}$ to $t_1x+t_2 \in \mathbb{Z}_{2p}$.

\begin{lemma}\label{AutLemma}
For a circulant $\Gamma = Cay(\mathbb{Z}_{2p}, S)$, the following conditions hold

{\rm (1)} $\psi_{-1,0} \in Aut(\Gamma)$;

{\rm (2)} $\{\psi_{1, t_2}~|~t_2 \in \mathbb{Z}_{2p}\} \le Aut(\Gamma)$;

{\rm (3)} Let $\psi$ be an automorphism of the graph $\Gamma$. Then, for any vertices $x,y \in \Gamma$,
the equality $|N(x,y)| = |N(\psi(x), \psi(y))|$ holds.
\end{lemma}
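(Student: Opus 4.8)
The plan is to verify the three items directly from the definition of the Cayley graph $\Gamma = Cay(\mathbb{Z}_{2p}, S)$, using only that the connection set satisfies $S = -S$ (this is the standing assumption $S^{-1} = S$ written additively). Recall that in $\Gamma$ two vertices $x, y$ are adjacent precisely when $x - y \in S$; every argument below is a matter of tracking what happens to the difference $x - y$ under the maps $\psi_{t_1,t_2}$.

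For item (1), apply $\psi_{-1,0}\colon x \mapsto -x$. Then $\psi_{-1,0}(x) - \psi_{-1,0}(y) = -(x-y)$, and since $S = -S$ we have $-(x-y) \in S$ if and only if $x - y \in S$. Hence $\psi_{-1,0}$ preserves adjacency and non-adjacency, and being a bijection of $\mathbb{Z}_{2p}$ it lies in $Aut(\Gamma)$. For item (2), the maps $\psi_{1,t_2}\colon x \mapsto x + t_2$ are translations, so $\psi_{1,t_2}(x) - \psi_{1,t_2}(y) = x - y$ is unchanged; thus each $\psi_{1,t_2}$ is an automorphism. Together with $\psi_{1,t_2}\circ\psi_{1,t_2'} = \psi_{1,\,t_2+t_2'}$ and $\psi_{1,0} = \mathrm{id}$, this shows $\{\psi_{1,t_2} \mid t_2 \in \mathbb{Z}_{2p}\}$ is a subgroup of $Aut(\Gamma)$ (indeed the regular subgroup underlying the circulant structure).

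For item (3), let $\psi \in Aut(\Gamma)$ be arbitrary. A vertex $z$ is a common neighbour of $x$ and $y$ exactly when $\psi(z)$ is a common neighbour of $\psi(x)$ and $\psi(y)$, since $\psi$ preserves adjacency in both directions; therefore $\psi$ restricts to a bijection from $N(x)\cap N(y)$ onto $N(\psi(x))\cap N(\psi(y))$, and taking cardinalities yields $|N(x,y)| = |N(\psi(x),\psi(y))|$.

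All three statements are immediate once the definitions are unwound, so there is no genuine obstacle here; the lemma is a bookkeeping step. The only point worth flagging is that the separation of item (1) from item (2) is deliberate: the antipodal map $\psi_{-1,0}$ is graph-preserving precisely because of the symmetry $S = -S$, whereas the translations in item (2) require nothing about $S$ at all.
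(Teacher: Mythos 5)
Your proof is correct and follows the same route as the paper, which simply notes that (1) follows from $S=-S$, (2) from the definition of a circulant, and (3) from the definition of an automorphism; you have merely written out the routine difference-tracking verification in full. No issues.
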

\begin{proof}
(1) It follows from the fact that $-S = S$.

(2) It follows from the definition of a circulant.

(3) It follows from the definition of an automorphism. $\square$
\end{proof}

\begin{proposition}\label{PaleyK2}
Let $\Gamma$ be the $2$-clique-extension of a conference graph $\Gamma_1$ with parameters $(n, \frac{n-1}{2}, \frac{n-5}{4}, \frac{n-1}{4})$.
The following statements hold.

{\rm (1)} $\Gamma$ is a strictly Deza graph with parameters $(2n, n, n-1, \frac{n-1}{2})$;

{\rm (2)} If $\Gamma_1$ is a Paley graph of order $q = p^m$,
where $p$ is prime, $m \ge 1$ and $q \equiv 1~(mod~4)$,
then $\Gamma_1[K_2]$ is isomorphic to $Cay(\mathbb{Z}_{2} \times \mathbb{F}_q^+, (\mathbb{Z}_{2} \times S_q) \cup \{(1,0)\})$,
where $S_q = \{x^2~|~x \in \mathbb{F}_{q}^*, x \ne 0\}$.
\end{proposition}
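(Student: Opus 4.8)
Both parts come down to unwinding definitions, so I will describe the bookkeeping rather than grind through it. For (1), observe first that a conference graph has $\lambda-\mu=\frac{n-5}{4}-\frac{n-1}{4}=-1$, so (1) is immediate from the earlier example asserting that $\Gamma[K_2]$ is a strictly Deza graph with parameters $(2n,2k+1,2k,2\mu)$ whenever $\Gamma$ is strongly regular with $\lambda-\mu=-1$: putting $(k,\mu)=(\frac{n-1}{2},\frac{n-1}{4})$ yields exactly $(2n,n,n-1,\frac{n-1}{2})$. If one prefers a self-contained proof, I would write the vertex set of $\Gamma=\Gamma_1[K_2]$ as $V_1\times\{0,1\}$, note that $(u,i)$ is adjacent to the $2\cdot\frac{n-1}{2}$ vertices $(w,\ell)$ with $w\sim_{\Gamma_1}u$ and to $(u,1-i)$, so $\Gamma$ is $n$-regular on $2n$ vertices, and then compute $|N(x,y)|$ for distinct $x,y$ by three cases: (a) $x=(u,0)$, $y=(u,1)$, with the $n-1$ common neighbours $(w,\ell)$, $w\sim_{\Gamma_1}u$; (b) $x=(u,i)$, $y=(v,j)$ with $u\sim_{\Gamma_1}v$, with common neighbours the $2\lambda$ vertices over $N_{\Gamma_1}(u)\cap N_{\Gamma_1}(v)$ plus $(u,1-i)$ and $(v,1-j)$, totalling $2\lambda+2=\frac{n-1}{2}$; (c) every other pair (over non-adjacent base vertices), with the $2\mu=\frac{n-1}{2}$ common neighbours over $N_{\Gamma_1}(u)\cap N_{\Gamma_1}(v)$. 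Thus $|N(x,y)|\in\{n-1,\frac{n-1}{2}\}$, two distinct values, so $\Gamma$ is a Deza graph with $b=n-1$, $a=\frac{n-1}{2}$; it has diameter $2$ since $\Gamma_1$ is connected of diameter $2$, and it is not strongly regular because cases (a) and (b) give adjacent pairs with different numbers of common neighbours. Hence $\Gamma$ is strictly Deza.

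For (2), the plan is to identify the vertex set $\mathbb{F}_q\times\{0,1\}$ of $\Gamma_1[K_2]$ with $\mathbb{Z}_2\times\mathbb{F}_q^+$ through $(x,i)\mapsto(i,x)$. First one checks that $T:=(\mathbb{Z}_2\times S_q)\cup\{(1,0)\}$ is an admissible connection set: $(0,0)\notin T$ because $0\notin S_q$ and $(1,0)\ne(0,0)$, and $T=-T$ because $q\equiv1\pmod{4}$ makes $-1$ a nonzero square, whence $-S_q=S_q$, so $-(i,s)=(i,-s)\in\mathbb{Z}_2\times S_q$ for $s\in S_q$ and $-(1,0)=(1,0)$. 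Then one unwinds adjacency: in $Cay(\mathbb{Z}_2\times\mathbb{F}_q^+,T)$ the vertices $(i,x)$ and $(j,y)$ are adjacent iff $(i-j,x-y)\in T$, that is, iff $x-y\in S_q$ (with $i,j$ unrestricted) or $i\ne j$ and $x=y$; since $\Gamma_1=P(q)=Cay(\mathbb{F}_q^+,S_q)$, this is precisely the condition ``$x\sim_{\Gamma_1}y$, or $x=y$ and $i\ne j$'' defining adjacency of $(x,i)$ and $(y,j)$ in $\Gamma_1[K_2]$. So $(x,i)\mapsto(i,x)$ is the desired isomorphism.

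There is no genuine obstacle in this proposition; the only steps that need a little care are the three-way case analysis for common neighbourhoods in (1) and the check in (2) that $T$ is symmetric and contains no identity, both of which rest on $-1$ being a square in $\mathbb{F}_q$ when $q\equiv1\pmod{4}$. Its role is to record an explicit circulant model of $P(p)[K_2]$---a circulant when $q=p$ is prime, since $\mathbb{Z}_2\times\mathbb{Z}_p\cong\mathbb{Z}_{2p}$---for use in the subsequent classification of strictly Deza circulants on $2p$ vertices.
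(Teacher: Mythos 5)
Your proof is correct and follows essentially the same route as the paper: the same three-case computation of common neighbourhoods (pairs over one base vertex giving $n-1$, pairs over adjacent base vertices giving $2\lambda+2=\frac{n-1}{2}$, pairs over non-adjacent base vertices giving $2\mu=\frac{n-1}{2}$) for part (1), and the same unwinding of the connection set $(\mathbb{Z}_2\times S_q)\cup\{(1,0)\}$ for part (2). Your explicit checks of diameter $2$, non-strong-regularity, and the symmetry of the connection set via $-1\in S_q$ are details the paper leaves implicit, and are welcome.
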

\begin{proof}

(1) There are two types of edges in $\Gamma$.
Let $\{x_1,x_2\}$ be an edge of the graph $\Gamma$ that extends a vertex $x'$ of $\Gamma_1$.
We consider $\{x_1,x_2\}$ as an arbitrary edge of the first type.
Note that $|N_\Gamma(x_1, x_2)| = |N_{\Gamma_1}(x')[K2]| = 2\frac{n-1}{2} = n-1$ holds.

Let $\{x_1,x_2\}$ and $\{y_1,y_2\}$ be edges of the graph $\Gamma$ that extend adjacent vertices $x'$ and $y'$ of $\Gamma_1$, respectively.
Note that by definition of the extension the pairs $\{x_1, y_1\}, \{x_1, y_2\}, \{x_2, y_1\}, \{x_2, y_2\}$
are edges in $\Gamma$.
We consider $\{x_1, y_1\}$ as an arbitrary edge of the second type.
Since $x',y'$ are adjacent in $\Gamma_1$, the equality $|N_{\Gamma_1}(x',y')| = \frac{n-5}{4}$ holds.
Then we have $|N_\Gamma(x_1, y_1)| = |N_{\Gamma_1}(x',y')[K2] \cup \{x_2, y_2\}| = 2\frac{n-5}{4} + 2 = \frac{n-1}{2}$.

Let $\{x_1,x_2\}$ and $\{y_1,y_2\}$ be edges of the graph $\Gamma$ that extend non-adjacent vertices $x'$ and $y'$ of $\Gamma_1$.
We consider $\{x_1, y_1\}$ as an arbitrary pair of non-adjacent vertices in $\Gamma$.
Since $x',y'$ are not adjacent in $\Gamma_1$, the equality $|N_{\Gamma_1}(x',y')| = \frac{n-1}{4}$ holds.
Then we have $|N_\Gamma(x_1, y_1)| = |N_{\Gamma_1}(x',y')[K2]| = 2\frac{n-1}{4} = \frac{n-1}{2}$.
Thus, $\Gamma$ is a coedge-regular strictly Deza graph.

(2) Let us consider the graph $Cay(\mathbb{Z}_{2} \times \mathbb{F}_q^+, (\mathbb{Z}_{2} \times S_q) \cup \{(1,0)\})$,
where $S_q = \{x^2~|~x \in \mathbb{F}_{q}^*, x \ne 0\}$. Note that the sets of vertices $\{0\}\times \mathbb{F}_q^+$
and $\{1\}\times \mathbb{F}_q^+$ induce two copies of the Paley graph of order $q$.
The element $(1,0) \in S$ connects the corresponding vertices of the two copies. Finally, by the definition of the set $S$,
if arbitrary vertices $(i, x), (i, y)$ are adjacent, then there are all possible edges between the sets
$\{(0, x), (1, x)\}$ and $\{(0, y), (1, y)\}$. Thus, the graph $Cay(\mathbb{Z}_{2} \times \mathbb{F}_q^+, (\mathbb{Z}_{2} \times S_q) \cup \{(1,0)\})$
is isomorphic to the 2-clique-extension of the Paley graph of order $q$. $\square$
\end{proof}

\begin{corollary}
The $2$-clique-extension $P(p)[K_2]$ of the Paley graph $P(p)$ is a strictly Deza circulant with parameters $(2p, p, p-1, \frac{p-1}{2})$.
\end{corollary}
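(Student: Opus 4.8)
The plan is to obtain the corollary as an immediate consequence of Proposition~\ref{PaleyK2}. First I would record that the Paley graph $P(p)$ on a prime number $p$ of vertices, which requires $p \equiv 1 \pmod{4}$ for $P(p)$ to be defined, is exactly a conference graph with parameters $(p, \frac{p-1}{2}, \frac{p-5}{4}, \frac{p-1}{4})$. Feeding $\Gamma_1 = P(p)$ and $n = p$ into Proposition~\ref{PaleyK2}(1) then shows that $P(p)[K_2]$ is a coedge-regular strictly Deza graph with parameters $(2p, p, p-1, \frac{p-1}{2})$, which already delivers the Deza-graph half of the claim together with the stated parameters.

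Next I would establish the ``circulant'' half. By Proposition~\ref{PaleyK2}(2) applied with $q = p$, we have
$$P(p)[K_2] \cong Cay\bigl(\mathbb{Z}_2 \times \mathbb{F}_p^+,\ (\mathbb{Z}_2 \times S_p) \cup \{(1,0)\}\bigr),$$
where $S_p = \{x^2 \mid x \in \mathbb{F}_p^*\}$. Since $p$ is prime, $\mathbb{F}_p^+$ is cyclic of order $p$; since $p$ is odd, $\gcd(2, p) = 1$, so by the Chinese Remainder Theorem $\mathbb{Z}_2 \times \mathbb{F}_p^+ \cong \mathbb{Z}_{2p}$. Transporting the connection set along this group isomorphism (it remains symmetric and identity-free, so the image is again a Cayley graph of the target group) exhibits $P(p)[K_2]$ as a Cayley graph over the cyclic group $\mathbb{Z}_{2p}$, i.e.\ as a circulant, necessarily with parameters $(2p, p, p-1, \frac{p-1}{2})$.

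I do not expect any genuine obstacle here: the substantive content is carried entirely by Proposition~\ref{PaleyK2}, and the only additional ingredient is the elementary fact that $\mathbb{Z}_2 \times \mathbb{Z}_p$ is cyclic for an odd prime $p$. If one wished to be fully explicit, the connection set on $\mathbb{Z}_{2p}$ can be written out: under the isomorphism $m \mapsto (m \bmod 2,\, m \bmod p)$ it becomes $\{m \in \mathbb{Z}_{2p} : (m \bmod p) \in S_p\} \cup \{p\}$, but this refinement is not needed for the statement.
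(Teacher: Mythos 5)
Your proposal is correct and follows exactly the paper's route: the paper's own proof of this corollary consists precisely of invoking Proposition~2 (the $2$-clique-extension of a conference graph) and noting that $\mathbb{F}_p^+$ is cyclic of order $p$, so the direct product with $\mathbb{Z}_2$ is cyclic of order $2p$. Your write-up merely makes explicit the transport of the connection set, which the paper leaves implicit.
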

\begin{proof}
For a prime $p$, the additive group $\mathbb{F}_p^+$ is a cyclic group of order $p$. The direct product of two cyclic groups of coprime order is a cyclic group. $\square$
\end{proof}

\medskip
The main result of this section is the following theorem.

\begin{theorem}\label{circ2p}
Let $\Gamma$ be a strictly Deza circulant of with parameters $(2p,k,b,a)$, where $p$ is prime. Then $p \equiv 1(4)$ holds, and $\Gamma$ is isomorphic
to the $2$-clique-extension $P(p)[K_2]$ of the Paley graph $P(p)$. In particular, $k = p,~b = p-1$ and $a = \frac{p-1}{2}$.
\end{theorem}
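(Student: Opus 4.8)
The plan is to let $\Gamma = Cay(\mathbb{Z}_{2p}, S)$ be a strictly Deza circulant on $2p$ vertices and to recover the structure $P(p)[K_2]$ from the arithmetic of the connection set $S \subseteq \mathbb{Z}_{2p}$. First I would record that $\mathbb{Z}_{2p} \cong \mathbb{Z}_2 \times \mathbb{Z}_p$ by the CRT, so every element of $\mathbb{Z}_{2p}$ is identified with a pair $(\epsilon, x)$ with $\epsilon \in \mathbb{Z}_2$, $x \in \mathbb{Z}_p$; write $S_0 = \{x : (0,x) \in S\}$ and $S_1 = \{x : (1,x) \in S\}$. Since $S = -S$ and $0 \notin S$, both $S_0$ and $S_1$ are symmetric subsets of $\mathbb{Z}_p$, with $0 \notin S_0$. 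Using Lemma~\ref{AutLemma}, the translations $\psi_{1,t_2}$ and the reflection $\psi_{-1,0}$ are automorphisms, and by item~(3) they preserve common-neighbourhood sizes; this vertex-transitivity means $\Gamma$ is edge-regular or coedge-regular only on the ``diagonal'' relation, so I will work relationally: for $d \in \mathbb{Z}_{2p}\setminus\{0\}$ the number $|N(0,d)|$ equals $|S \cap (S+d)|$, and the Deza condition says this takes at most two values (together with $k$ on the at-most-one special difference). The subgroup $H = \{(0,0),(1,0)\} \le \mathbb{Z}_{2p}$ of order $2$ is the key object: I expect the argument to force that the element $(1,0)$ lies in $S$ and that $S \setminus \{(1,0)\}$ is a union of $H$-cosets, i.e. $S_0 = S_1 =: T$, and then $\Gamma$ is exactly the $H$-extension of the graph on $\mathbb{Z}_p$ with connection set $T$, which is the $2$-clique-extension of $Cay(\mathbb{Z}_p, T)$.

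The heart of the matter is therefore the following two claims, which I would prove in order. \textbf{Claim 1:} $(1,0) \in S$, equivalently $0 \in S_1$. \textbf{Claim 2:} $S_0 = S_1$. For Claim 1, suppose $(1,0) \notin S$. Then for the difference $d = (1,0)$ we get $|N(0,(1,0))| = |S \cap (S + (1,0))| = |S_0 \cap S_1| \cdot 2$ (counting over both $\mathbb{Z}_2$-levels), and this is \emph{not} the valency since $(1,0)$ is non-adjacent to $0$; so $|S_0 \cap S_1|$ must be one of the two Deza values $a/2$ or $b/2$. Running the difference $d = (1,x)$ for varying $x$ gives $|N(0,(1,x))| = |S_0 \cap (S_1 + x)| + |S_1 \cap (S_0 + x)| = 2|S_0 \cap (S_1 + x)|$ by symmetry. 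Varying $x$ over $\mathbb{Z}_p$, the quantity $|S_0 \cap (S_1 + x)|$ is a nonnegative-integer-valued function whose sum over $x$ is $|S_0||S_1|$ and which, by the Deza property, takes at most two values; combined with the companion count on even differences $d = (0,x)$ giving $|S_0 \cap (S_0 + x)| + |S_1 \cap (S_1 + x)|$, I expect a counting/parity squeeze (in the spirit of the "at most two values, summing to a fixed total" arguments used for circulant strongly regular graphs, cf. Theorem~\ref{SRGCirculant} and Lemma~\ref{SRGpvertices}) to force $S_0 = S_1$ \emph{and} $0 \in S_1$ simultaneously. Once $S_0 = S_1 = T$ and $(1,0)\in S$, the graph is visibly $Cay(\mathbb{Z}_p, T)[K_2]$, and applying Proposition~\ref{PaleyK2}(1) the induced graph $Cay(\mathbb{Z}_p, T)$ must be a strongly regular circulant on $p$ vertices (because its $2$-clique-extension is strictly Deza, so $Cay(\mathbb{Z}_p,T)$ has exactly two common-neighbourhood values, i.e. is strongly regular or complete/empty, and the trivial cases are excluded by strictness). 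Then Lemma~\ref{SRGpvertices} (or Theorem~\ref{SRGCirculant}) identifies it as a conference graph, hence $p \equiv 1 \pmod 4$ and it is the Paley graph $P(p)$; the parameter formula $(2p, p, p-1, \frac{p-1}{2})$ follows from Proposition~\ref{PaleyK2}(1).

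The main obstacle is Claim 1 together with Claim 2 — i.e. showing that the connection set must respect the order-$2$ subgroup $H$. The edge-regular/coedge-regular dichotomy alone does not immediately give this; one really needs to exploit that there are only two distinct common-neighbourhood values across \emph{all} pairs, and to play off the even-difference counts against the odd-difference counts. A clean way to organize this is via the ring $\mathbb{Z}[\mathbb{Z}_{2p}]$: writing $\underline{S} = \sum_{s \in S} s$, the Deza condition is $\underline{S}\,\underline{S}^{(-1)} = k\cdot 1 + a\underline{A} + b\underline{B}$ with $\{1\}, A, B$ partitioning $\mathbb{Z}_{2p}$, and projecting to $\mathbb{Z}[\mathbb{Z}_2]$ (summing over the $\mathbb{Z}_p$-coordinate) collapses this to a $2$-term identity that pins down $|S_0|$ versus $|S_1|$; projecting to $\mathbb{Z}[\mathbb{Z}_p]$ instead shows $Cay(\mathbb{Z}_p, S_0 \cup S_1)$ (with multiplicity) is close to strongly regular. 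I would expect to need a short case analysis on whether $|S_0| = |S_1|$ or they differ by the contribution of $0$, and then the diophantine input (Lemmas~\ref{DiophantineEquation}, \ref{Reformulation}) is \emph{not} needed here — this is purely a structural/counting argument, unlike the previous section. The last mile, once the structure is in hand, is routine given Proposition~\ref{PaleyK2} and Lemma~\ref{SRGpvertices}.
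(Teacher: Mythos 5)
Your overall strategy coincides with the paper's: single out the order-$2$ subgroup $H=\{0,p\}\le\mathbb{Z}_{2p}$, show that $S$ is (up to the element $p$ itself) a union of $H$-cosets, pass to the quotient circulant on $p$ vertices, and invoke Theorem~\ref{SRGCirculant} to identify it with $P(p)$. But the two steps that carry the weight of the theorem are not actually proved. The first is the one you flag yourself: your Claims 1 and 2 (equivalently, in the paper's notation, that $S_2=\{s\in S : s\ne p,\ s+p\notin S\}$ is empty and that $p\in S$) are precisely the hard content, and ``a counting/parity squeeze in the spirit of the circulant SRG arguments'' is a hope, not an argument. The paper needs a chain of nontrivial lemmas here (Lemmas~\ref{ElemsOfDifftParityHaveEvenNumOfComNeigh}--\ref{S2IsEmpty}): pairs of vertices of different parity have an even number of common neighbours; $|S_4|$ is divisible by $4$; the vertices having an odd number of common neighbours with $0$ are exactly those of $2S_2$, hence there are $|S_2|$ of them; then two separate double counts of $2$-paths force first $|S'|=p$ (every odd residue lies in $S$) and then that the odd common-neighbour value equals $k$, after which the equivalence relation ``coincide or have $k$ common neighbours'' partitions the $2p$ vertices into classes of size $p-c_0/2$, and divisibility of $2p$ by an odd number gives the contradiction. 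Your group-ring projections do not obviously reproduce any of this: projecting to $\mathbb{Z}[\mathbb{Z}_2]$ only controls the sizes $|S_0|,|S_1|$, not the coset structure, and the analogy with strongly regular circulants breaks down because a Deza graph allows the common-neighbour value to vary independently of adjacency.

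The second gap is an outright error. Having reduced to $\Gamma=\Delta[K_2]$ with $\Delta=Cay(\mathbb{Z}_p,T)$, you infer that $\Delta$ ``has exactly two common-neighbourhood values, i.e.\ is strongly regular or complete/empty.'' Two common-neighbourhood values makes $\Delta$ a Deza circulant, not a strongly regular one, and strictly Deza circulants on a prime number of vertices do exist: the paper's own Theorem~\ref{FromCyclotomicScheme} produces them for every prime $q=x^2+3$ (e.g.\ $q=7,19,67$). So the inference is false as stated and must be replaced by an argument that the common-neighbour value in $\Delta$ depends only on adjacency. The paper obtains exactly this by first proving $\beta=1$ (Lemmas~\ref{OddVertInSHaveaComNeiWith0}--\ref{betaeq1}): the only vertex having $b$ common neighbours with $0$ is $p$, so every pair not of the form $\{x,x+p\}$ has exactly $a$ common neighbours, which makes the quotient $\Gamma'$ genuinely strongly regular with $\lambda'=(a-2)/2$ and $\mu'=a/2$. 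Establishing $\beta=1$ is itself substantial (it uses the clique-forcing induction of Lemma~\ref{OddVertInSHaveaComNeiWith0} and another divisibility-by-$2p$ argument) and is entirely absent from your proposal. The concluding appeal to Theorem~\ref{SRGCirculant} and Proposition~\ref{PaleyK2} is fine once those two pieces are supplied.
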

\begin{proof}
Consider a strictly Deza circulant $\Gamma = Cay(\mathbb{Z}_{2p}, S)$. Note that for elements of $\mathbb{Z}_{2p}$
the parity is well-defined. On our way, we prove several inner lemmas and then complete the proof of Theorem \ref{circ2p}.

\begin{lemma}\label{ElemsOfDifftParityHaveEvenNumOfComNeigh}
The following statements hold.

{\rm (1)} Let $x \in \mathbb{Z}_{2p}$ be an odd element. Then $|N_\Gamma(0,x)|$ is even.

{\rm (2)} Let $x\in \mathbb{Z}_{2p}$ be an odd element and $y\in \mathbb{Z}_{2p}$ be an even element. Then $|N(x,y)|$ is even.
\end{lemma}
\proof
(1) Let $s_1\in\mathbb{Z}_{2p}$ be a common neighbour of $0$ and $x$. So, the elements $s_1$ and $s_2:=x-s_1$ belong to $S = N(0)$.
Note, that $s_1 \ne s_2$ holds because $s_1$ and $s_2$ have different parity.
Since $x = s_1 + s_2$ holds, we have $x-s_2 = s_1 \in S$. Thus, $s_2$ belongs to $N(x)$, and we conclude that
$s_2$ is a common neighbour of the vertices $0$ and $x$. We obtain that each common neighbour $s_1$ of the vertices $0$ and $x$
gives another common neighbour $s_2$.
Note that the obtained pair of common neighbours $\{s_1, s_2\}$
is uniquely determined by each of the elements $s_1$ and $s_2$.
So, the set of common neighbours of the vertices $0$ and $x$ can be divided into disjoint pairs, and (1) is proved.

(2)
It follows from (1), Lemma \ref{AutLemma}(3) and from the fact that the automorphism $\psi_{1,-x} \in Aut(\Gamma)$ sends the pair
$(x,y)$ to the pair $(0,x-y)$, where $x-y$ is even. $\square$

\medskip

Consider the sets $S_4:= \{s \in S~|~s+p \in S\}$ and $S_2 := \{s \in S~|~ s \ne p,~s+p \not\in S\}$.
Note that $S$ is a disjoint union of $S_4$, $S_2$ and, maybe, the set $\{p\}$ (if $p \in S$ holds).

\begin{lemma}\label{S4IsSetOfComNei0andp}
The following statements hold.

{\rm (1)} $|S_4|$ is divided by $4$.

{\rm (2)} $S_4$ is the set of common neighbours of the vertices $0$ and $p$.
\end{lemma}
\proof
(1) Let us consider an element $s \in S_4$. By the definition of $S_4$, we have $s + p \in S_4$.
By the definition of a Cayley graph, we have $-s \in S$ and $-(s+p) = p - s \in S$. This means that
$-s$ and $p-s$ belong to $S_4$. Note that the four-element set $\{s, -s, p+s, p-s\}$ is uniquely determined by each of its elements. So, the set $S_4$ can be divided
into disjoint four-element sets, and (1) is proved.

(2) It follows from (1), that for an element $s \in S_4$ the elements $-s, p+s, p-s$ belong to $S_4$.
By the definition of a Cayley graph, the elements $s, -s, p+s, p-s$ are common neighbours of the vertices $0$ and $p$.
Thus, $S_4$ lies in the set of common neighbours of $0$ and $p$.

Now, let us take any element $s \in S_2$. By the definition of $S_2$, we have $p+s \not\in S$.
By the definition of a Cayley graph, we have $-(p+s) = p - s  \not\in S$. This means that $p$ is not adjacent to $s$, and
$s$ is not a common neighbour of the vertices $0$ and $p$. So, $0$ and $p$ have no common neighbours in $S_2$.
$\square$

\begin{lemma}\label{ComNeiOfSymVert}
For any vertex $y \in \mathbb{Z}_{2p}\setminus \{0,p\}$, the number $|N(-y, y)\setminus \{0,p\}|$ is even.
\end{lemma}
\proof
Let $i \in \mathbb{Z}_{2p}\setminus\{0,p\}$ be a common neighbour of the vertices $-y$ and $y$.
Then, by definition of Cayley graph, $i+y$ and $i-y$ belong to $S$.
Since $i+y = y - (-i)$ and $i-y = -y - (-i)$, each common neighbour $i$ of the vertices $y$ and $-y$, which is different from $0$ and $p$, gives one more common neighbour, namely $-i$.
This means that $|N(-y,y)\setminus \{0,p\}|$ is even.
$\square$

\begin{lemma}\label{ZeroAndElOfS2HaveOddNumOfComNei}
{\rm (1)} For a non-zero element $x \in \mathbb{Z}_{2p}$, the number $|N(0,x)|$ is odd if and only if
there exists an element $s \in S_2$ such that the equality $x = 2s$ holds (in other words, $x$ belongs to $2S_2$).

{\rm (2)} There are exactly $|S_2|$ vertices in $\Gamma$ having odd number of common neighbours with $0$.
\end{lemma}
\proof
(1) Let $x \in \mathbb{Z}_{2p}$ be a non-zero element such that $|N(0,x)|$ is odd.
By Lemma \ref{ElemsOfDifftParityHaveEvenNumOfComNeigh}, the element $x$ is even.
Let $y \in \mathbb{Z}_{2p}$ be an element such that $2y = x$ (note that there are only two such elements: $y$ and $y+p$). Then, by Lemma \ref{AutLemma}(3), the vertices $\psi_{1,-y}(0) = -y$ and $\psi_{1,-y}(x) = y$ have odd number of common neighbours.
By Lemma \ref{ComNeiOfSymVert}, the number $|N(-y,y)\setminus \{0,p\}|$ is even. Since $|N(-y,y)|$ is odd,
exactly one of the vertices $0$ and $p$ is a common
neighbour of the vertices $-y$ and $y$. Thus, exactly one of the vertices $y$ and $y+p$ belongs to $S$; denote this vertex by $s$.
Note, that $s \in S_2$ by the definition of $S_2$. Since $2y = 2(y+p) = x$, we have $x = 2s$.

Let us prove the converse. Note that, by Lemma \ref{AutLemma}(3), $|N(0,2s)| = |N(\psi_{1,-s}(0), N(\psi_{1,-s}(2s))| = |N(-s,s)|$ holds.
By Lemma \ref{ComNeiOfSymVert}, the number $|N(-s,s)\setminus \{0,p\}|$ is even. Since $s$ belongs to $S_2$,
we have $0 \in N(-s,s)$  and $p\not\in N(-s,s)$. Thus, $|N(-s,s)|$ is odd, and (1) is proved.

(2) It follows from (1), that exactly the vertices of the set $2S_2$ have odd number of common neighbours with the vertex $0$.
Note that, for any $i_1, i_2 \in \mathbb{Z}_{2p}$, the equality $2i_1 = 2i_2$ holds if and only if $i_2 \in \{i_1,i_1+p\}$.
By definition of $S_2$, we obtain the equality $|2S_2| = |S_2|$. $\square$

\begin{lemma}\label{S2IsEmpty}
The set $S_2$ is empty.
\end{lemma}
\proof Suppose to the contrary that $S_2$ is non-empty. By Lemma \ref{S4IsSetOfComNei0andp}(1), $|N(0,p)|$ is divided by $4$.
Put $c_0:=|N(0,p)| = |S_4|$. By Lemma \ref{ZeroAndElOfS2HaveOddNumOfComNei},
there exists at least one pair of vertices that have an odd number of common neighbours.
Denote this odd number by $c_1$. Thus, the equality $\{c_0, c_1\} = \{a,b\}$ holds.

Denote by $S'$ and $S''$ the sets of odd and even elements of $S$, respectively.
Let us count in two ways the number of paths of length $2$,
connecting $0$ with some odd vertex. On the one hand, this equals $pc_1$. On the other hand, each pair of an odd element $s_1 \in S$ and
an even element $s_2 \in S$ gives the two paths $0\sim s_1 \sim (s_1+s_2)$ and $0\sim s_2 \sim (s_1+s_2)$,
connecting $0$ with the odd vertex $s_1+s_2$.
Thus, the number of the paths of length $2$ is equal to the doubled number of the pairs of an odd element $s_1 \in S$ and
an even element $s_2 \in S$.
The number of such pairs is equal to $|S'||S''|$. We obtain the following equality
$$2|S'||S''| = pc_0.$$

So, $p$ divides $2|S'||S''|$. Since $|S''|$ is even and $|S''| < 2p$ holds,
we conclude that $p$ divides $|S'|$ (otherwise, $|S''| = 0$, which means that $\Gamma$ is bipartite and cannot be a strictly Deza graph). Consequently,
$|S'| \in \{0,p\}$ holds. If $|S'| = 0$, then $\Gamma$ is disconnected, a contradiction. Thus, we have $|S'| = p$
and, consequently, $|S''| = c_0/2$.
The equality $|S'| = p$ means that each odd element of $\mathbb{Z}_{2p}$, including $p$, belongs to $S$.
So, the elements of $S_2$ are odd.

Since $S = S' \cup S''$ holds, we obtain
$$k = |S| = |S'| + |S''| = p + c_0/2.$$

Since $S = S_2 \cup S_4 \cup \{p\}$ holds, we obtain
$$|S_2| = |S| - |S_4| - 1 = k - c_0 - 1 = p - c_0/2 - 1.$$

By Lemma \ref{ZeroAndElOfS2HaveOddNumOfComNei}(2), there are exactly $|S_2|$ vertices in $\Gamma$,
having $c_1$ common neighbours with $0$. Thus, there are exactly $2p - |S_2| - 1$ vertices, having
$c_0$ common neighbours with $0$.

Now let us count in two ways the number of paths of length $2$ with initial vertex is $0$.
On one hand, this number equals $k(k-1)$. On the other hand, each vertex that has $c_0$ (resp. $c_1$) common neighbours with $0$
gives $c_0$ (resp. $c_1$) such paths. So, there are exactly $|S_2|c_1 +(2p - |S_2| - 1)c_0$ paths of length 2.
We obtain the equality
$$k(k-1) = |S_2|c_1 +(2p - |S_2| - 1)c_0$$
or, equivalently,
$$(p + c_0/2)(p + c_0/2-1) = (p - c_0/2 - 1)c_1 +(2p - (p - c_0/2 - 1) - 1)c_0.$$

Let us make some calculations.

$$p^2 + pc_0 + c_0^2/4 - p - c_0/2 = (p - c_0/2 - 1)c_1 +(p + c_0/2)c_0,$$
$$p^2  - c_0^2/4 - p - c_0/2 = (p - c_0/2 - 1)c_1,$$
$$(p  - c_0/2)(p+c_0/2) - (p + c_0/2) = (p - c_0/2 - 1)c_1,$$
$$(p+c_0/2)(p  - c_0/2 - 1) = (p - c_0/2 - 1)c_1.$$

If $p  - c_0/2 - 1 = 0$ holds, then $c_0 = 2p - 2$ and, consequently, $S = \mathbb{Z}_{2p}\setminus\{0\}$ holds. A contradiction.
Thus, we have
$$c_1 = p + c_0/2 = k.$$

Now the relation defined by the rule ``to coincide or have $c_1 = k$ common neighbours'' is an equivalence relation.
So, the vertex set of $\Gamma$ can be divided into equinumerous equivalence classes of cardinality $|S_2| + 1 = p - c_0/2$.
Thus, $p - c_0/2$ divides $2p$. Since $p - c_0/2$ is odd, we conclude that $p - c_0/2 = p$ and, consequently, $c_0 = 0$ holds.
This means that $S_4$ is empty. So, $S$ consists of all odd elements, and $\Gamma$ is the complete bipartite graph. A contradiction.
$\square$

\medskip

It follows from Lemma \ref{ZeroAndElOfS2HaveOddNumOfComNei}(2) and Lemma \ref{S2IsEmpty} that the parameters
$a$,$b$ of $\Gamma$ are even.
So, there are two possible cases: either $p \in S$ and $b = |S_4| = k-1$ holds or $p \not\in S$
and $b = |S_4| = k$. In particular, the inequality $b \ge k-1$ holds.

Note that $S$ contains at least one odd vertex (otherwise, the graph $\Gamma$ is disconnected).

\begin{lemma}\label{OddVertInSHaveaComNeiWith0}
For any odd vertex $x \in S_4$, the equality $|N(0,x)| = a$ holds.
\end{lemma}
\proof
Suppose to the contrary that there exists an odd vertex $x \in S_4$ such that $|N(0,x)| = b$ holds.
Since $0$ and $x$ are adjacent, we have $k-1  \le b = |N(0,x)| \le |N(0)\setminus\{x\}| = k-1$. So, the equality $b = k-1$ holds.
This implies that the vertices $0$ and $p$ have $k-1$ common neighbours, so, $p$ belongs to $S$.

Consider a vertex $y$ that is adjacent to $0$,  $y \ne -x$. Since $|N(0,x)| = b = k - 1$ holds, in view of Lemma \ref{AutLemma}(3), we have
$|N(y,x+y)| = |N(\psi_{1,y}(0),  \psi_{1,y}(x))| = |N(0,x)| = b = k-1$.
Since $N(y)\setminus\{x+y\} = N(x+y)\setminus\{y\}$ holds and $0 \in N(y)\setminus\{x+y\}$,
we conclude that $x+y$ is adjacent to $0$. We then consider $x+y$ instead of $y$ and conclude that $2x+y$ is adjacent to $0$. Let us put $y:=x$ and apply induction to $y$. Since $x$ and $2p$ are coprime, the element $x$ generates $\mathbb{Z}_{2p}$, which gives that $0$ is adjacent to $x, 2x, 3x, \ldots, (2p-1)x$.
We obtain that each vertex of $\Gamma$ is adjacent to $0$.
This means that $\Gamma$ is a clique. A contradiction. $\square$

\begin{lemma}\label{OddVertInSecNeiOf0HaveaComNeiWith0}
For any odd vertex $x \in N_2(0)$, $x\ne p$, the equality $|N(0,x)| = a$ holds.
\end{lemma}
\proof
Suppose to the contrary that there exists an odd vertex $x \in N_2(x)$ such that $x\ne p$ and $|N(0,x)| = b$ hold.
In particular, the inequality $\beta > 1$ holds.

Let us consider the following two cases.

Let $b = k$ holds. Since $b$ is even, $k$ is also even, which means that $p$ does not belong to $S$. Now, the relation defined on the vertex set of $\Gamma$ by the rule
``to coincide or have $b$ common neighbours'' is an equivalence relation; the vertex set of $\Gamma$ can be divided into
equinumerous equivalence classes of cardinality $\beta + 1$. So, $\beta+1$ divides $2p$.
Note, that, for any vertex $y \in N_2(x)$ such that $y\ne p$ and $|N(0,y)| = b$, the equalities $|N(0,-y)| = |N(\psi_{-1,0}(0), \psi_{-1,0}(-y))|
= |N(0,y)| = b$ hold; so, the number of the such vertices $y$ is even. Since $|N(0,p)| = b$ holds, we conclude that $\beta$ is odd.
Thus, the fact that $\beta+1$ divides $2p$ implies $\beta + 1 = 2$ and, consequently, $\beta = 1$. A contradiction.

Let $b = k-1$ holds. Then $p$ belongs to $S$. Recall that $N(0)\setminus\{p\} = N(p)\setminus\{0\} = S_4$.
This means that any vertex $x\in V(\Gamma)\setminus\{0,p\}$ either is adjacent to $0$ and $p$ or is not adjacent to both of them.
Now, the relation defined on the vertex set of $\Gamma$ by the rule
``to coincide or have $b$ common neighbours'' is an equivalence relation; the vertex set of $\Gamma$ can be divided into
equinumerous equivalence classes of order $\beta + 1$. So, $\beta+1$ divides $2p$. Since $\beta$ is odd, we conclude that $\beta+1 = 2$
and, consequently,  $\beta = 1$ hold.
A contradiction. $\square$

\begin{lemma}\label{betaeq1}
For the graph $\Gamma$, $\beta = 1$ holds.
\end{lemma}
\proof
By Lemma \ref{OddVertInSHaveaComNeiWith0} and Lemma \ref{OddVertInSecNeiOf0HaveaComNeiWith0}, each odd vertex,
excepting $p$,
has $a$ common neighbours with $0$.
It is enough to prove that each non-zero even vertex has $a$ common neighbours with $0$.
Note that each non-zero even vertex can be represented as $x+p$, where $x$ is odd and $x\ne p$ holds.
Since $S_2$ is empty, the vertex $0$ is either adjacent to $x$ and $x+p$ or not adjacent to both of them.
Since the equality $N(x)\setminus\{x+p\} = N(x+p)\setminus\{x\}$ holds,
we conclude that $N(0,x)\setminus\{x+p\} = N(0,x+p)\setminus{x}$ holds.
Since $x$ is odd, this means that $|N(0,x+p)| = |N(0,x)| = a$. We have proved that all even vertices, excepting 0, have $a$ common neighbours with $0$.
Thus, in view of Lemma \ref{OddVertInSHaveaComNeiWith0} and Lemma \ref{OddVertInSecNeiOf0HaveaComNeiWith0}, the only vertex that has $b$ common neighbours with the vertex $0$ is the vertex $p$, and we obtain the equality $\beta = 1$.
$\square$

\medskip

Note that, in view of Lemma \ref{AutLemma}(3), for any $x \in V(\Gamma)$, we have $|N(x,x+p)| = |N(\psi_{1,x}(0),  \psi_{1,x}(p))| = |N(0,p)| = b$.
Since $S_2$ is empty, there are either
all possible edges between the sets $\{x_1,x_1+p\}$ and $\{x_2,x_2+p\}$ or no such edges,
for any two vertices $x_1,x_2 \in \mathbb{Z}_{2p}$, $x_1 \ne x_2+p$.

Denote by $V'$ the set $\{\{x,x+p\}~|~x \in \mathbb{Z}_{2p}\}$ and consider the graph $\Gamma'$ with the vertex set $V'$
and the following adjacency rule:
for any two elements $x_1,x_2 \in \mathbb{Z}_{2p}$, $x_1 \ne x_2+p$, the vertices $\{x_1,x_1+p\}$ and $\{x_2,x_2+p\}$ are adjacent in $\Gamma'$
if and only if there exists an edge between
the sets $\{x_1,x_1+p\}$ and $\{x_2,x_2+p\}$ in the graph $\Gamma$.

\begin{lemma}\label{GammaPrimeIsSRG}
The following statements hold.

{\rm (1)} If $p \in S$, then the graph $\Gamma'$ is strongly regular with parameters
$(n',k',\lambda',\mu') = (p, \frac{k-1}{2},\frac{a-2}{2}, \frac{a}{2})$;

{\rm (2)} If $p \not\in S$, then the graph $\Gamma'$ is strongly regular with parameters
$(n',k',\lambda',\mu') = (p, \frac{k}{2},\frac{a}{2}, \frac{a}{2})$.

\end{lemma}
\proof
(1) If $p \in S$, then $b = k - 1$ holds. Let us count the number of neighbours of a vertex $\{x,x+p\}\in V(\Gamma')$.
Note, that, for the vertex $x \in \Gamma$, the equality $N(x) = \{x+p\} \cup (x+S_4)$ holds.
By definition of $S_4$, any element $y \in x+S_4$ lies in $x+S_4$ with the element $y+p$;
each such a pair $\{y, y+p\}$ represents a vertex in the graph $\Gamma'$.
So, the vertex $\{x,x+p\}$ has $(k-1)/2$ neighbours in $\Gamma'$.

Let us consider an arbitrary pair of adjacent vertices $\{x,x+p\}$ and $\{y,y+p\}$ in the graph $\Gamma'$.
By Lemma \ref{betaeq1}, the vertices $x$ and $y$ have $a$ common neighbours in $\Gamma$.
Note, that the inclusion $\{x+p,y+p\} \subseteq N(x,y)$ holds. The elements of
$N(x,y)\setminus \{x+p,y+p\}$ can be divided into disjoint pairs with the difference $p$;
these pairs represent $|N(x,y)\setminus \{x+p,y+p\}|/2 = (a-2)/2$ common neighbours of the vertices
$\{x,x+p\}$ and $\{y,y+p\}$ in the graph $\Gamma'$.

Let us consider an arbitrary pair of non-adjacent vertices $\{x,x+p\}$ and $\{y,y+p\}$ in the graph $\Gamma'$.
By Lemma \ref{betaeq1}, the vertices $x$ and $y$ have $a$ common neighbours in $\Gamma$.
The elements of $N(x,y)$ can be divided into disjoint pairs with the difference $p$;
these pairs represent $|N(x,y)|/2 = a/2$ common neighbours of the vertices
$\{x,x+p\}$ and $\{y,y+p\}$ in the graph $\Gamma'$.

(2) If $p \not\in S$, then $b = k$ holds. Let us count the number of neighbours of a vertex $\{x,x+p\}\in V(\Gamma')$.
Note that, for the vertex $x \in \Gamma$, the equality $N(x) = x+S_4$ holds.
By definition of $S_4$, any element $y \in x+S_4$ lies in $x+S_4$ with the element $y+p$;
each such a pair $\{y, y+p\}$ represents a vertex in the graph $\Gamma'$.
So, the vertex $\{x,x+p\}$ has $k/2$ neighbours in $\Gamma'$.

Let us consider an arbitrary pair of distinct vertices $\{x,x+p\}$ and $\{y,y+p\}$ in the graph $\Gamma'$.
By Lemma \ref{betaeq1}, the vertices $x$ and $y$ have $a$ common neighbours in $\Gamma$.
The elements of $N(x,y)$ can be divided into disjoint pairs with the difference $p$;
these pairs represent $|N(x,y)|/2 = a/2$ common neighbours of the vertices
$\{x,x+p\}$ and $\{y,y+p\}$ in the graph $\Gamma'$. $\square$

\begin{lemma}\label{GammaPrimeIsACirculant}
The graph $\Gamma'$ is a circulant.
\end{lemma}
\proof
Let us consider $V'$ as the quotient group $\mathbb{Z}_{2p}/\{0,p\}$.
Note that the group $V'$ is isomorphic to $\mathbb{Z}_{p}$.
Denote by $\hat{S_4}$ the set $\{\{s,s+p\}~|~s \in S_4\}$. Since the equality $-S_4 = S_4$ holds in the group $\mathbb{Z}_{2p}$,
the equality $-\hat{S_4} = \hat{S_4}$ holds in the quotient group.
Since any vertices $\{x,x+p\}, \{y,y+p\} \in V'$ are adjacent in $\Gamma'$ if and only if
$\{x,x+p\} - \{y,y+p\} = \{x-y,x-y+p\} \in \hat{S_4}$,
we have $\Gamma' = Cay(V', \hat{S_4})$. $\square$

\medskip
Now we are ready to complete the proof of Theorem \ref{circ2p}.

By Lemma \ref{GammaPrimeIsSRG} and Lemma \ref{GammaPrimeIsACirculant}, the graph $\Gamma'$ is a strongly regular circulant.
By Theorem \ref{SRGCirculant}, the graph $\Gamma'$ is isomorphic to $P(p)$ and, in particular, $p \equiv 1(4)$ holds.
Thus, $k'$ is even and, consequently, $p$ belongs to $S$. We obtain, that $\Gamma$ is isomorphic to 2-clique-extension
of $P(p)$. $\square$
\end{proof}

\section*{Acknowledgment} \label{Ack}
The reported study was funded by RFBR according to the research project 20-51-53023.
 The work is partially supported by Mathematical Center in Akademgorodok, the
agreement with Ministry of Science and High Education of the Russian
Federation number 075-15-2019-1613.
The authors thank Alexander Gavrilyuk who took part in discussions concerning the result in Section 2.1.


\begin{thebibliography}{99}

\bibitem{A15} M. Ahanjideh, \emph{More on the subconstituents of symplectic graphs}, Boletim da Sociedade Paranaense de Matematica, Vol. 33,  2 (2015), 17--30.

\bibitem{AGHKKS20} S. Akbari, A. H. Ghodrati, M. A. Hosseinzadeh, V. V. Kabanov, E. V.
Konstantinova, L. V. Shalaginov, \emph{Spectra of Deza graphs}, Linear and Multilinear Algebra,
\url{https://doi.org/10.1080/03081087.2020.1723472}

\bibitem{AHHKKS21} S. Akbari, W. H. Haemers, M. A. Hosseinzadeh, V. V. Kabanov, E. V. Konstantinova, L. Shalaginov,
\emph{Spectra of strongly Deza graphs},
\url{https://arxiv.org/abs/2101.06877}


\bibitem{BPR20} R. Bildanov, V. Panshin, G. Ryabov, \emph{On WL-rank and WL-dimension of some Deza circulant graphs},
\url{https://arxiv.org/abs/2012.13898}

\bibitem{B06} A. E. Brouwer, \emph{Classification of small $(0,2)$-graphs}, Journal of Combinatorial Theory, Series A, Volume 113, Issue 8, (2006) 1636--1645.



\bibitem{BM85} A. E. Brouwer, D. M. Mesner, \emph{The connectivity of strongly regular graphs}, European Journal of Combinatorics, 6 (1985) 215--216.

\bibitem{BM97} A. E. Brouwer, H. M. Mulder, \emph{The vertex connectivity of a $\{0, 2\}$-graph equals its degree}, Discrete Math. 169 (1997) 153--155.

\bibitem{BK09} A. E. Brouwer, J. H. Koolen, \emph{The vertex-connectivity of a distance-regular graph}, European Journal of Combinatorics, 30 (3) (2009) 668--673.

\bibitem{CR21} D. Churikov, G. Ryabov, \emph{On WL-rank of Deza Cayley graphs},
\url{https://arxiv.org/abs/2105.11746}

\bibitem{BO09} A. E. Brouwer, P. R. J. \"{O}sterg{\aa}rd,  \emph{Classification of the (0, 2)-graphs of valency 8}, Discrete Mathematics, Volume 309, Issue 3, (2009) 532--547.

\bibitem{CH14}
D. Crnkovi\'c, W. H. Haemers, \emph{Walk-regular divisible design graphs}, Designs, Codes and Cryptography, volume 72, (2014) 165--175.

\bibitem{CKS20} D. Crnkovi\'c, H. Kharaghani, A. \v Svob,
Directed Deza graphs,
\url{https://arxiv.org/abs/2001.07281}

\bibitem{EFHHH99}
M. Erickson, S. Fernando, W. H. Haemers, D. Hardy, J. Hemmeter, \emph{Deza graphs: A generalization of strongly regular graphs}, Journal of Combinatorial Designs, 7, no. 6, 359--405 (1999).

\bibitem{GGHR92} A. D. Gardiner, C. D. Godsil, A. D. Hensel,  G. F. Royle, \emph{Second neighbourhoods of strongly regular graphs}, Discrete Mathematics 103, (1992) 161--170.

\bibitem{GGK14} A. L. Gavrilyuk, S. V. Goryainov, V. V. Kabanov, \emph{On the vertex connectivity of Deza graphs}, Proc. Steklov Inst. Math. 285
(Suppl 1) (2014) 68--77.

\bibitem{GHKS19} S. Goryainov, W. H. Haemers, V. Kabanov, L. Shalaginov, \emph{Deza graphs with parameters
$(n, k, k - 1, a)$ and $\beta = 1$}, Journal of Combinatorial Designs 27 (2019), 188--202.

\bibitem{GIKMS17} S. V. Goryainov, G. S. Isakova, V. V. Kabanov, N. V. Maslova, L. V. Shalaginov, \emph{On Deza graphs with disconnected second neighborhood of a vertex}, Proceedings of the Steklov Institute of Mathematics July 2017, Volume 297, Supplement 1, 97--107.

\bibitem{GP19} S. V. Goryainov, D. I. Panasenko, \emph{On vertex connectivity of Deza graphs with parameters of complements to Seidel graphs}, European Journal of Combinatorics, Volume 80, August 2019, pages 143--150.

\bibitem{GPS21} S. Goryainov, D. Panasenko, L. Shalaginov, \emph{Enumeration of strictly Deza graphs with at most 21 vertices},
\url{https://arxiv.org/abs/2102.10624}


\bibitem{GS11} S. V. Goryainov, L. V. Shalaginov, \emph{On Deza graphs with 14, 15 and 16 vertices}, Siberian Electronic Mathematical Reports, 8 (2011),  105--115. (in Russian)

\bibitem{GS13} S. V. Goryainov, L. V. Shalaginov, \emph{On Deza graphs with triangular and lattice graph complements as parameters}, J. Appl. Industr. Math. 7(3), (2013) 355--362.

\bibitem{GS14} S. V. Goryainov, L. V. Shalaginov, \emph{Cayley--Deza graphs with fewer than 60 vertices}, Siberian Electronic Mathematical Reports, 11 (2014),  268--310. (in Russian)

\bibitem{G13} Z. Gu, \emph{Subconstituents of symplectic graphs modulo $p^n$}, Linear Algebra and its Applications, 439 (2013) 1321--1329.

\bibitem{GWZ13} Z. Gu, Z. Wan, K. Zhou, \emph{Subconstituents of orthogonal graphs of odd characteristic~--~continued}, Linear Algebra and its Applications 439 (2013) 2861--2877.

\bibitem{GWL10} J. Guo, K. Wang, F. Li, \emph{Deza graphs based on symplectic spaces}, European Journal of Combinatorics, 31 (2010) 1969--1980.

\bibitem{HKM11} W. H. Haemers, H. Kharaghani, M. A. Meulenberg, \emph{Divisible design graphs}, Journal of Combinatorial Theory, Series A, Volume 118, Issue 3, (2011) 978--992.

\bibitem{KKS21} V. V.Kabanov, E. V. Konstantinova, L. Shalaginov,
\emph{Generalised dual Seidel switching and Deza graphs with strongly regular children},
Discrete Mathematics, Volume 344, Issue 3, March 2021, 112238.
\url{https://doi.org/10.1016/j.disc.2020.112238}

\bibitem{KMS19} V. Kabanov, N. V. Maslova, L. V. Shalaginov, \emph{On strictly Deza graphs with parameters
    $(n, k, k - 1, a)$}, European Journal of Combinatorics 80 (2019), 194--202.

\bibitem{KM14} V. V. Kabanov, A. V. Mityanina, \emph{Strictly Deza line graphs}, Proc. Steklov Inst. Math. (Suppl.), 285, suppl. 1 (2014), 78--90.

\bibitem{KM17} V. V. Kabanov, A. V. Mityanina, \emph{Claw-free strictly Deza graphs}, Siberian Electronic Mathematical Reports, 14 (2017),  367--387.

\bibitem{KS10} V. V. Kabanov and L. V. Shalaginov, \emph{On Deza graphs with parameters of lattice graphs}, Trudy Inst. Mat. Mekh. UrO RAN 16(3), 117--120 (2010). (in Russian)

\bibitem{KS19}
V. V. Kabanov,  L. Shalaginov, \emph{Deza graphs with parameters $(n, k, k - 2, a)$}, Journal of Combinatorial Designs, (2020) 1--12. \url{https://doi.org/10.1002/jcd.21722}.


\bibitem{KS20} V. V. Kabanov, L. V. Shalaginov, \emph{On divisible design Cayley graphs}, accepted to The Art of Discrete and Applied Mathematics, \url{https://doi.org/10.26493/2590-9770.1340.364}



\bibitem{LGW14} F. Li, J. Guo, K. Wang, \emph{Orthogonal graphs over Galois rings of odd characteristic}, European Journal of Combinatorics 39 (2014) 113--121.

\bibitem{LW08} F. Li, Y. Wang, \emph{Subconstituents of symplectic graphs},
European Journal of Combinatorics, 29 (2008) 1092--1103.

\bibitem{LWG13} F. Li, K. Wang, J. Guo, \emph{More on symplectic graphs modulo $p^n$}, Linear Algebra and its Applications, 438 (2013) 2651--2660.


\bibitem{MP13} Y. Meemark, T. Puirod, \emph{Symplectic graphs over finite local rings}, European Journal of Combinatorics, 34 (2013) 1114--1124.


\bibitem{MS15} {\v S}. Miklavi{\v c}, P. {\v S}parl, \emph{On extendability of Deza graphs with diameter 2}, Discrete Mathematics 338 (2015) 1416--1423.

\bibitem{M14} A. V. Mityanina, \emph{On $K_{1,3}$-free Deza graphs with diameter greater than 2}, Trudy Inst. Mat. i Mekh. UrO RAN, 20:2 (2014),  238--241. (in Russian)

\bibitem{M17} A. V. Mityanina, \emph{On $K_{1,3}$-free strictly Deza graphs}, Proc. Steklov Inst. Math. (Suppl.), 297, suppl. 1 (2017), 159--162.

\bibitem{M79}
M. Mulder, \emph{$(0,\lambda)$-graphs and $n$-cubes}, Discrete Mathematics, Volume 28, Issue 2, (1979) 179-188.

\bibitem{S11} L. V. Shalaginov, \emph{On Deza graphs with parameters of triangular graphs},
Trudy Inst. Mat. Mekh. UrO RAN 17(1), 294--298 (2011). (in Russian)

\bibitem{T20} L. Tsiovkina, \emph{A new construction of Deza graphs through $\pi$-local fusion graphs of finite simple groups of Lie-type of even characteristic}, \url{https://arxiv.org/abs/2009.11788}.

\bibitem{WF06} K. Wang, Y. Feng, \emph{Deza digraphs}, European Journal of Combinatorics, 27 (2006) 995--1004.

\bibitem{WF08} K. Wang, F. Li, \emph{Deza  digraphs II}, European Journal of Combinatorics,  29 (2008) 369--378.

\bibitem{Z19} S. S. Zaw, \emph{On strictly Deza graphs derived from the Berlekamp-van Lint-Seidel graph}, accepted to The Art of Discrete and Applied Mathematics, \url{https://doi.org/10.26493/2590-9770.1335.2fa}

\bibitem{ZW03} G. Zhang, K. Wang, \emph{A directed version of Deza graphs:  Deza digraphs}, Australasian  Journal of Combinatorics 28 (2003), 239--244.

\bibitem{Z12} N. D. Zyulyarkina, \emph{On the commutation graph of cyclic $TI$-subgroups in unitary groups}, Trudy Inst. Mat. i Mekh. UrO RAN, 18:3 (2012), 119--124. (in Russian)

\bibitem{BM79} W.G. Bridges, R.A. Mena
\emph{Rational circulants with rational spectra and cyclic strongly regular graphs},
Ars Combin., 8 (1979) 143--161.

\bibitem{C93} J. H. E. Cohn, \emph{The Diophantine equation $x^2+C = y^n$}, Acta Arith. 65 (1993)  367--381.

\bibitem{GR01} C. Godsil, G. Royle, \emph{Algebraic Graph Theory} (Springer-Verlag, New York, 2001).

\bibitem{G06} F. Goldberg, \emph{On quasi-strongly regular graphs}, Linear and Multilinear Algebra,
Volume 54, Issue 6, (2006) 437-451.

\bibitem{H84} W. H. Haemers, \emph{Dual seidel switching}, Papers dedicated to J. J. Seidel, P. J. de Doelder,
J de Graaf, and J. H. van Lint (Editors), EUT Report 84-WSK-03, Eindhoven University
of Technology, The Netherlands, 1984, pp. 183--190.

\bibitem{H82}
M. E. Harris,  \emph{Finite groups containing an intrinsic 2-component of Chevalley type over field of odd order},  Transactions of the American
Math. Soc., (1982) V. 272, no. 1. P. 1--65.

\bibitem{H84a} H. Hollmann, \emph{Pseudocyclic 3-class association schemes on 28 points}, Discrete Mathematics
Volume 52, Issues 2--3, (1984) 209--224.

\bibitem{HLW79} D.R. Hughes, J.V. van Lint, R.M. Wilson
Announcement at the Seventh British Combinatorial Conference Cambridge (1979) (unpublished).

\bibitem{LM05} Y. I. Leifman, M. E. Muzychuk, \emph{Strongly regular Cayley graphs over the group $\mathbb{Z}_{p^n} \times \mathbb{Z}_{p^n}$}, Discrete Mathematics, Volume 305, Issues 1--3, (2005) 219--239.

\bibitem{M84} S.L. Ma, \emph{Partial difference sets}
Discrete Mathematics, 52 (1984), 75--89.

\bibitem{MP03} {\v S}. Miklavi{\v c}, P. Poto{\v c}nik,
\emph{Distance-regular circulants}, European Journal of Combinatorics, Volume 24, Issue 7, (2003) 777--784.

\bibitem{MP07} {\v S}. Miklavi{\v c}, P. Poto{\v c}nik,
\emph{Distance-regular Cayley graphs on dihedral groups},
Journal of Combinatorial Theory, Series B
Volume 97, Issue 1, (2007) 14--33.

\bibitem{MS14} {\v S}. Miklavi{\v c}, P. {\v S}parl,
\emph{On distance-regular Cayley graphs on abelian groups}, Journal of Combinatorial Theory, Series B
Volume 108, (2014) 102--122.

\end{thebibliography}
\end{document}